\documentclass[11pt]{amsart}
\usepackage[T1]{fontenc}
\usepackage{lmodern}
\usepackage[margin=1in]{geometry}
\usepackage{amsmath,amssymb,amsthm,mathtools}
\usepackage{microtype}
\usepackage{xcolor}
\usepackage[colorlinks=true,linkcolor=blue!50!black,citecolor=blue!50!black,urlcolor=blue!50!black]{hyperref}
\hypersetup{hypertexnames=false}
\usepackage{enumitem}
\usepackage{needspace}
\setlist{topsep=4pt,itemsep=3pt}
\newtheorem{theorem}{Theorem}[section]
\newtheorem{lemma}[theorem]{Lemma}
\newtheorem{proposition}[theorem]{Proposition}
\newtheorem{corollary}[theorem]{Corollary}
\theoremstyle{definition}

\theoremstyle{remark}
\newtheorem{remark}[theorem]{Remark}

\DeclareMathOperator{\supp}{supp}

\newcommand{\Proj}{\operatorname{Proj}}
\newcommand{\Lam}{\Lambda_{\Phi}}

\newcommand{\Pset}{\mathcal P}

\newcommand{\eps}{\varepsilon}
\newcommand{\nphi}{\mathfrak n_{\Phi}}
\newcommand{\mphi}{\mathfrak m_{\Phi}}
\newcommand{\pphi}{\mathfrak p_{\Phi}}
\newcommand{\Chi}{\mathbf 1}
\newcommand{\Dom}{\operatorname{Dom}}
\newcommand{\HH}{H_{\Phi}}
\newcommand{\Aphi}{A_{\Phi}}

\title{Quantum integrals\\ 
(With an Appendix on the solution to Haagerup's weight problem) }
\author{David P. Blecher}
\address{Department of Mathematics, University of Houston, Houston, TX
77204-3008, USA}
\email[David P. Blecher]{dpbleche@central.uh.edu}

\author{Louis Labuschagne}
\address{Internal Box 209, School of Comp., Stat. \& Math. Sci., NWU, Pvt. Bag X6001, 2520 Potchefstroom, South
Africa}
\email{louis.labuschagne@nwu.ac.za}

\date{October 4, 2026}
\begin{document}

\begin{abstract}  We solve two long outstanding problems here. 
We first solve in ZFC a problem that the first author and Weaver posed ten years ago: 
 a state on a von Neumann algebra is countably 
additive on orthogonal projections if and only if it is sequentially 
weak$^*$ continuous, equivalently, sequentially normal.  
Such states may be considered the quantum (countably additive) probability measures, 
or rather their extension to a noncommutative integral. 
The principal 
tools are known ideas 
from direct integral theory from the 1970s.
Indeed we prove  a disintegration theorem for states on products of 
$\sigma$-finite von Neumann algebras. A decomposition adapted to a given sequence reduces the 
continuity assertion in our main result to the classical dominated convergence 
theorem. The argument needs no 
separability hypothesis. 
We give several  applications.  For example we give a new variant of Gleason theorem, extending countably additive probability measures on projections to sequentially weak* continuous states when the algebra has no type $I_2$ direct summand. 

We also establish normality and sequential normality criteria for weights.
For example we prove GNS closability under sequential normality on abelian subalgebras. Under a certain semifiniteness condition, we obtain representations of sequentially normal weights by sequentially weak* continuous positive functionals. We  give several other applications of our main result to weights.  In forthcoming work we consider many applications to `quantum measure and integration theory', such as variants of Lebesgue's dominated convergence theorem for von Neumann algebras. 
 The analytic tools and product-quotient construction developed here are used  to prove in an Appendix by the first author that normality and sequential normality of arbitrary weights are determined by their restrictions to abelian von Neumann subalgebras, resolving Haagerup’s Problem 1.11.  (This may be moved to  a companion paper.) That is, normality of weights is commutatively determined.  \end{abstract}

\maketitle

\section{Introduction and main theorem}\label{sec:intro}

Let $M$ be a complex von Neumann algebra, with its canonical predual
$M_*$. 
A state $\varphi$ on $M$ is \emph{countably additive
on projections} if
\[
 \varphi\left(\sum_{n=1}^{\infty}p_n\right) 
   =\sum_{n=1}^{\infty}\varphi(p_n)
\]
for every sequence of mutually orthogonal projections in $M$. 
It is \emph{sequentially normal}, or \emph{$\sigma$-normal}, if
$\varphi(a_n)\uparrow\varphi(a)$ whenever $0\leq a_n\uparrow a$ in
$M$. Normality of $\varphi$ of course  requires the analogous assertion for bounded increasing
nets, not just sequences.

In the spirit of Gleason's celebrated and influential theorem, states on a von  
Neumann algebra which are countably additive on (mutually orthogonal) projections would seem to play the role of 
`quantum measures'.    
Our paper is a contribution to the theory of noncommutative measures which is closer in spirit to classical
measure theory in the sense of being based on countable additivity rather than normality.
We refer the reader to 
 \cite{Hamhalter} for an excellent treatise on many aspects of quantum measure theory.
Indeed a state on a commutative  von  
Neumann algebra $M = L^\infty(X,\mu)$ which is countably additive on projections is precisely integration with respect to a probability measure $\sigma \ll \mu$
(this follows e.g.\ from the proof of \cite[Lemma 3.2]{BGL} together with the fact that simple functions are norm dense in $M$).   It is easy to see that the latter 
state is sequentially normal.   In fact it is also sequentially weak* continuous.   (This follows from our main result here, but we give an independent proof of it in Theorem \ref{thm:mainc} below.)

For a state on a $\sigma$-finite von Neumann algebra, countable additivity on
projections already implies normality (simply because it is equivalent to complete  additivity on
projections). The question addressed in the first part of our paper  is
what survives without $\sigma$-finiteness. Our main result is that
countable additivity still controls every weak$^*$ convergent sequence,
even when the sequence is neither positive nor monotone.

\begin{theorem}[Sequential continuity]\label{thm:main}
For a state $\varphi$ on a von Neumann algebra $M$, the following
are equivalent:
\begin{enumerate}[label=(\roman*)]
\item [{\rm (i)}] $\varphi$ is countably additive on  projections.
\item [{\rm (ii)}] $\varphi$ is sequentially normal.
\item [{\rm (iii)}] $\varphi$ is sequentially weak$^*$ continuous: if $a_n\to a$
weak$^*$ in $M$, then $\varphi(a_n)\to\varphi(a)$.
\end{enumerate}
In any faithful normal concrete representation of $M$, these conditions
are also equivalent to sequential continuity for the weak and strong operator
topologies.
\end{theorem}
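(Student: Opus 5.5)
The easy implications come first. For (iii)$\Rightarrow$(ii): if $0\le a_n\uparrow a$ then $a_n\to a$ strongly, hence weak$^*$, so $\varphi(a_n)\to\varphi(a)$. For (ii)$\Rightarrow$(i): the partial sums $\sum_{k\le n}p_k$ increase to $\sum_k p_k$. The statement about operator topologies in a faithful normal representation $M\subseteq B(H)$ follows the same pattern. A weak$^*$ convergent sequence is bounded by the uniform boundedness principle. On bounded sets the weak$^*$ and weak operator topologies agree, so weak$^*$ sequential continuity is equivalent to WOT sequential continuity. Strong operator sequential continuity is sandwiched between (ii) (monotone sequences converge strongly) and WOT sequential continuity. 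Strong convergent sequences are bounded, hence weak$^*$ convergent. So everything reduces to (i)$\Rightarrow$(iii).

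For (i)$\Rightarrow$(iii), fix $a_n\to a$ weak$^*$; replacing $a_n$ by $a_n-a$, we may take $a=0$, with $\sup_n\|a_n\|<\infty$. I will reduce to a $\sigma$-finite piece of $M$.

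Choose a maximal orthogonal family of $\sigma$-finite projections $\{e_i\}_{i\in I}$ with $\sum_i e_i=1$. These can be chosen to be central in the summands, i.e. $M\cong\bigoplus_i M_i$ up to a decomposition into $\sigma$-finite central summands, using the standard fact that every von Neumann algebra is a direct sum of algebras of the form $N\bar\otimes B(\ell^2(J))$ with $N$ $\sigma$-finite. Countable additivity is inherited by compressions, so $\varphi$ restricted to each $\sigma$-finite corner $pMp$ is countably additive. On a $\sigma$-finite algebra this is completely additive, hence normal. Thus $\varphi$ is normal on every $\sigma$-finite corner.

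The key step is a decomposition adapted to the sequence $(a_n)$. Let $z$ be the supremum of the (countably many) $\sigma$-finite projections $s(a_n)\vee s(a_n^*)$ together with their relevant supports. Then let $q$ be a countable supremum of $\sigma$-finite projections, so $q$ is itself $\sigma$-finite. Write
\[
a_n = q a_n q + (1-q)a_n q + q a_n(1-q) + (1-q)a_n(1-q).
\]
The term $q a_n q\to 0$ weak$^*$ in the $\sigma$-finite corner $qMq$, where $\varphi$ is normal, so $\varphi(qa_nq)\to0$. The remaining terms cannot simply be discarded, since $a_n$ need not be supported under a $\sigma$-finite projection (e.g. $a_n=1$ type pieces). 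Here I would invoke the disintegration theorem for states on products of $\sigma$-finite algebras that the paper promises. The algebra generated by $(a_n)$ sits in a product $\prod_i M_i$ of $\sigma$-finite algebras (for instance over a countably generated subalgebra, refined by a maximal family). On such a product, $\varphi$ should be represented as $\varphi(x)=\int_\Omega \omega_t(x_t)\,d\mu(t)$, where the $\omega_t$ are normal states on the factors and $\mu$ is a countably additive probability measure on a suitable $\sigma$-algebra of index sets. $\mu$ is countably additive precisely by (i) applied to central projections. Then $\varphi(a_n)=\int \omega_t((a_n)_t)\,d\mu(t)$. The integrands tend to $0$ pointwise by normality of $\omega_t$, and are bounded by $\sup\|a_n\|$, so dominated convergence gives $\varphi(a_n)\to0$.

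The main obstacle is this disintegration step. It requires constructing $\mu$ and the measurable field $t\mapsto\omega_t$ without separability, and verifying that $t\mapsto\omega_t((a_n)_t)$ is measurable for the countably many $a_n$ at hand. My plan for it is:
\begin{enumerate}
\item Restrict to the countably generated data of the sequence.
\item Use (i) to show that the set function on central projections of the product (indexed by subsets) is a countably additive measure on the generated $\sigma$-algebra.
\item Obtain $\omega_t$ via Radon--Nikodym derivatives of $E\mapsto\varphi(\chi_E x)$ with respect to $\mu$, one element $x$ at a time for the countable family.
\end{enumerate}
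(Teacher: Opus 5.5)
Your outline has the same architecture as the paper: pass to $M_0=W^*(1,a_1,a_2,\dots)$, write $M_0\cong\bigoplus_{t\in I}M_t$ with $\sigma$-finite (separably acting) summands, disintegrate $\varphi$ over $\mu(S)=\varphi(\sum_{t\in S}z_t)$, and finish by dominated convergence. Your easy implications and the operator-topology remarks are fine.

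Your preliminary attempts are dead ends, as you partly noticed. A general $M$ need not split into $\sigma$-finite central summands ($B(\ell^2(J))$ with $J$ uncountable is a factor that is not $\sigma$-finite). Also $s(a_n)$ need not be $\sigma$-finite. The reduction to $M_0$ is sound; the paper quotes it as Lemma~\ref{lem:countable-reduction}, and a Zorn argument over central projections $c$ with $cM_0$ separably acting also works. You need $M_0$ to \emph{be} the full product of bounded fields, not merely sit inside one, so that $\varphi$ is defined on every bounded field.

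The genuine gap is the disintegration itself, which is the entire content of (i)$\Rightarrow$(iii). Your plan for it would not give what dominated convergence needs. Take Radon--Nikodym derivatives $f_x$ of $E\mapsto\varphi(\mathbf 1_E x)$ for the countable $\mathbb Q(i)$-$*$-algebra generated by $1$ and the $a_n$. After discarding countably many null sets, this does give states $\omega_t$ on the separable $C^*$-algebras $C^*(1,(a_n)_t)\subseteq M_t$. That works only if $\mu$ lives on all of $\mathcal P(I)$; over a coarser generated $\sigma$-algebra the derivatives are not even fiberwise.

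But nothing forces $\omega_t$ to be the restriction of a \emph{normal} state on $M_t$. Without that, the pointwise convergence $\omega_t((a_n)_t)\to0$ fails in general. For example, point evaluation at $1/2$ on $C[0,1]\subseteq L^\infty[0,1]$ does not see the weak$^*$ convergence to $0$ of narrowing height-one tent functions. Fiberwise normality has to come from (i) applied to non-central projections. The natural way is to use orthogonal sequences of projection fields $(p^{(k)}_t)_t$ chosen independently at each $t$. Those are uncountably many elements of $N$, and a one-element-at-a-time construction cannot handle them, because each element brings its own exceptional null set.

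The missing idea in the paper makes this global. First, the quotient $A_\mu=N/J_\mu$ by the $\mu$-null fields is itself a $\sigma$-finite von Neumann algebra. Lemma~\ref{lem:quotient} proves this via coordinatewise monotone suprema, the faithful state $\int\theta_t\,d\mu$, and Lemma~\ref{lem:selection}. Second, the induced state is countably additive on projections of $A_\mu$, since orthogonal sequences lift coordinatewise. It is therefore normal by Lemma~\ref{lem:sigma}. Third, $(A_\mu)_*$ is the space of integrable fields of normal functionals, by Lemma~\ref{lem:predual}.

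Together these produce normal $\psi_t$ with $\varphi(x)=\int_I\psi_t(x_t)\,d\mu(t)$ for every $x\in N$ at once. Because $\mu$ is defined on $\mathcal P(I)$, every integrand is automatically measurable, so the measurability concern in your plan does not arise.
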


Thus sequential weak$^*$ continuity of states  is commutatively determined.   Indeed a positive linear functional $\varphi$ on $M$ is weak* sequentially continuous or  sequentially  
normal on $M$ if and only if  $\varphi|_A$ has these properties for every unital abelian von Neumann
      subalgebra $A\subseteq M$. 
We now have several quite different proofs of this result, however the one below is the shortest.
(We do have shorter, or quite interesting, proofs of special cases of it, or of complementary results such as Corollary
\ref{cor} below.)  
As a first application we give a new variant of Gleason's theorem, describing the countably additive
projection measures for any  von Neumann algebra with
no type $I_2$ direct summand. 

We then study weights, obtaining direct normality and sequential
normality criteria under additional hypotheses, including certain variants of semifiniteness 
of the weight.  We also develop
normal-minorant constructions and GNS closability methods. These
arguments are proved independently here and provide ingredients for
the solution in the Appendix of Haagerup's problem 1.11 from \cite{Haag} and its sequential variant.

The most nontrivial implication in Theorem \ref{thm:main} 
is (i)$\Rightarrow$(iii). Its proof uses
two reductions. First, we disintegrate a countably additive state on a
product of $\sigma$-finite von Neumann algebras into normal states on
the factors. The measure is determined by the state on the central
indicator projections, and it is defined on the entire power set of the
index set. Second, we replace $M$ by the von Neumann algebra generated by the
sequence. This algebra is a product of at most a continuum of separably
acting, hence $\sigma$-finite, von Neumann algebras. Normality
in each factor and the scalar dominated convergence theorem then prove the desired
continuity.
The decomposition may depend on the sequence, but this
 is harmless for us since we have fixed the sequence.

In \cite{BlecherWeaver} we study countably additive states
and their relation to measurable cardinals.  Theorem~4.6 of   \cite{BlecherWeaver}  proves
sequential weak$^*$ continuity for countably additive pure states on
arbitrary von Neumann algebras. However Theorem~\ref{thm:main} does not assume
purity. For background on quantum measures and Gleason extension
theorems, see Hamhalter's monograph
\cite[pp.~59--93 and 121--175]{Hamhalter}. The 
product construction is also related to the nonseparable direct-integral 
theory of Wils and Vesterstr{\o}m--Wils \cite{Wils,VW} and to 
Halpern's integral representations and decompositions 
\cite{Halpern1966,Halpern1972,Halpern1978}. Here we give the quotient 
and predual arguments explicitly, allowing the $\sigma$-finite 
fibers to vary. A crucial feature is that the original state is assumed 
to only be countably additive on projections, not normal. 

In Section~\ref{class} we give a short proof  of the commutative case of the main theorem, which in this setting becomes a seemingly new result in classical integration theory.  
We refer the reader to \cite{BGL}
for a description of several aspects of classical measure and integration theory from a von Neumann algebra perspective. 
Section~\ref{sec:prelim} contains the operator-algebraic preliminaries.
Section~\ref{sec:product} proves product disintegration, including
almost-everywhere uniqueness of the field of normal states.
Section~\ref{sec:continuity} proves Theorem~\ref{thm:main}, and combines this theorem with the 
Bunce--Wright extension theorem to obtain a sequentially continuous 
Gleason extension. 

Section~6 develops the weight theory and the analytic tools which we will also use in the sequel 
\cite{Bd}. That paper proves, for every weight $\Phi$ on an
arbitrary von Neumann algebra,
\[
 \Phi\text{ is normal}\ \Longleftrightarrow\ (H),
 \qquad
 \Phi\text{ is }\sigma\text{-normal}
       \ \Longleftrightarrow\ (H_\sigma),
\]
where $(H)$ and $(H_\sigma)$ mean normality and sequential normality,
respectively, on every unital abelian von Neumann subalgebra. (The sequential version may be moved to the present paper.) 
In
addition to the GNS and minorant arguments of Section~6, the sequential
proof uses the quotient construction of Lemma~\ref{lem:quotient} and the countably
generated reduction of Lemma~\ref{lem:countable-reduction}. 
  (We remark that Haagerup does characterize normality of weights there in terms of  `complete additivity', but this additivity is on positive elements, not projections). 
  Recently I.\ Choi and coauthors \cite{IC1,IC2} have solved two other problems from  \cite{Haag}, Problems 2.7 and 1.10 there, but we have not yet been able to link our work to theirs.     Unfortunately it seems that there are significant obstacles  to any attempt to extend the disintegration/integral representation theorem which was key to our solution to Theorem \ref{thm:main} to weights.   Nonetheless we shall still give several applications of Theorem \ref{thm:main} to weights. 
  In Section 7 we discuss some further special cases. 
  Section 8 is a closing summary.

In forthcoming work we consider many applications, such as variants of Lebesgue's dominated convergence theorem (LDCT) for von Neumann algebras \cite{BNcm}.    
Note that the classical dominated convergence theorem LDCT is a statement about sequential weak* convergence, namely  it maybe rephrased as 
 $\psi_g(f_n/g) \to \psi_g(f/g)$ where $\psi_g$ is the weak* continuous functional on $L^\infty(\mu)$ associated to the majorant $g \in L^1(\mu)$ of the sequence $(f_n)$.  
Indeed LDCT is equivalent to the assertion 
that every norm-bounded sequence $(f_n)$ in $M=L^\infty(X,\mu)$ which 
converges a.e.\ also converges weak*.  Or equivalently that $\psi(f_n) \to \psi(f)$ for every state $\psi$ of the kind characterized in Theorem \ref{thm:main}. 
In  \cite{BNcm} we also discuss the relations between sequential weak* convergence and other notions of sequential convergence such as strong or (local) almost uniform sequential  convergence. 

\begin{corollary}[Monotone convergence]\label{cor} 
For a positive linear functional $\phi$ on a von Neumann algebra 
$M$, countable additivity on projections is equivalent to 
$\sigma$-normality. 
\end{corollary}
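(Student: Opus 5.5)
The plan is to deduce the corollary from Theorem~\ref{thm:main} by normalizing. If $\phi=0$ there is nothing to prove, and otherwise $\phi(1)=\|\phi\|>0$. Set $\varphi=\phi/\phi(1)$, which is a state. Both properties in question are invariant under multiplication by a positive scalar: countable additivity on projections and $\sigma$-normality are each linear conditions on the values of $\phi$. So $\phi$ has either property if and only if $\varphi$ does. The equivalence (i)$\Leftrightarrow$(ii) of Theorem~\ref{thm:main}, applied to $\varphi$, then gives the result.

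For completeness, I would also record the easy direction (ii)$\Rightarrow$(i) directly, since it needs nothing deep. Let $(p_n)$ be mutually orthogonal projections. The partial sums $q_N=\sum_{n\le N}p_n$ increase to $p=\sum_n p_n$, where the sum is taken in the strong (equivalently weak$^*$) topology and $p$ is the supremum. By $\sigma$-normality, $\phi(q_N)\uparrow\phi(p)$, and $\phi(q_N)=\sum_{n\le N}\phi(p_n)$ by finite additivity (linearity). Hence $\phi(p)=\sum_n\phi(p_n)$.

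The substantive direction is (i)$\Rightarrow$(ii), and this is exactly where the main theorem is used. The route there is (i)$\Rightarrow$(iii)$\Rightarrow$(ii). The step (iii)$\Rightarrow$(ii) holds because a bounded increasing sequence $0\le a_n\uparrow a$ converges to its supremum $a$ strongly, hence weak$^*$, by Vigier's theorem. There is no real obstacle at the level of the corollary. All of the difficulty lies in (i)$\Rightarrow$(iii) of Theorem~\ref{thm:main}, which we take as given: the product disintegration plus the reduction to the von Neumann algebra generated by the sequence.
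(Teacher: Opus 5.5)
Your proposal is correct and follows the paper's argument: the paper deduces the corollary immediately from Theorem~\ref{thm:main}, and normalizing to a state is the implicit step, consistent with the remark after that proof that the equivalences hold for bounded positive functionals. The paper also sketches an independent route (the abelian case via spectral projections, the finite case via the center-valued trace, and a factorization trick in the properly infinite case), but only as an alternative.
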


\begin{proof} 
We stated this theorem in \cite{B18} without proof.   It follows immediately from Theorem~\ref{thm:main}.
We sketch an alternative proof that we will present in detail elsewhere.  
The abelian case is easy.   Here is one proof (another
uses 
the classical LMCT): if $f_n \downarrow 0$ in Ball$(M)_+$ and $t > 0$ 
let $e_n$ be the characteristic function $[f_n \geq t ]$.  Then $e_n \downarrow e \in {\rm Proj}(M)$.
Since $t e  \leq t e_n  \leq f_n \downarrow 0$ we have $e = 0$.  So $\phi(e_n) \downarrow 0$.  Since $f_n \leq t + e_n$
we have $\phi(f_n) \leq t \phi(1) + \phi(e_n)$, so that $0 \leq \limsup_n \phi(f_n) \leq t \phi(1)$.  Since $t>0$ was arbitrary we have
$\phi(f_n) \to 0$ as desired. 

It suffices to prove the result for finite and for properly infinite von Neumann algebras.
The former can be easily deduced from the abelian case above using the center valued trace.   The latter 
follows by the increasing sequence of projections case, using a special factorization property of increasing sequences in   properly infinite algebras.  \end{proof}  

\begin{corollary} \label{cor2} Weak* sequential continuity and $\sigma$-normality of states on a  von Neumann algebra $M$ are commutatively determined.   Indeed a positive linear functional $\varphi$ on $M$ is weak* sequentially continuous or $\sigma$-normal on $M$ if and only if  $\varphi|_A$ has these properties for every unital abelian von Neumann
      subalgebra $A\subseteq M$.
 \end{corollary}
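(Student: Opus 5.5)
The plan is to reduce everything to Theorem~\ref{thm:main} and Corollary~\ref{cor}, through the observation that countable additivity on projections is itself a commutative condition. The forward direction is trivial. If $\varphi$ is weak$^*$ sequentially continuous (respectively $\sigma$-normal) on $M$, its restriction to any von Neumann subalgebra $A$ keeps the property. This holds because the inclusion $A\subseteq M$ is weak$^*$ continuous, so a weak$^*$ convergent sequence in $A$ is weak$^*$ convergent in $M$. Likewise an increasing bounded sequence $0\le a_n\uparrow a$ in $A$ has the same supremum $a$ in $M$, since suprema of bounded increasing nets are strong limits and $A$ is strongly closed.

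For the converse, suppose $\varphi|_A$ is $\sigma$-normal, or weak$^*$ sequentially continuous, for every unital abelian von Neumann subalgebra $A$. Weak$^*$ sequential continuity implies $\sigma$-normality, because $0\le a_n\uparrow a$ implies $a_n\to a$ weak$^*$. So in either case we may assume each $\varphi|_A$ is $\sigma$-normal. I then show that $\varphi$ is countably additive on projections of $M$. Let $(p_n)$ be mutually orthogonal projections in $M$. They commute, so they generate (together with $1$) a unital abelian von Neumann subalgebra $A\subseteq M$. The partial sums $q_N=\sum_{n\le N}p_n$ increase to $p=\sum_n p_n$, and this sum is the same whether computed in $A$ or in $M$, by the strong closure remark above. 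Applying $\sigma$-normality of $\varphi|_A$ gives $\varphi(q_N)\uparrow\varphi(p)$, that is, $\varphi(p)=\sum_n\varphi(p_n)$.

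Now apply Theorem~\ref{thm:main}, rescaled from states to positive functionals: if $\varphi\neq 0$, replace it by $\varphi/\varphi(1)$, and the case $\varphi=0$ is trivial. Countable additivity on projections then yields both $\sigma$-normality and weak$^*$ sequential continuity of $\varphi$ on all of $M$. For the $\sigma$-normal statement alone, Corollary~\ref{cor} already suffices.

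The only point needing care, which I regard as the main (mild) obstacle, is that the abelian subalgebra and the ambient algebra compute the same suprema and the same weak$^*$ limits. This is exactly what lets a commutative hypothesis be transported to $M$, and it rests on von Neumann subalgebras being weak$^*$ and strongly closed. All the genuine difficulty is absorbed into Theorem~\ref{thm:main}.
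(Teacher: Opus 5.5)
Your proof is correct and takes the same route as the paper: the paper likewise observes that countable additivity on projections is detected inside the unital abelian algebra $W^*(1,p_1,p_2,\ldots)$, and then invokes Theorem~\ref{thm:main}, which extends to positive functionals by rescaling as in the remark after its proof. Your treatment of the forward direction, and your check that suprema and weak$^*$ limits agree in $A$ and $M$, only make explicit what the paper's one-line proof leaves implicit.
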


\begin{proof}  Being countably additive on projections is commutatively determined.   Indeed,
if $(p_n)$ are mutually orthogonal projections in $M$ then
the unital von Neumann subalgebra $A=W^*(1,p_1,p_2,\ldots)$ is abelian. \end{proof}

\section{A new result in classical integration theory?}\label{class}

In this section we give a short proof  of the commutative case of the main theorem, which in this setting, and using a fact from the second paragraph of our paper,
 becomes a seemingly new result in classical integration theory (it  was mentioned in \cite{B18}, but we are not
aware of this result proved rigorously in the literature in this generality).

\begin{theorem}\label{thm:mainc}
Let $(X,\Sigma,\mu)$ be a semifinite measure space, and let
$\nu$ be a finite positive measure on $(X,\Sigma)$ with
$\nu\ll\mu$ (we remark that the latter  need not imply 
that there is a $g \in L^1$ with $d \nu = g d \mu$). Suppose that $(f_n)\subseteq L^\infty(\mu)$ and
\[
 \int_X f_nh\,d\mu\longrightarrow0
 \qquad\text{for every }h\in L^1(\mu).
\]
Then
\[
 \int_X f_n\,d\nu\longrightarrow0.
\]
The latter integrals are well defined on $L^\infty(\mu)$
because $\nu\ll\mu$.
\end{theorem}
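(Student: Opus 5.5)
The plan is to split $\nu$ into a part with an $L^1(\mu)$-density, where the hypothesis applies directly, and a singular remainder that vanishes on every set of finite $\mu$-measure. The remainder is the real content of the theorem.

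\emph{Preliminary reduction.} Since $f_n\to 0$ weak$^*$ against $L^1(\mu)$, the uniform boundedness principle gives $C=\sup_n\|f_n\|_\infty<\infty$. Replacing $f_n$ by its real and imaginary parts, we may assume the $f_n$ are real-valued.

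\emph{Step 1 (the $\sigma$-finite part).} Put
\[
s=\sup\{\nu(F): F\in\Sigma,\ \mu(F)<\infty\}\le\nu(X)<\infty,
\]
choose an increasing sequence $F_k$ of sets of finite $\mu$-measure with $\nu(F_k)\uparrow s$, and let $E=\bigcup_k F_k$. Then $\mu|_E$ is $\sigma$-finite and $\nu|_E\ll\mu|_E$. The classical Radon--Nikodym theorem gives $g\in L^1(\mu)$, $g\ge 0$, supported in $E$, with $d\nu|_E=g\,d\mu$. Hence
\[
\int_E f_n\,d\nu=\int_X f_n g\,d\mu\to 0
\]
directly from the hypothesis.

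The remainder $\nu'=\nu|_{X\setminus E}$ satisfies $\nu'(F)=0$ whenever $\mu(F)<\infty$. Otherwise $\nu(F_k\cup F)$ would exceed $s$ for large $k$. So it remains to show $\int f_n\,d\nu'\to 0$ for a finite measure $\nu'\ll\mu$ that is null on every set of finite $\mu$-measure.

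\emph{Step 2 (pointwise information from the hypothesis).} Only countably many functions are involved. Let $\Sigma_0\subseteq\Sigma$ be the countably generated $\sigma$-algebra generated by the $f_n$, realised via $\Phi:X\to[-C,C]^{\mathbb N}$, $\Phi(x)=(f_n(x))_n$. Each $f_n$ is constant on every fibre (atom) of $\Sigma_0$.

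If an atom $A$ has $\mu(A)>0$, semifiniteness yields $F\subseteq A$ with $0<\mu(F)<\infty$. Testing against $h=\mathbf 1_F/\mu(F)$ shows that the constant value of $f_n$ on $A$ tends to $0$. Similarly, testing against $\mathbf 1_F/\mu(F)$ for finite-measure $F$ inside $\Sigma_0$-measurable sets shows that averages of $f_n$ over any $F$ of positive finite measure go to $0$. The countable-cocountable example shows this kind of information is exactly what controls $\nu'$.

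\emph{Step 3 (the singular part: main obstacle).} A naive dominated-convergence argument fails, because weak$^*$ null sequences need not converge a.e. (Rademacher functions). What I will try first is to push everything forward to the Polish space $K=[-C,C]^{\mathbb N}$, which is compact and metrizable:
\begin{itemize}
\item $\int f_n\,d\nu'=\int_K y_n\,d(\Phi_*\nu')(y)$;
\item the finite Borel measure $\Phi_*\nu'$ is Radon on $K$.
\end{itemize}
The goal is to show that $\Phi_*\nu'$ is absolutely continuous with respect to a countable combination $\rho=\sum_j 2^{-j}\Phi_*(\mu|_{G_j}/\mu(G_j))$, with $0<\mu(G_j)<\infty$. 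This uses $\nu'\ll\mu$, semifiniteness, and the separability of $K$: pick $G_j$ to exhaust, in the $\Sigma_0$-sense, the $\rho$-nonnull parts of a countable base.

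Once $\Phi_*\nu'=k\,d\rho$ with $k\in L^1(\rho)$, I will approximate $k$ in $L^1(\rho)$ by finite combinations of indicators of Borel sets $B\subseteq K$. Each term $\int_B y_n\,d\Phi_*(\mu|_{G_j})=\int_{G_j\cap\Phi^{-1}(B)}f_n\,d\mu$ tends to $0$ by the hypothesis with $h=\mathbf 1_{G_j\cap\Phi^{-1}(B)}\in L^1(\mu)$. The uniform bound $C$ then yields $\int f_n\,d\nu'\to 0$.

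The delicate point is the absolute continuity of $\Phi_*\nu'$ with respect to such a $\rho$. The difficulty is that $\nu'$ kills all finite-measure sets, so the domination must come from $\Sigma_0$-saturated sets, using that $\nu'$ of a $\Sigma_0$-set of positive $\nu'$-measure forces positive $\mu$-measure for some finite-measure subset with the same $\Phi$-image profile. If this fails, the fallback is to regard $f\mapsto\int f\,d\nu$ as a state on $L^\infty(\mu)$ that is countably additive on projections, and to invoke Theorem~\ref{thm:main}.
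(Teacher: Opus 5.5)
Your preliminary reduction and Step~1 are fine (the uniform boundedness step tacitly uses that $L^1(\mu)$ norms $L^\infty(\mu)$, which is where semifiniteness first enters). The gap is in Step~3. Your claim is that $\Phi_*\nu'$ is absolutely continuous with respect to some $\rho=\sum_j 2^{-j}\Phi_*(\mu|_{G_j})/\mu(G_j)$ with $0<\mu(G_j)<\infty$. This is false in general.

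Take $X=[0,1]$ with its Borel sets, $\mu$ counting measure (semifinite, and only $\emptyset$ is $\mu$-null), $\nu$ Lebesgue measure, and $f_n(x)=x/n$. Here $L^1(\mu)$ consists of countably supported summable functions, so the hypothesis holds. Also $s=0$, so $\nu'=\nu$. Sets of finite $\mu$-measure are finite, so every admissible $\rho$ is carried by the countable set $D=\Phi(\bigcup_jG_j)$. Since $\Phi(x)=(x/n)_n$ is injective, $\Phi_*\nu'(D)=\nu(\bigcup_jG_j)=0$. Hence $\Phi_*\nu'$ and $\rho$ are mutually singular for every choice of the $G_j$.

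The underlying problem is that $\nu'$ annihilates every $G_j$. Domination after pushing forward could only come from $\Phi$ collapsing points, and nothing forces that. Knowing that each $\Phi_*\nu'$-positive Borel set is charged by \emph{some} $\Phi_*(\mu|_F)$ does not produce a countable dominating family; compare Lebesgue measure with Dirac masses.

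Your fallback does not close the gap either. Theorem~\ref{thm:main} is about von Neumann algebras. For merely semifinite $\mu$, the algebra $L^\infty(\mu)$ need not be monotone complete, since its projection lattice is the measure algebra. Moreover $L^\infty(\mu)=L^1(\mu)^*$ holds only in the localizable case. In any event, the paper intends this theorem as an independent proof of the commutative case.

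What is missing is a way to turn weak$^*$-nullity into pointwise information that every subset inherits. The paper argues by contradiction.
\begin{enumerate}
\item If $\int f_n\,d\nu\ge\varepsilon$ along a subsequence, boundedness in $L^2(\nu)$ gives a further subsequence converging weakly in $L^2(\nu)$ to some real $g$ with $\int g\,d\nu\ge\varepsilon$.
\item Mazur's lemma yields convex combinations $v_k$ of the tails $\{f_n:n\ge k\}$ with $v_k\to g$ in $L^2(\nu)$, hence $\nu$-a.e.\ along a subsequence. The $v_k$ remain $\sigma(L^\infty(\mu),L^1(\mu))$-null because they are built from tails.
\item Choose $\delta>0$ with $\nu(\{g>2\delta\})>0$. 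Then some $E_m=\bigcap_{k\ge m}\{v_k>\delta\}$ has $\nu(E_m)>0$, hence $\mu(E_m)>0$. Semifiniteness gives $F\subseteq E_m$ with $0<\mu(F)<\infty$.
\item Then $\int_Fv_k\,d\mu\ge\delta\mu(F)$ for all $k\ge m$, contradicting $\int_Fv_k\,d\mu\to0$.
\end{enumerate}
The property ``$v_k>\delta$ on $E_m$'' passes to every finite-measure subset, which is exactly what your pushforward domination cannot supply. Since this argument treats all of $\nu$ at once, the Radon--Nikodym split of Step~1 also becomes unnecessary.
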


\begin{proof} It suffices to treat real-valued functions, since taking real
and imaginary parts preserves the assumed convergence.
Let $(f_n)\subseteq L^\infty(\mu)$ be a real-valued sequence
satisfying the stated conditions.

Semifiniteness implies that $L^1(\mu)$ norms $L^\infty(\mu)$:
\[
 \|f\|_\infty
 =\sup_{\|h\|_1\le1}\left|\int_X fh\,d\mu\right|.
\]
The bounded linear functionals
\[
 T_n:L^1(\mu)\longrightarrow\mathbb C,
 \qquad T_n(h)=\int_X f_nh\,d\mu,
\]
converge pointwise to zero. The uniform boundedness principle,
together with the norming identity 
above,  therefore gives
\[
 C:=\sup_n\|f_n\|_\infty
   =\sup_n\|T_n\|<\infty.
\]

Choose real-valued measurable representatives satisfying
$|f_n|\le C$ everywhere, and keep these representatives fixed.
If the conclusion fails, then, after passing to a subsequence
and replacing all its terms by their negatives if necessary,
we may assume that, for some $\varepsilon>0$,
\[
 \int_X f_n\,d\nu\ge\varepsilon
 \qquad(n\ge1).
\]
Since $\nu$ is finite, $(f_n)$ is bounded in $L^2(\nu)$.
Weak sequential compactness in this Hilbert space yields 
a further subsequence, relabeled $(f_n)$, such that
$f_n\to g$ weakly in $L^2(\nu)$. 
The limit is real-valued. Integrating against $1\in L^2(\nu)$
gives
\[
 \int_X g\,d\nu
 =\lim_n\int_X f_n\,d\nu\ge\varepsilon.
\]

By Mazur's lemma, for each $k$ we may choose a finite convex
combination
\[
 v_k\in\operatorname{co}\{f_n:n\ge k\},
 \qquad
 \|v_k-g\|_{L^2(\nu)}\le 2^{-k}.
\]
We realize each $v_k$ as a measurable function by taking the
corresponding convex combination of the fixed representatives
of the $f_n$, and choose a measurable representative of $g$.
Since convergence in $L^2(\nu)$ implies an a.e.\ 
convergent subsequence, we may pass to a subsequence and
relabel so that $v_k\to g$ $\nu$-a.e..
Both properties in the preceding displayed equation remain valid:
the $k$th retained term has original index at least $k$.
The tail condition ensures that $v_k\to0$ in
$\sigma(L^\infty(\mu),L^1(\mu))$. Indeed, for every
$h\in L^1(\mu)$,
\[
 \left|\int_X v_kh\,d\mu\right|
 \le \sup_{n\ge k}\left|\int_X f_nh\,d\mu\right|
 \longrightarrow0.
\]
(Alternatively, the Hilbert space Banach--Saks theorem gives
arithmetic means of a subsequence converging to $g$ in
$L^2(\nu)$.  These means also converge to zero in
$\sigma(L^\infty(\mu),L^1(\mu))$ by scalar Ces\`aro convergence, and one then passes to an a.e.\ 
convergent subsequence as above.)
 
Since $\int_X g\,d\nu>0$, choose $\delta>0$ such that
$\nu(\{g>2\delta\})>0$. Put
\[
 E_m=\bigcap_{k\ge m}\{v_k>\delta\}.
\]
At $\nu$-almost every point where $g>2\delta$, convergence
$v_k\to g$ implies $v_k>\delta$ eventually. Hence
\[
 \nu\!\left(\bigcup_{m=1}^{\infty}E_m\right)>0,
\]
so $\nu(E_m)>0$ for some $m$. Absolute continuity gives
$\mu(E_m)>0$, and semifiniteness yields a measurable
$F\subseteq E_m$ with $0<\mu(F)<\infty$.

For every $k\ge m$ we have $v_k>\delta$ on $F$, and therefore
\[
 \int_F v_k\,d\mu\ge\delta\mu(F)>0.
\]
On the other hand, $1_F\in L^1(\mu)$, so the convergence
already proved for $(v_k)$ gives
$\int_Fv_k\,d\mu\to0$. This contradiction proves the result.
\end{proof}

{\bf Remark.} The convergence hypothesis is convergence to zero in
$\sigma(L^\infty(\mu),L^1(\mu))$. When $\mu$ is localizable,
the canonical identification
$L^\infty(\mu)=(L^1(\mu))^*$ makes this the weak$^*$ topology.
The theorem itself requires only semifiniteness.

\bigskip

The last  result will have many applications.  We give one, to Fremlin's notion of measure convergence \cite[Volume 3]{Frem}
(see particularly Chapter 32, and 363, and 367).  We may discuss other applications elsewhere.   

\begin{corollary}[Sequential continuity on the measure algebra]
\label{cor:measure-algebra-sequential-continuity}
Let $(X,\Sigma,\mu)$ be a semifinite measure space, and let $\nu$
be a finite measure on $\Sigma$ such that $\nu\ll\mu$.
Let $\mathfrak A=\Sigma/\mathcal N_\mu$ be the measure algebra,
equipped with its measure-algebra topology, and define
$\bar\nu([E])=\nu(E)$.
Then
\[
 a_n\longrightarrow a
 \quad\Longrightarrow\quad
 \bar\nu(a_n\triangle a)\longrightarrow0.
\]
In particular, $\bar\nu$ is sequentially continuous.
\end{corollary}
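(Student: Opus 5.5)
The plan is to reduce the corollary to Theorem~\ref{thm:mainc}. This uses the standard description of the measure-algebra topology in Fremlin's sense (Volume 3, 323A--B). For a semifinite $\mu$, this topology is generated by the pseudometrics
\[
\rho_F(a,b)=\mu(F\cap(a\triangle b)),\qquad \mu(F)<\infty .
\]
Consequently $a_n\to a$ if and only if $\mu(F\cap(E_n\triangle E))\to0$ for every $F$ of finite measure, where $E_n,E$ are representatives of $a_n,a$. We then put
\[
f_n=\Chi_{E_n\triangle E}\in L^\infty(\mu).
\]
The target $\bar\nu(a_n\triangle a)=\int_X f_n\,d\nu$ is well defined because $\nu\ll\mu$, so the statement reduces to showing that $f_n\to0$ in $\sigma(L^\infty(\mu),L^1(\mu))$.

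To verify this, fix $h\in L^1(\mu)$ and $\eps>0$. First approximate $h$ in $L^1(\mu)$ by a simple function $s=\sum_{j}c_j\Chi_{F_j}$ with each $\mu(F_j)<\infty$, with $\|h-s\|_1<\eps$. Since $0\le f_n\le1$, we have
\[
\Big|\int f_nh\,d\mu\Big|\le \eps+\sum_j|c_j|\,\mu(F_j\cap(E_n\triangle E)).
\]
The finite sum tends to $0$ by the convergence $a_n\to a$ in the measure-algebra topology. Hence $\limsup_n|\int f_nh\,d\mu|\le\eps$, and since $\eps>0$ was arbitrary, $\int f_nh\,d\mu\to0$.

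Theorem~\ref{thm:mainc} now applies directly and gives $\int f_n\,d\nu\to0$, that is, $\bar\nu(a_n\triangle a)\to0$.

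The sequential continuity of $\bar\nu$ follows at once. Indeed, $|\bar\nu(a_n)-\bar\nu(a)|\le\bar\nu(a_n\triangle a)$ because $|\Chi_{E_n}-\Chi_E|=\Chi_{E_n\triangle E}$.

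The only delicate point is the identification of the measure-algebra topology, which I take from Fremlin. Once that identification is granted, the remaining steps are the routine density argument above and the application of the theorem.
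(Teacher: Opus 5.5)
Your proposal is correct and follows essentially the same route as the paper. Both arguments show that the indicators of $a_n\triangle a$ tend to $0$ in $\sigma(L^\infty(\mu),L^1(\mu))$, using the defining pseudometrics of the measure-algebra topology and the density of finite-measure simple functions in $L^1(\mu)$. Both then apply Theorem~\ref{thm:mainc} and finish with the same inequality $|\bar\nu(a_n)-\bar\nu(a)|\le\bar\nu(a_n\triangle a)$.
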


\begin{proof}
Choose $E_n\in\Sigma$ representing $a_n\triangle a$.
By the definition of the measure-algebra topology,
\[
 \int_F\mathbf1_{E_n}\,d\mu
 =\mu(F\cap E_n)\longrightarrow0
 \qquad\text{whenever }F\in\Sigma,\ \mu(F)<\infty.
\]
Finite linear combinations of indicators of finite-measure sets
are dense in $L^1(\mu)$. Since
$\|\mathbf1_{E_n}\|_\infty\le1$, approximation therefore gives
\[
 \int_X g\mathbf1_{E_n}\,d\mu\longrightarrow0
 \qquad g\in L^1(\mu).
\]
Thus $\mathbf1_{E_n}\to0$ in
$\sigma(L^\infty(\mu),L^1(\mu))$. 
Then apply the preceding commutative sequential-continuity theorem
to the sequentially normal positive functional
$ \Phi(f)=\int_X f\,d\nu$, for $f\in L^\infty(\mu)$.
This functional is well defined because $\nu\ll\mu$.
We obtain
\[
 \bar\nu(a_n\triangle a)
 =\Phi(\mathbf1_{E_n})\longrightarrow0.
\]
Finally,
$|\bar\nu(a_n)-\bar\nu(a)|
 \le\bar\nu(a_n\triangle a)\longrightarrow0$ 
 as desired.
\end{proof}

\section{Operator-algebraic preliminaries}\label{sec:prelim}
We use the following standard facts from von Neumann algebra theory
\cite{Sakai,Pedersen}. A positive functional on a von Neumann algebra is
normal if and only if it is completely additive on orthogonal
projections. Normal functionals separate points, have polar and Jordan
decompositions, and the support of a normal positive functional $\omega$  is the
least projection on which $\omega(p) = 1$. We also use the
abstract monotone characterization: a monotone closed or monotone  complete $C^*$-algebra
with a separating family of normal positive functionals is a von Neumann
algebra \cite[Theorem III.3.16]{Takesaki}. Here normality on a monotone complete algebra means preservation
of suprema of bounded increasing nets of positive elements.  For
positive functionals on other ordered $C^*$-algebras, we use
\emph{sequentially normal} in the same order-theoretic sense as in
Section~\ref{sec:intro}.
A von Neumann algebra is \emph{$\sigma$-finite} if it admits a faithful
normal state.

\begin{lemma}\label{lem:sigma}
Every countably additive state on a $\sigma$-finite von Neumann algebra
is normal.
\end{lemma}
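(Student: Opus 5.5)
The plan is to reduce to the standard criterion recalled in Section~\ref{sec:prelim}: a positive functional is normal if and only if it is completely additive on orthogonal projections. So let $\varphi$ be a countably additive state on $M$, fix a faithful normal state $\omega$ on $M$ (which exists since $M$ is $\sigma$-finite), and let $\{p_i\}_{i\in I}$ be an arbitrary family of mutually orthogonal projections. It suffices to show that $\varphi(\sum_i p_i)=\sum_i\varphi(p_i)$.

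The key step is to show that only countably many $p_i$ are nonzero. Since $\omega$ is normal, $\sum_i \omega(p_i)=\omega(\sum_i p_i)\le 1$. Hence for each $k$ only finitely many $i$ satisfy $\omega(p_i)>1/k$, and so $\omega(p_i)>0$ for at most countably many $i$. Because $\omega$ is faithful, $\omega(p_i)=0$ forces $p_i=0$. Therefore the nonzero members form a countable family $p_{i_1},p_{i_2},\dots$, and the strong (equivalently, order) sum $\sum_i p_i$ equals $\sum_n p_{i_n}$. Countable additivity then gives
\[
\varphi\Bigl(\sum_i p_i\Bigr)=\sum_n\varphi(p_{i_n})=\sum_i\varphi(p_i),
\]
since the zero projections contribute nothing. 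This establishes complete additivity, and the cited standard fact (see \cite{Sakai,Pedersen} or \cite{Takesaki}) then yields normality.

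There is no serious obstacle here. The only point needing care is confirming that the complete-additivity criterion for normality applies to arbitrary positive functionals; this is exactly the standard result recalled above. If one wanted to avoid citing it, the fallback would be the usual argument via a maximal orthogonal family of projections on which $\varphi$ is dominated by normal functionals. In the $\sigma$-finite case that family is countable by the argument above, so countable additivity again suffices.
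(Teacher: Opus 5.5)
Your proposal is correct and follows essentially the same route as the paper, which simply notes that on a $\sigma$-finite algebra countable additivity on projections is equivalent to complete additivity, and hence to normality. You supply the details the paper leaves implicit: you use a faithful normal state to show that every orthogonal family of nonzero projections is countable, and then invoke the standard complete-additivity criterion for normality.
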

\begin{proof} As we said in the Introduction this is immediate 
simply because it is equivalent to complete additivity on projections.  \end{proof}

The following is probably also well known.

\begin{lemma} \label{lem:selection}
Let $A$ be a unital monotone sequentially complete $C^*$-algebra with a
faithful positive functional $\Theta$ preserving suprema of bounded
increasing positive sequences. Then $A$ is monotone complete, and each
bounded upward-directed subset of $A_{\rm sa}$ has an increasing
sequence with the same supremum. Every sequentially normal positive
functional on $A$ is therefore normal.
\end{lemma}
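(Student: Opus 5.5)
The proof rests on an exhaustion argument driven by $\Theta$. Let $D\subseteq A_{\rm sa}$ be bounded above and upward directed. Replacing $D$ by $D+c1$ for a suitable scalar $c$, we may assume $D$ contains positive elements. Since we may also discard all elements not dominating some fixed $d_0\in D$, we may assume $D\subseteq A_+$ is bounded. Put $s=\sup\{\Theta(d):d\in D\}<\infty$.

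Choose $d_n\in D$ with $\Theta(d_n)\to s$. Using directedness, replace $d_n$ inductively by an element of $D$ dominating $d_1,\dots,d_n$; this gives an increasing sequence $(d_n)$ in $D$ with $\Theta(d_n)\uparrow s$. Monotone sequential completeness gives $x=\sup_n d_n\in A$, and the sequential normality of $\Theta$ gives $\Theta(x)=s$.

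The key step is to show that $x$ is an upper bound for $D$. Fix $d\in D$ and, again by directedness, choose an increasing sequence $e_n\in D$ with $e_n\ge d_n$ and $e_n\ge d$. Let $y=\sup_n e_n$. Then $y\ge x$ and $\Theta(y)=\lim_n\Theta(e_n)\le s=\Theta(x)$. Hence $\Theta(y-x)\le 0$ with $y-x\ge0$, and faithfulness of $\Theta$ gives $y=x$. Therefore $d\le e_1\le y=x$.

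Finally, $x$ is the least upper bound of $D$. Any upper bound $z$ of $D$ dominates every $d_n$, so $z\ge x$ because $x$ is the supremum of the sequence. This proves that $A$ is monotone complete, and that the increasing sequence $(d_n)$ has the same supremum as $D$.

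The last assertion is then immediate. If $\psi$ is sequentially normal, then $\psi(\sup D)=\psi(\sup_n d_n)=\lim_n\psi(d_n)\le\sup_{d\in D}\psi(d)\le\psi(\sup D)$. Hence $\psi(\sup D)=\sup_{d\in D}\psi(d)$, so $\psi$ preserves suprema of bounded increasing nets, that is, $\psi$ is normal.

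The main obstacle is the step showing that $x$ dominates every element of $D$. It is handled by building the auxiliary sequence $(e_n)$ and using faithfulness. The only care needed is that sequential suprema in $A$ are genuine least upper bounds in $A$, and that these suprema respect the order comparisons used above. Both facts follow directly from the definition of monotone sequential completeness.
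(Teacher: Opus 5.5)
Your proof is correct and is essentially the paper's argument. You pass to a cofinal, translated positive subset, extract an increasing sequence maximizing $\Theta$, and then apply faithfulness of $\Theta$ to an auxiliary increasing sequence dominating both $d_n$ and a given $d\in D$; this shows the sequential supremum is $\sup D$, and the normality assertion follows exactly as in the paper.
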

\begin{proof}
For a general bounded upward-directed subset $D$ of $A_{\rm sa}$,
fix $d_0\in D$, consider the cofinal subset
\[
   D_0=\{d\in D:d\geq d_0\}.
\]
By adding a fixed element $x$ say, that is by replacing $D_0$ by $D_0 + x$, it suffices to consider
$D\subseteq A_+$. 
Let
$t=\sup_{d\in D}\Theta(d)$ and choose an increasing sequence
$d_n\in D$ with $\Theta(d_n)\to t$. Put $d=\sup_nd_n$.
For any $b\in D$, directedness gives an increasing sequence $c_n\in D$
with $c_n\geq d_n,b$. Let $c=\sup_nc_n$. Then $c\geq d,b$, while
\[
 \Theta(c)=\lim_n\Theta(c_n)\leq t=\Theta(d).
\]
Faithfulness forces $c=d$, so $b\leq d$. Thus $d=\sup D$.  So $A$ is monotone complete.

If $F$ is any sequentially normal positive functional, then
\[
 F(d) =   F(\sup D)=\sup_nF(d_n) \leq \sup_{b\in D}F(b) \leq F(d),
\]
which proves normality.
\end{proof}

The already monotone complete version of this
selection principle is discussed in \cite{Thill}.

\section{Disintegration over an arbitrary power-set measure}\label{sec:product}
The next few sections use the following notation.
 Let $(X,\Pset(X),\mu)$ be a probability space and let $N_x$ be nonzero
$\sigma$-finite von Neumann algebras. Their product, the usual 
von Neumann algebra direct sum $N=\oplus_{x \in X} \, N_x$ 
consists of all uniformly bounded fields, with coordinatewise operations.  Put
\[
 J_\mu=\{a\in N:a_x=0\text{ for }\mu\text{-almost every }x\},
 \qquad A_\mu=N/J_\mu.
\]
The ideal $J_\mu$ is norm closed.  Indeed a norm-convergent sequence of its
members vanishes off the union of their countably many exceptional null
sets, and pointwise norm convergence outside that union forces its limit
to vanish there as well. It is plainly a self-adjoint two-sided ideal.
Thus $A_\mu$ is a unital $C^*$-algebra.

All integrals below are as encountered in a graduate real variables course, however with the $\sigma$-algebra being the 
power set. Since every subset
of $X$ is measurable, every scalar-valued function of $x$ that occurs
is measurable. 

\paragraph{Relation to  direct-integral theory.}
The construction is closely related to the known nonseparable direct-integral
theory.  See e.g.\ Wils \cite{Wils} and Vesterstr{\o}m--Wils
\cite[Theorems~4.1 and~4.3]{VW}. For a fixed $W^*$-algebra, the latter
results realize the essentially bounded weakly measurable operator
fields as a von Neumann algebra and identify the resulting algebra
with a von Neumann tensor product. We give a direct proof because the
fibers $N_x$ here may vary and the full power-set $\sigma$-algebra
admits every bounded field without an additional measurable-field
hypothesis. The proof also gives the sequential normality of the
quotient map and of the integral functionals.

Halpern's papers \cite{Halpern1966,Halpern1972,Halpern1978} provide
further context for integral representations of normal functionals and
decompositions of von Neumann algebras, including nonseparable cases.
In particular, \cite{Halpern1978} treats algebras not countably
generated over their centers. In contrast, in Theorem~\ref{thm:product} below, the
state is assumed to be merely countably additive on projections.   
Thus the quotient and measurable-field constructions draw heavily on the
generalized direct integral theory from the 1970s.  The novel point  here is that the
initial state is assumed only to be countably additive on projections,
rather than normal. 

\subsection{The quotient von Neumann algebra}

\begin{lemma}\label{lem:quotient}
The algebra $A_\mu$ is a $\sigma$-finite von Neumann algebra.
The quotient map preserves suprema of bounded increasing sequences of
self-adjoint elements.
\end{lemma}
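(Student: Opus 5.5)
The plan is to realize $A_\mu$ abstractly as a monotone complete $C^*$-algebra carrying a separating family of normal positive functionals, and then invoke \cite[Theorem III.3.16]{Takesaki}. The first step is to choose, for each $x\in X$, a faithful normal state $\omega_x$ on $N_x$; such states exist because each $N_x$ is $\sigma$-finite, and the axiom of choice suffices to make the selection. We then define
\[
 \Theta(a)=\int_X\omega_x(a_x)\,d\mu(x),\qquad a\in N .
\]
The integrand is a bounded function on $X$, and every subset of $X$ is measurable, so $\Theta$ is a well defined positive functional on $N$. It vanishes on $J_\mu$, so it descends to a positive functional $\Theta$ on $A_\mu$. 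The same construction applies to any field $(\rho_x)$ of normal positive functionals with $\sup_x\|\rho_x\|<\infty$ and to any $h\in L^1(\mu)_+$, giving the functional $a\mapsto\int h(x)\rho_x(a_x)\,d\mu$.

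The second step shows that $\Theta$ is faithful on $A_\mu$. Suppose $a\ge0$ in $A_\mu$, and lift it to a positive field. Lifting positive elements is possible because $J_\mu$ is an ideal: take a self-adjoint lift and replace it by its positive part coordinatewise. If $\Theta(a)=0$, then $\omega_x(a_x)=0$ for $\mu$-a.e.\ $x$. Faithfulness of $\omega_x$ gives $a_x=0$ a.e., so $a\in J_\mu$.

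The third step is monotone sequential completeness together with sequential normality of the quotient map; this is the key computation. Let $q(b_n)$ be a bounded increasing sequence of self-adjoint elements of $A_\mu$. Choose self-adjoint lifts $b_n$ with a uniform bound; this is possible by truncating lifts coordinatewise with functional calculus. The relation $b_{n+1}-b_n\ge0$ holds modulo $J_\mu$, so it holds coordinatewise off a null set $E_n$. Off the countable union $E=\bigcup E_n$, which is again null, each sequence $(b_n)_x$ increases in $N_x$. Put $b_x=\sup_n(b_n)_x$, the strong limit in $N_x$, for $x\notin E$, and $b_x=0$ on $E$. Then $b\in N$ and $q(b)\ge q(b_n)$. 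To see that $q(b)$ is the least upper bound, take any upper bound $q(c)$. For each $n$ we have $c_x\ge(b_n)_x$ off a null set; taking the countable union of these null sets gives $c_x\ge b_x$ a.e., so $q(c)\ge q(b)$. This shows that $q$ preserves suprema of bounded increasing sequences. Normality of each $\omega_x$ and the scalar monotone convergence theorem then give $\Theta(q(b_n))\uparrow\Theta(q(b))$, and the same holds for every functional of the type described in the first step. Lemma~\ref{lem:selection} now implies that $A_\mu$ is monotone complete, that $\Theta$ is normal, and that all these functionals are normal.

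The final step is separation. The functionals from the first step, with $h=\chi_S$ and arbitrary normal states $\rho_x$, separate the points of $A_\mu$. If $q(a)\ne0$ with $a\ge0$, then $\Theta(q(a))>0$ by faithfulness, which already gives separation on positive elements; a general element is handled through its real and imaginary parts and their positive and negative parts. Takesaki's theorem then yields that $A_\mu$ is a von Neumann algebra, and the faithful normal state $\Theta$ shows that it is $\sigma$-finite. The main obstacle is the careful handling of null sets when choosing the lifts: every a.e.\ statement must involve only countably many null sets, since $\mu$ is merely countably additive on $\mathcal P(X)$. The bounded self-adjoint lifting also requires some care.
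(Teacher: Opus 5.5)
Your proof is correct in outline, and up to the final step it follows the paper's route. Both integrate fiberwise faithful normal states against $\mu$, make the lifts monotone off a countable union of null sets, take coordinatewise suprema, apply scalar monotone convergence, and obtain monotone completeness and normality from Lemma~\ref{lem:selection}.

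The genuine difference is the separating family fed into Takesaki's characterization. The paper stays with the single faithful state $\Theta$. It uses the compressions $\Theta_b(u)=\Theta(b^*ub)$, which are sequentially normal by the same coordinatewise argument. For $u=u^*\neq0$ it takes $b=g(u)$, with $g$ supported away from $0$ on the side of a nonzero spectral point $\lambda$. Then $b^*ub=ug(u)^2$ is nonzero of fixed sign, and faithfulness of $\Theta$ gives $\Theta_b(u)\neq0$.

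You instead use the functionals $q(a)\mapsto\int_S\rho_x(a_x)\,d\mu$ built from arbitrary fields of normal states. These form the positive part of the family that Lemma~\ref{lem:predual} later identifies with the whole predual. Your route exploits the power-set $\sigma$-algebra and the fibers directly, and in effect front-loads the point-separation step of Lemma~\ref{lem:predual}. The paper's route is intrinsic to $A_\mu$ and needs only one auxiliary state plus functional calculus.

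Your justification of separation is too thin as written. Faithfulness of $\Theta$ only separates positive elements. Splitting a self-adjoint $a$ as $a_+-a_-$ does not help for a fixed functional, which may vanish on $a_+-a_-$ without vanishing on $a_\pm$. The freedom you built into the family is exactly what closes this gap:
\begin{itemize}
\item Take a self-adjoint lift $a$ with $q(a)\neq0$. Replacing $a$ by $-a$ if necessary, the set $S=\{x:(a_x)_+\neq0\}$ has $\mu(S)>0$.
\item For $x\in S$ let $e_x=s((a_x)_+)$ and $\rho_x=\omega_x(e_x\,\cdot\,e_x)/\omega_x(e_x)$.
\item Since $e_x(a_x)_-=0$, you get $\rho_x(a_x)=\omega_x((a_x)_+)/\omega_x(e_x)>0$ on $S$, hence $\int_S\rho_x(a_x)\,d\mu>0$.
\item Non-self-adjoint elements reduce to this case because positive functionals are hermitian.
\end{itemize}

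A smaller point: positive classes lift to positive fields because the quotient $*$-homomorphism $q$ commutes with continuous functional calculus, so $q(a_+)=q(a)_+=q(a)$. The fact that $J_\mu$ is an ideal is not the real reason. The same fact justifies the almost-everywhere inequalities $c_x\geq(b_n)_x$ in your least-upper-bound step.
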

\begin{proof}
The quotient norm is
\[
 \|[a]\|=\mathop{\rm ess\,sup}_{x\in X}\|a_x\|.
\]
For the upper bound here, if $K$ is larger than the essential supremum,
replace $a_x$ by zero on the null set $\{x:\|a_x\|>K\}$; the resulting
field represents $[a]$ and has norm at most $K$. For the lower bound,
if $j\in J_\mu$, then $j_x=0$ outside a null set, so
$\|a+j\|\geq\mathop{\rm ess\,sup}_x\|a_x\|$. Taking the infimum over
$j$ proves the formula. Similarly, for a self-adjoint field $a$, the
class $[a]$ is positive exactly when $a_x\geq0$ almost everywhere. If
$a_x\geq0$ almost everywhere, set $c_x=a_x^{1/2}$ there and $c_x=0$
on the exceptional null set; then $[a]=[c]^*[c]\geq0$. Conversely, if
$[a]\geq0$, its negative part in the quotient is zero. The functional
calculus commutes with the quotient map  \cite[Proposition II.2.3.2(iv)]{Black}, and so $[a_-]=[a]_-=0$. Thus
$a_-=0$ almost everywhere,  with $a_x\geq0$ almost everywhere.

Now let $0\leq u_n\leq C1$ be increasing in $A_\mu$. Choose
self-adjoint representatives $c^{(n)}$. Positivity of $u_n$ and
$C1-u_n$ implies that, outside a null set depending on $n$,
$0\leq c_x^{(n)}\leq C1$.  Positivity of $u_{n+1}-u_n$ implies that
$c_x^{(n)}\leq c_x^{(n+1)}$ outside another null set. The union $Z$ of
all these countably many null sets is null. Define
\[
 a_x^{(n)}=\begin{cases}c_x^{(n)},&x\notin N,\\0,&x\in Z.
 \end{cases}
\]
Then $a^{(n)}$ still represents $u_n$, and
$0\leq a_x^{(n)}\leq a_x^{(n+1)}\leq C1$ for every $x$. Define
$a_x=\sup_na_x^{(n)}$ in the monotone-complete algebra $N_x$. The
field $a=(a_x)$ is uniformly bounded by $C$, so it belongs to $N$, and
$[a]$ is coordinatewise an upper bound for all $u_n$. If $[b]$ is any
other upper bound, first take
a self-adjoint representative $b$. For every $n$, positivity of
$[b-a^{(n)}]$ gives $b_x\geq a_x^{(n)}$ off a null set. Outside the
union of these null sets, $b_x\geq\sup_na_x^{(n)}=a_x$. Thus
$[b]\geq[a]$, proving that $[a]=\sup_nu_n$. This also proves directly
that the quotient map preserves the indicated suprema.

Choose faithful normal states $\theta_x$ on $N_x$. The state
\[
 \Theta([a])=\int_X\theta_x(a_x)\,d\mu(x)
\]
is well defined because changing $a$ on a null set does not change the
integral. The scalar function $x\mapsto\theta_x(a_x)$ is measurable
 since every subset of $X$ is
measurable, and it is bounded by $\|a\|$. If $[a]\geq0$, choose a representative which is positive at every
coordinate by setting it equal to zero on its exceptional null set.
If $\Theta([a])=0$, then $\theta_x(a_x)=0$ almost everywhere;
faithfulness of every $\theta_x$ gives $a_x=0$ there, so $[a]=0$.
Thus $\Theta$ is faithful. The coordinate description of suprema above
also gives a complete proof of sequential normality.  Namely, suppose
$0\leq u_n\uparrow u$ in $A_\mu$.  The construction above gives
representatives
\[
 0\leq a_x^{(1)}\leq a_x^{(2)}\leq\cdots,\qquad
 a_x^{(n)}\uparrow a_x
\]
for $u_n$ and $u$, respectively, outside one common null set; set all
these representatives equal to zero on that null set.  Since each
$\theta_x$ is normal,
$\theta_x(a_x^{(n)})\uparrow\theta_x(a_x)$ pointwise.  The scalar
monotone convergence theorem now gives
\[
 \Theta(u)
 =\int_X\theta_x(a_x)\,d\mu(x)
 =\lim_{n\to\infty}\int_X\theta_x(a_x^{(n)})\,d\mu(x)
 =\lim_{n\to\infty}\Theta(u_n).
\]
This is exactly sequential normality of $\Theta$.
Lemma~\ref{lem:selection} shows that
$A_\mu$ is monotone complete and $\Theta$ is normal.

For $b=[(b_x)]\in A_\mu$, the positive functional
$\Theta_b(u)=\Theta(b^*ub)$ is sequentially normal: if $u_n\uparrow u$,
the representatives just constructed satisfy
$b_x^*a_x^{(n)}b_x\uparrow b_x^*a_xb_x$ at every coordinate, and
monotone convergence applies. It is therefore normal by
Lemma~\ref{lem:selection}. These functionals separate the self-adjoint
part. Indeed, let $u=u^*\ne0$ and choose
$\lambda\in\sigma(u)\setminus\{0\}$.  Choose $\eps>0$ and a real
continuous function $g$ on $\sigma(u)$ such that $g(\lambda)\ne0$ and
whose support lies in $(\eps,\infty)$ if $\lambda>0$, or in
$(-\infty,-\eps)$ if $\lambda<0$.  For $b=g(u)$, functional calculus
gives
\[
 b^*ub=u\,g(u)^2.
\]
This element is nonzero because the continuous function
$t\mapsto t g(t)^2$ does not vanish at $\lambda$.  Moreover,
$b^*ub\geq\eps b^*b$ in the first case and
$b^*ub\leq-\eps b^*b$ in the second.  Thus it has a fixed sign, and
faithfulness shows that $\Theta(b^*ub)$ has the corresponding strict
sign.  
The aforementioned abstract
monotone characterization \cite[Theorem III.3.16]{Takesaki} now makes $A_\mu$ a von Neumann algebra.
Since it has a faithful normal state it is $\sigma$-finite.
\end{proof}

\subsection{Identification of the predual}

\begin{lemma}[The predual as integrable fields]\label{lem:predual}
With notation above, let $\mathcal E$ be the space of fields $f_x\in(N_x)_*$ with
$\int_X\|f_x\|\,d\mu(x)<\infty$, modulo almost-everywhere equality.
Then
\begin{equation}\label{eq:predual}
 \mathcal E\longrightarrow(A_\mu)_*,\qquad
 f\longmapsto F_f,\qquad
 F_f([a])=\int_Xf_x(a_x)\,d\mu(x)
\end{equation}
is a surjective linear isometry. Moreover, $F_f\geq0$ if and only if
$f_x\geq0$ almost everywhere.
\end{lemma}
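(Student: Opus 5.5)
The plan has four steps: well-definedness, isometry, positivity, and surjectivity; surjectivity is the hard part.

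\textbf{Well-definedness and normality.} For $f\in\mathcal E$ and $a\in N$ the scalar function $x\mapsto f_x(a_x)$ is measurable, since the $\sigma$-algebra is the power set. It is dominated by $\|a\|\,\|f_x\|$, so $F_f$ is a well-defined bounded functional with $\|F_f\|\le\int\|f_x\|\,d\mu$. It is unchanged if $a$ is altered on a null set, or if $f$ is altered a.e. To see that $F_f$ is normal, I would first decompose each $f_x$ into four positive parts $f_x=\sum_k i^k f_x^{(k)}$ with $\|f_x^{(k)}\|\le\|f_x\|$; this is possible by the Jordan decomposition of normal functionals. For positive $f$, sequential normality of $F_f$ follows exactly as for $\Theta$ in Lemma~\ref{lem:quotient}: take coordinatewise monotone representatives, use normality of each $f_x$, and apply scalar monotone convergence. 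Lemma~\ref{lem:selection}, applied with the faithful functional $\Theta$, then upgrades sequential normality to normality.

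\textbf{Isometry and positivity.} To show $\|F_f\|\ge\int\|f_x\|\,d\mu$, I would use polar decomposition in each fiber. For each $x$ choose a partial isometry $v_x\in N_x$ with $f_x(v_x)=\|f_x\|$; no measurable selection issue arises because every field is measurable. The element $a=[(v_x)]$ has norm at most $1$, and $F_f(a)=\int\|f_x\|$. The same argument shows injectivity. For positivity: if $f_x\ge0$ a.e., then $F_f\ge0$ by the positive-representative description of the order in Lemma~\ref{lem:quotient}. Conversely, suppose $F_f\ge0$. Then $F_f$ is hermitian, which forces $f_x$ to be hermitian a.e.; testing against the field $(v_x)$ built from the imaginary parts gives this. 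Next let $p_x$ be the support of the negative part of $f_x$ and test $F_f$ on $[(p_x)]$. This yields $\int f_x(p_x)\,d\mu\ge0$, while each integrand is $\le0$, so the negative parts vanish a.e.

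\textbf{Surjectivity (main obstacle).} Let $\omega\in(A_\mu)_*$; by the Jordan decomposition it suffices to take $\omega\ge0$. For each $E\subseteq X$, the central projection $z_E=[\Chi_E]$ gives a measure $E\mapsto\omega(z_E)$. This measure is countably additive because $\omega$ is normal, and it is $\mu$-absolutely continuous. Likewise, for fixed $a$ the set function $E\mapsto\omega(z_Ea)$ is a complex measure absolutely continuous with respect to $\mu$.

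The key point is that $\mu$ lives on the power set, so we need a Radon--Nikodym derivative, and $\mu$ is finite. This gives $g_a(x)=d\omega(z_{\cdot}a)/d\mu$. The difficulty is assembling these densities into normal functionals $f_x$ with $f_x(a_x)=g_a(x)$ pointwise a.e. simultaneously for uncountably many $a$. Null sets depend on $a$, so a naive linear-map construction fails.

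The route I would try first is to avoid pointwise construction altogether, via a density argument. The image $\{F_f\}$ is a norm-closed subspace of $(A_\mu)_*$, since it is isometric to the Banach space $\mathcal E$, which is complete by the usual $L^1$ argument. By the bipolar/Hahn--Banach theorem it suffices to show that this subspace separates points of $A_\mu$. Indeed, if it were a proper closed subspace there would exist a nonzero $a\in A_\mu=((A_\mu)_*)^*$ annihilating it. But the $F_f$ include the functionals $\Theta_b$ from Lemma~\ref{lem:quotient}, taking $f_x=\theta_x(b_x^*\cdot\,b_x)$, and these separate the self-adjoint part. Applying this to the real and imaginary parts of $a$ gives the contradiction. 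I expect the care needed to be in confirming that the annihilating element really lies in $A_\mu$, which holds because $A_\mu$ is a von Neumann algebra by Lemma~\ref{lem:quotient}, and in checking that separation of $A_\mu$ by this subspace is equivalent to weak$^*$ density of its closure.
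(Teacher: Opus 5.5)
Your proposal is correct and follows essentially the same route as the paper: fiberwise norming elements for the isometry, and Lemma~\ref{lem:selection} to upgrade sequential normality. Surjectivity comes from the norm-closed image together with point separation and Hahn--Banach, and the converse positivity from the supports of the negative parts. The only minor differences are these: you get point separation from the hermitian functionals $\Theta_b$ of Lemma~\ref{lem:quotient} applied to $\operatorname{Re} a$ and $\operatorname{Im} a$, where the paper builds a field of norm-one functionals supported on $S=\{x:\|a_x\|>\delta/2\}$; and you norm each $f_x$ by polar decomposition rather than by weak$^*$ compactness.
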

\begin{proof}
Equip $\mathcal E$ with the norm $\|f\|_1=\int\|f_x\|\,d\mu$. The space $\mathcal E$ is complete. Indeed, a Cauchy
sequence has a subsequence $f^{(n)}$
with $\sum_n\|f^{(n+1)}-f^{(n)}\|_1<\infty$. By monotone convergence,
\[
 \sum_n\|f_x^{(n+1)}-f_x^{(n)}\|<\infty
\]
almost everywhere. Completeness of each $(N_x)_*$ gives a pointwise
limit $f_x$, and the integrals of the tails of the series give convergence
in $\mathcal E$. The original Cauchy sequence has the same limit.

The displayed formula is well defined.  Changing the field on a null
set does not alter the integral, and
$|f_x(a_x)|\leq\|f_x\|\|a_x\|$ gives
$\|F_f\|\leq\|f\|_1$. For each $x$, weak$^*$ compactness of the unit ball
of $N_x$ and weak$^*$ continuity of the normal functional $f_x$ give an element $b_x$ in the unit
 ball with $|f_x(b_x)|=\|f_x\|$. By multiplying $b_x$
by a suitable scalar of modulus one, we may obtain an  $a_x$ so that
$f_x(a_x)=\|f_x\|$. The field $(a_x)$ has norm at most one and hence
belongs to $N$. 
Therefore
$F_f([a])=\int\|f_x\|\,d\mu$ and $\|F_f\|=\|f\|_1$.

Suppose first that $f_x\geq0$ almost everywhere.  If $[a]\geq0$, choose
a representative with $a_x\geq0$ at every coordinate.  Then
$F_f([a])=\int_Xf_x(a_x)\,d\mu(x)\geq0$.  Thus $F_f$ is positive.
For sequential normality, let $0\leq u_n\uparrow u$
in $A_\mu$.  The coordinatewise construction in
Lemma~\ref{lem:quotient} gives representatives
$0\leq a_x^{(n)}\uparrow a_x$ of $u_n$ and $u$, after altering all
the fields on a common null set.  Since $f_x$ is a normal positive
functional on $N_x$,
\[
 f_x(a_x^{(n)})\uparrow f_x(a_x)
\]
for almost every $x$.  Scalar monotone convergence therefore gives
\[
 F_f(u)
   =\int_X f_x(a_x)\,d\mu(x)
   =\lim_{n\to\infty}\int_X f_x(a_x^{(n)})\,d\mu(x)
   =\lim_{n\to\infty}F_f(u_n).
\]
Thus $F_f$ preserves suprema of bounded increasing positive sequences.
By Lemma~\ref{lem:quotient} $A_\mu$ is a
$\sigma$-finite von Neumann algebra.  The proof of that lemma shows that  its state $\Theta$ is
faithful and normal. In particular, $A_\mu$ is monotone sequentially
complete and $\Theta$ preserves bounded increasing sequential suprema.
Hence by Lemma~\ref{lem:selection} the
sequentially normal positive functional $F_f$ is normal. 

For general $f_x$, put
$\operatorname{Re}f_x=(f_x+f_x^*)/2$ and
$\operatorname{Im}f_x=(f_x-f_x^*)/(2i)$, and take the normal Jordan
decomposition of each self-adjoint functional. The Jordan components
are positive and their norm sums equal the norm of the self-adjoint
functional. Since
$\|\operatorname{Re}f_x\|,\|\operatorname{Im}f_x\|\leq\|f_x\|$,
all four Jordan components are integrable. This expresses $f$ as a
linear combination of four positive integrable fields, so $F_f$ is
normal too.

The image of \eqref{eq:predual} is norm closed by completeness and the
isometry. It also separates points of $A_\mu$. Indeed, if $[a]\ne0$, put
$\delta=\|[a]\|$ and
$S=\{x:\|a_x\|>\delta/2\}$. The quotient-norm formula implies
$\mu(S)>0$. For $x\in S$, choose a normal functional $f_x$ of norm at
most one such that $|f_x(a_x)|>\delta/2$.  By scaling with a suitable unimodular scalar we may assume that
$f_x(a_x)>\delta/2$. Set $f_x=0$ outside $S$. Then $f\in\mathcal E$ and
$F_f([a])>(\delta/2)\mu(S)>0$.
If this closed subspace were proper in $(A_\mu)_*$, the Hahn--Banach theorem 
would give a nonzero element of $((A_\mu)_*)^*=A_\mu$ annihilating it,
contradicting the point separation above. This proves surjectivity.

Finally, suppose $F_f$ is positive. The involution
$f_x^*(a)=\overline{f_x(a^*)}$ is compatible with
\eqref{eq:predual}.  In particular, injectivity implies $f_x=f_x^*$ almost everywhere.
Modulo a null set, we may write
$f_x=f_x^+-f_x^-$ for the normal Jordan decomposition. Let
$r_x=s(f_x^-)$. The positive and negative parts have orthogonal
supports, so $f_x(r_x)=-\|f_x^-\|$. Hence the projection field $r=(r_x)$ belongs to $N$
and gives
\[
 0\leq F_f([r])=-\int_X\|f_x^-\|\,d\mu(x).
\]
Thus $f_x^-=0$ almost everywhere, completing the converse. The forward
positivity implication was proved above.
\end{proof}

\subsection{Disintegration of a countably additive state}

\begin{theorem}[Product disintegration]\label{thm:product}
With notation as above, let $\psi$ be a state on $N=\oplus_{x \in X} \, N_x$ which is countably additive on
projections, and define $\mu(S)=\psi(1_S)$ for $S \subseteq X$. Then  for all $x \in X$ there exists a normal state
$\psi_x$ on $N_x$ such that
\[
 \psi((a_x))=\int_X\psi_x(a_x)\,d\mu(x), \qquad a \in N.
\]
For this fixed product and fixed measure, the field $(\psi_x)$   of normal functionals is unique up to
almost-everywhere equality.
\end{theorem}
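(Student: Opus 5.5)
The plan is to show that $\psi$ factors through the quotient $A_\mu=N/J_\mu$ as a normal state, and then to read off the fibers $\psi_x$ from Lemma~\ref{lem:predual}.

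\textbf{Step 1: $\mu$ is a probability measure on $\Pset(X)$.} The sets $1_S$ are central projections of $N$. If $S=\bigsqcup_n S_n$, then $1_S=\sum_n 1_{S_n}$ with mutually orthogonal summands, so countable additivity of $\psi$ gives countable additivity of $\mu$. Also $\mu(X)=\psi(1)=1$. Hence the notation of Lemmas~\ref{lem:quotient} and~\ref{lem:predual} applies with this $\mu$.

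\textbf{Step 2: $\psi$ vanishes on $J_\mu$.} Let $a\in J_\mu$, and let $S$ be the null set off which $a_x=0$. Then $a=1_S a 1_S$. Cauchy--Schwarz gives
\[
 |\psi(a)|\le \psi(1_S)^{1/2}\,\psi(1_S a^*a 1_S)^{1/2}\le \|a\|\,\mu(S)=0 .
\]
So $\psi=\tilde\psi\circ q$ for a state $\tilde\psi$ on $A_\mu$, where $q$ is the quotient map.

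\textbf{Step 3: $\tilde\psi$ is normal.} This is the main obstacle. We know only countable additivity of $\psi$ on projections of $N$. Since $A_\mu$ is $\sigma$-finite (Lemma~\ref{lem:quotient}), Lemma~\ref{lem:sigma} reduces the problem to countable additivity of $\tilde\psi$ on projections of $A_\mu$.

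Let $(e_n)$ be mutually orthogonal projections in $A_\mu$.
\begin{itemize}
\item Lift each $e_n$ to a projection field $p^{(n)}$; since $[a]$ is a projection when $a_x$ is a projection a.e., we may modify on a null set.
\item Orthogonality of $e_n,e_m$ means $p^{(n)}_xp^{(m)}_x=0$ off a null set. Discard the countable union $Z$ of these null sets by zeroing all fields on $Z$. This gives genuinely orthogonal projections $p^{(n)}$ in $N$.
\item The sum $p=\sum_n p^{(n)}$ in $N$ is computed coordinatewise, and $q$ preserves increasing sequential suprema (Lemma~\ref{lem:quotient}). Hence $q(p)=\sup_n\sum_{k\le n}e_k=\sum_n e_n$.
\item Countable additivity of $\psi$ on $N$ then gives $\tilde\psi(\sum_n e_n)=\psi(p)=\sum_n\psi(p^{(n)})=\sum_n\tilde\psi(e_n)$.
\end{itemize}
Thus $\tilde\psi$ is normal.

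\textbf{Step 4: read off the fibers.} By Lemma~\ref{lem:predual} there is a field $f\in\mathcal E$ with $f_x\ge 0$ a.e. and $\tilde\psi=F_f$, so
\[
 \psi(a)=\int_X f_x(a_x)\,d\mu(x).
\]
Normalize the fibers as follows.
\begin{itemize}
\item Apply the formula to $a=1_S b$, where $b_x=1$ for all $x$. This gives $\mu(S)=\int_S f_x(1)\,d\mu$ for every $S\subseteq X$, hence $f_x(1)=1$ a.e.
\item Redefine $f_x$ on the exceptional null set to be any normal state of $N_x$. Such a state exists because $N_x\neq 0$ is $\sigma$-finite.
\end{itemize}
Setting $\psi_x=f_x$ gives normal states for every $x$ satisfying the integral formula.

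\textbf{Step 5: uniqueness.} Suppose $(\psi'_x)$ is another field of normal states satisfying the same formula. Both fields lie in $\mathcal E$, since each has norm $1$ and $\mu$ is finite, and both represent $\tilde\psi$. Injectivity of the isometry in Lemma~\ref{lem:predual} then forces $\psi_x=\psi'_x$ almost everywhere.
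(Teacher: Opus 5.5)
Your proposal is correct and follows the paper's proof essentially step for step: you pass to $A_\mu=N/J_\mu$ via Cauchy--Schwarz, lift an orthogonal sequence of quotient projections to pointwise orthogonal projection fields (zeroing on a countable union of null sets) to get countable additivity and hence normality via Lemma~\ref{lem:sigma}, then read off and normalize the fibers from Lemma~\ref{lem:predual}, with uniqueness coming from its isometry. One small point of wording: in the lifting step you cite the converse of what is needed; the relevant fact is that a self-adjoint representative $a$ of a quotient projection satisfies $a^2-a\in J_\mu$, so $a_x$ is a projection off a null set.
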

\begin{proof}
The characteristic functions satisfy
$1_{\bigcup_n S_n}=\sum_n1_{S_n}$ for disjoint subsets $S_n$ of $X$.
Countable additivity of $\psi$ therefore makes
$\mu(S)=\psi(1_S)$ a countably additive probability measure
on $\Pset(X)$. We now apply the preceding quotient and
predual constructions to this induced measure $\mu$. In particular, $J_\mu$ and $A_\mu=N/J_\mu$ are
 formed
using this choice of measure. If $a\in J_\mu$ then it is supported on a central projection $1_S$ with
$\mu(S)=0$, namely $S=\{x:a_x\ne0\}$. Since $a=1_S \, a$, the 
Cauchy--Schwarz inequality gives
\[
 |\psi(a)|^2\leq\psi(1_S)\psi(a^*a)=0.
\]
Thus $\psi$ induces a state $\bar\psi$ on $A_\mu$.

This induced state is countably additive on projections. To check this,
lift any orthogonal sequence $(e_n)$ of quotient projections to
coordinate projection fields $p^{(n)}$. To see that such lifts exist,
start with a self-adjoint representative $a^{(n)}$; the identities
$e_n^2=e_n$ imply that $(a_x^{(n)})^2=a_x^{(n)}$ outside a null set;
self-adjointness then makes $a_x^{(n)}$ a projection there, and it may
be replaced by zero on the exceptional set. For $n\ne m$,
$e_ne_m=0$ implies $p_x^{(n)}p_x^{(m)}=0$ outside a null set. There are
only countably many pairs, so their exceptional sets have a null union
$Z$ say. Replacing every $p_x^{(n)}$ by zero for $x\in Z$ makes the fields
pointwise orthogonal without changing their quotient classes. The
coordinatewise strong sum $p_x=\sum_np_x^{(n)}$ represents
$\bigvee_ne_n$: apply Lemma~\ref{lem:quotient} to the increasing finite
partial sums $\sum_{n=1}^k p^{(n)}$. Countable additivity of $\psi$
on the lifted projections gives
\[
 \bar\psi(\bigvee_ne_n)=\psi(p)
 =\sum_n\psi(p^{(n)})=\sum_n\bar\psi(e_n).
\]

By Lemmas~\ref{lem:quotient} and \ref{lem:sigma}, $\bar\psi$ is normal.
Lemma~\ref{lem:predual} gives an integrable positive normal field $f_x$
representing it. For every $S\subseteq X$,
\[
 \mu(S)=\psi(1_S)=\int_S f_x(1)\,d\mu(x).
\]
Thus $\int_S(f_x(1)-1)\,d\mu=0$ for every $S$. Applying this to the
sets where $f_x(1)>1$ and where $f_x(1)<1$ shows that
$f_x(1)=1$ almost everywhere: a nonnegative measurable function has
zero integral only if it vanishes almost everywhere. Modify the field to any
normal state on the exceptional null set and put $\psi_x=f_x$.
For uniqueness, let $(\chi_x)$ be another field of normal states giving
the same representation. Both fields are integrable, since their norms
are one. They induce the same functional on $A_\mu$, so the isometry
in Lemma~\ref{lem:predual} gives
\[
 \int_X\|\psi_x-\chi_x\|\,d\mu(x)
   =\|F_\psi-F_\chi\|=0.
\]
Thus $\psi_x=\chi_x$ outside one null set as functionals, not merely
almost everywhere separately for each chosen operator.
\end{proof}

\section{Proof of sequential \texorpdfstring{weak$^*$}{weak*} continuity}\label{sec:continuity}

For a given sequence, it suffices to work in the von Neumann algebra
it generates. The following well known decomposition (see \cite[Corollary 2.4]{Sh}) puts us directly in
the setting of Theorem~\ref{thm:product}, but with the indexing set $I$ a subset of the unit interval $[0,1]$.

\begin{lemma}[Countably generated reduction]\label{lem:countable-reduction}
Let $N$ be a countably generated von Neumann algebra. Then \[
 N\cong\bigoplus_{t\in I} \, N_t       \qquad I\subseteq[0,1]
\]
where the $N_t$ are nonzero separably acting von Neumann algebras. This is the usual von Neumann algebra direct sum.
Equivalently, there are mutually orthogonal nonzero central projections
$(z_t)_{t\in I}$ in $N$ with supremum $1$ such that $N_t=z_tN$ is separably
acting (that is, representable as a unital von Neumann subalgebra of
$\mathcal B(\ell^2)$). Consequently every $N_t$ is $\sigma$-finite. 
\end{lemma}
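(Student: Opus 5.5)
The plan is to reduce to a countably generated abelian center and decompose along its maximal ideal space. Let $N$ be generated by $a_1,a_2,\dots$ and first represent $N$ faithfully and normally. The key point is to find a family of central projections summing to $1$ on each of which $N$ becomes separably acting. We start from a maximal orthogonal family of cyclic projections for $N'$, that is, projections $[N\xi_i]$ onto cyclic subspaces of $N$, which sum to $1$. On each $[N\xi]$ the algebra $N$ acts separably, since $N\xi$ is the closure of the span of $w\xi$ for $w$ ranging over the countably many noncommutative $*$-polynomials with rational coefficients in the generators. This does not yet make $z_tN$ separably acting, because the central support of $[N\xi]$ need not carry a faithful separable representation. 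So we refine using the center.

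Let $Z=Z(N)$. The von Neumann algebra generated by $a_n$ and $N'$ is all of $\mathcal B(H)$ restricted appropriately, but what we actually need is that each $\sigma$-finite piece is countably generated. Our concrete route is as follows. Take a maximal orthogonal family $(z_t)$ of central projections such that $z_tN$ is $\sigma$-finite; by maximality and the existence of faithful normal states on central supports of vector states, $\sum_t z_t=1$. Then $z_tN$ is $\sigma$-finite and countably generated (by the $z_ta_n$). By the standard fact, its standard form via a faithful normal state $\omega$ is on $L^2(z_tN,\omega)$, which is separable because the rational $*$-polynomials in $z_ta_n$ are strongly dense and hence give a dense countable subset of $L^2$. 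So each $N_t=z_tN$ is separably acting, and $N\cong\bigoplus_t N_t$ coordinatewise, since the $z_t$ are orthogonal central projections with supremum $1$.

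It remains to bound the cardinality of the index set by the continuum, so that we may take $I\subseteq[0,1]$. For this we consider the map $t\mapsto (z_t a_n z_t)$, or more usefully the map sending $t$ to the isomorphism class of $N_t$ together with the images of the generators. The cleaner argument uses the fact that a countably generated von Neumann algebra has density character at most the continuum in any reasonable sense: the abelian algebra $\ell^\infty(T)\cong\{\sum\lambda_tz_t\}\subseteq N$ embeds in $N$. But $N$ is the strong closure of a countable set $Q$ of polynomials, so every element is a limit of a net from $Q$. We expect it to be easier to argue that every element of $N$ is determined by the functions $t\mapsto (\text{value of } \omega_t \text{ on } qxq')$, giving an injection of $T$ into $2^{\mathbb N}$. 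Concretely, if $T$ had cardinality exceeding $\mathfrak c$, then two distinct indices $s\ne t$ would have all the same data $\omega_s(z_sq)$ and $\omega_t(z_tq)$ for $q\in Q$. The main obstacle is making this separation rigorous; we would try first to show that $z_s$ lies in the weak$^*$ closure of $Q$ and derive a contradiction by a Baire/Sikorski-type countable-generation argument on the Boolean algebra of central projections. If that fails, we fall back on citing \cite[Corollary 2.4]{Sh}, as the statement itself indicates.
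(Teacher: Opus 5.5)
The paper gives no argument of its own for this lemma; it simply quotes \cite[Corollary~2.4]{Sh}. Your fallback is therefore exactly the paper's treatment. The self-contained argument you sketch, however, has two genuine holes.

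\textbf{First hole: why $\sum_t z_t=1$.} You justify this ``by the existence of faithful normal states on central supports of vector states''. That principle is false in general. In $\mathcal B(H)$ with $H$ nonseparable, a vector state has rank-one support whose central support is $1$, and $\mathcal B(H)$ is not $\sigma$-finite. This step is precisely where countable generation must be used. Here is the fix.
\begin{itemize}
\item Let $Q$ be the countable unital $*$-algebra over $\mathbb Q+i\mathbb Q$ generated by the $a_n$; it is strongly dense in $N$.
\item Let $e\le 1-\sum_t z_t$ be the support of a nonzero normal positive functional.
\item Then $c(e)=\bigvee_{q\in Q}s_l(qe)$, and each $s_l(qe)\sim s_r(qe)\le e$ is $\sigma$-finite.
\item So $c(e)$ is a countable join of $\sigma$-finite projections, hence $\sigma$-finite, contradicting maximality.
\end{itemize}
Also drop the aside about the algebra generated by the $a_n$ and $N'$: it is $Z(N)'$, not $\mathcal B(H)$ unless $N$ is a factor. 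Your proof that each $z_tN$ is separably acting via $L^2(z_tN,\omega)$ is fine.

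\textbf{Second, more serious hole: the bound $|I|\le\mathfrak c$.} You do not prove it. Pigeonholing the data $(\omega_t(z_tq))_{q\in Q}$ is the right start. But two indices with equal data is not yet a contradiction: in $\ell^\infty(T)$ every summand is $\mathbb C$ with identical data. The Baire/Sikorski route you mention is not an argument either. The missing idea is to turn equal data into an isomorphism that the generators forbid.
\begin{itemize}
\item Suppose $\omega_s(z_sq)=\omega_t(z_tq)$ for all $q\in Q$. Since $Q$ is closed under products and adjoints, and $\pi_s(z_sQ)\xi_s$, $\pi_t(z_tQ)\xi_t$ are dense in the respective GNS spaces, the map $\pi_s(z_sq)\xi_s\mapsto\pi_t(z_tq)\xi_t$ extends to a unitary.
\item This unitary intertwines $\pi_s(z_sq)$ and $\pi_t(z_tq)$. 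Conjugating by it gives a normal $*$-isomorphism $\theta:N_s\to N_t$ with $\theta(z_sa_n)=z_ta_n$ for all $n$.
\item The equalizer $\{x\in N:\theta(z_sx)=z_tx\}$ of these two normal unital $*$-homomorphisms $N\to N_t$ is a von Neumann subalgebra containing every $a_n$, hence equals $N$.
\item Taking $x=z_s$ gives $z_t=\theta(z_s)=z_tz_s=0$, a contradiction.
\end{itemize}
So $t\mapsto(\omega_t(z_tq))_{q\in Q}$ injects $I$ into $\mathbb C^{Q}$, a set of cardinality $\mathfrak c^{\aleph_0}=\mathfrak c$, and relabelling gives $I\subseteq[0,1]$.

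With these two repairs your argument becomes a complete, self-contained replacement for the citation.
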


Thus, for the continuity argument, after replacing the ambient algebra
by the algebra generated by the sequence, we may assume it is an
$\ell^\infty$-direct sum, indexed by a subset of $[0,1]$, of
separably acting $\sigma$-finite algebras. 

\begin{proof}[Proof of Theorem~\ref{thm:main}]
Assume (i), and let $a_n\to0$ weak$^*$ in $M$.  By the uniform boundedness principle 
$(a_n)$ is bounded, with constant bound $K$ say. Put
\[
 M_0=W^*(1,a_1,a_2,\ldots)\subseteq M.
\]
Of course $a_n\to0$ weak$^*$
in $M_0$. Also, $\varphi|_{M_0}$ is countably additive on projections,
since orthogonal projection sums in $M_0$ agree with those in $M$.

Apply Lemma~\ref{lem:countable-reduction} to $M_0$ and write
$M_0=\oplus_{t \in I} \, M_t$, where $M_t=z_tM_0$ and $I\subseteq[0,1]$.
By Theorem~\ref{thm:product}, the probability measure
\[
 \mu(S)=\varphi\left(\sum_{t\in S}z_t\right)
 \qquad(S\subseteq I)
\]
is countably additive on $\mathcal P(I)$, and there are normal
states $\psi_t$ on $M_t$ such that
\[
 \varphi(b)=\int_I\psi_t(z_tb)\,d\mu(t)
 \qquad(b\in M_0).
\]
For each fixed $t$, the coordinate map $b\mapsto z_tb$ is normal.
Thus $\psi_t(z_ta_n)\to0$, while
$|\psi_t(z_ta_n)|\leq\|a_n\|\leq K$. 
The usual  dominated convergence theorem therefore gives
\[
 \varphi(a_n)=\int_I\psi_t(z_ta_n)\,d\mu(t)\longrightarrow0.
\]
Applying this to $a_n-a$ proves (iii) for an arbitrary weak$^*$ limit
$a$.

For (iii)$\Rightarrow$(ii), a bounded increasing positive sequence
converges weak$^*$ to its supremum. For (ii)$\Rightarrow$(i), apply
sequential normality to the increasing finite partial sums of an
orthogonal projection sequence.

Finally, in a faithful normal concrete representation, a
weak-operator-convergent sequence is norm bounded by uniform
boundedness. On norm-bounded subsets the weak operator and weak$^*$
topologies agree. Thus weak operator convergence of a sequence implies
weak$^*$ convergence, and (iii) gives sequential weak operator
continuity of $\varphi$. Strong operator convergence implies weak
operator convergence, giving the remaining assertion 
(the converse being obvious)
\end{proof}

\begin{remark}[Positive functionals]
The 
 equivalences in Theorem~\ref{thm:main} also clearly hold for every bounded
(positive) functional. 
\end{remark}

\smallskip

Chetcuti, Hamhalter and Weber show in \cite[Theorem 4.3(ii)]{CHW} that  for a \(\sigma\)-finite von Neumann algebra, a sequence in \(M_{\mathrm{sa}}\) converges \(\sigma\)-strongly exactly when it converges in the sequential order topology. Theorem 1.1 here concerns sequential weak* continuity of countably additive states and imposes no \(\sigma\)-finiteness assumption.

The extension step in the following result is a known 
Gleason type theorem for von Neumann algebras \cite{BW}. 

\begin{corollary}[Gleason corollary]
\label{cor:gleason-sequential}
Suppose that a von Neumann algebra $M$ has no nonzero direct summand of type $I_2$.  Let
\[
              F:\operatorname{Proj}(M)\longrightarrow[0,1]
\]
satisfy $F(0)=0$, $F(1)=1$, and
\[
 F\left(\sum_{n=1}^{\infty}p_n\right)
     =\sum_{n=1}^{\infty}F(p_n)
\]
for every mutually orthogonal sequence $(p_n)$ of projections.  Then
$F$ has a unique extension to a state $\varphi$ on $M$, and this state
is sequentially weak$^*$ continuous.
\end{corollary}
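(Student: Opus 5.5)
The plan is to combine the Bunce--Wright extension theorem \cite{BW} with Theorem~\ref{thm:main}. Since $F$ is countably additive on orthogonal sequences, it is in particular finitely additive, so $F$ is a finitely additive probability measure on $\operatorname{Proj}(M)$. It is also bounded: it takes values in $[0,1]$. The Bunce--Wright theorem (the generalized Gleason theorem for von Neumann algebras with no type $I_2$ direct summand) then supplies a bounded linear functional $\varphi$ on $M$ with $\varphi(p)=F(p)$ for all $p\in\operatorname{Proj}(M)$.

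Next I will check that $\varphi$ is a state. Positivity follows from the spectral theorem. For $a\in M_+$, $a$ is a norm limit of positive linear combinations of its spectral projections, and $\varphi$ is nonnegative on each such combination because $F\geq 0$. Continuity of $\varphi$ then gives $\varphi(a)\geq0$. Also $\varphi(1)=F(1)=1$, so $\varphi$ is a state.

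Uniqueness uses the same density. Any two bounded linear extensions agree on the linear span of the projections. That span is norm dense in $M$, again by the spectral theorem applied to self-adjoint parts. So the extensions agree everywhere, and this needs no continuity beyond boundedness. If one prefers to assert uniqueness only among states, it is immediate anyway, since states are bounded.

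Finally, $\varphi$ restricted to projections is $F$, which is countably additive on mutually orthogonal sequences. So $\varphi$ satisfies condition (i) of Theorem~\ref{thm:main}, and the theorem gives (iii): $\varphi$ is sequentially weak$^*$ continuous.

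The main obstacle is applying \cite{BW} correctly: confirming its hypotheses (bounded, finitely additive, no $I_2$ summand) and that its output is a linear functional. The only real content is the Bunce--Wright step; the remaining steps are the standard spectral-density arguments above.
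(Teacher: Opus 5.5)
Your proposal is correct and follows the paper's proof essentially step for step. Bunce--Wright gives the bounded linear extension, and norm density of finite spectral sums gives both positivity and uniqueness. Theorem~\ref{thm:main} (i)$\Rightarrow$(iii), applied to countable additivity on projections, then yields sequential weak$^*$ continuity.
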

\begin{proof}
The hypothesis on $M$ means precisely that there is no nonzero central
projection $z\in M$ for which $zM$ is of type $I_2$.  Since $F$ takes
values in $[0,1]$ and is countably additive, it is in particular a
bounded finitely additive measure on the projection lattice of $M$.
By the Bunce--Wright theorem \cite[Theorem~B]{BW},
there is a unique bounded linear functional $\varphi$ on $M$
whose restriction to the projections is $F$; thus
\[
    \varphi(p)=F(p)
    \qquad \bigl(p\in\operatorname{Proj}(M)\bigr).
\]
This extension is positive.  Indeed if $a\in M_+$, then finite
spectral sums with nonnegative coefficients converge in norm to $a$ (as in the lines before Proposition \ref{prop:abelian-integral-criterion}, or in its proof).
On such a sum $b=\sum_{j=1}^k\lambda_jp_j$, where the $p_j$ are
mutually orthogonal projections and $\lambda_j\geq0$, one has
\[
                 \varphi(b)=\sum_{j=1}^k\lambda_jF(p_j)\geq0.
\]
Norm continuity then gives $\varphi(a)\geq0$.  Moreover,
$\varphi(1)=F(1)=1$, so $\varphi$ is a state.  The same spectral
approximation also shows directly that a bounded linear functional is
determined by its values on projections, which gives the uniqueness.

Finally, 
this state is countably additive on projections by the displayed countable-additivity identity in the statement.
Theorem~\ref{thm:main} now implies that $\varphi$ is
sequentially weak$^*$ continuous.
\end{proof}

\begin{remark} 1)\ 
Only bounded finite additivity is needed for the Bunce--Wright extension
step. 

2)\  There is of course a large literature on Gleason's problem, even in von Neumann algebras.   See e.g.\ \cite{Hamhalter}
 for a summary of some of this.  Unfortunately some of the Physics  literature on Gleason's theorem sometimes contains unsupported statements.   We have not been able to find a correct published proof of results such as Corollary
\ref{cor:gleason-sequential} beyond special cases such as the  well known $\sigma$-finite case, or cf.\ e.g.\  \cite{Mat} for the semifinite algebra case. 
\end{remark}

\section{The weight case} \label{weigh}

The weight case of our main earlier results  runs into Haagerup's big open problem of whether normality of weights is 
commutatively determined.
This is Problem 1.11 in   \cite{Haag}, which we resolve in the Appendix. (This may be moved to  a companion paper.)  
using several of the results proved here. 
This section develops normality and sequential normality criteria for
weights, together with normal-minorant constructions and GNS
closability arguments.  We will also apply some of these results in \cite{BNcm}. 
 
 Unfortunately it seems that there are significant obstacles  to any attempt to extend the integral representation theorem which was key to our solution to Theorem \ref{thm:main} to weights (even weights which we know in advance are $\sigma$-normal say).   Nonetheless we shall still give several applications of Theorem \ref{thm:main} to weights. 

Throughout, $\Phi:M_+\to[0,\infty]$ is a weight, that is, we are assuming that $\Phi$ is additive, positively
homogeneous (hence monotone), with $\Phi(0)=0$.   Unless another qualification is specified, a weight $\Phi$ is called
\emph{semifinite} when
\[
 \mathfrak m_\Phi
 =\operatorname{span}\{x\in M_+:\Phi(x)<\infty\}
\]
is weak* dense in $M$. There are several equivalent characterizations of semifiniteness that the reader can find in the texts cited in our bibliography. We distinguish this ordinary semifiniteness
from the finite-subprojection property (P) and the finite-value
recovery property ® introduced below. In particular, ordinary
semifiniteness does not by itself imply either of those properties.

 Write
\begin{align*}
{\rm  (H)} \quad&\Phi|_{A_+}\text{ is normal for every unital abelian von Neumann subalgebra} \, A \subseteq M,\\
{\rm  (H_\sigma)} \quad&\Phi|_{A_+}\text{ is $\sigma$-normal for every such } A.
\end{align*}
Note that (H) (resp.\ (H$_\sigma$)) implies  that $\Phi$ is completely (resp.\ countably) additive on projections.

\subsection{The abelian case}  \label{abcas} To understand just a few of the new issues arising in the weight case we first look at the abelian case.   Or rather the abelian case of the appropriate related question, since Haagerup's problem is formulated so as to be trivial in  the abelian case, referring to weights which are normal on every 
abelian von Neumann subalgebra.  

 Let $\mu$ be a localizable measure and let $\Phi$ be a weight on
$M=L^\infty(X,\Sigma,\mu)$ which is countably additive on projections.
Then $\nu(E)=\Phi(\chi_E)$ defines a countably additive measure
$\nu\ll\mu$. Put $I_\nu(f)=\int f\,d\nu$. 
The finite-spectrum approximation hypothesis for a weight $\Phi$ on $M$  is
\begin{equation}\label{eq:FS}\tag{FS}
 \Phi(x)=\sup\{\Phi(s):0\le s\le x,\ s\text{ has finite spectrum}\},
 \qquad x\in M_+.
\end{equation}
The elements $s$ in \eqref{eq:FS} are not required to commute with $x$.
We will also use a stronger variant 
\begin{equation}\tag{FS$_{\rm c}$}\label{eq:FSc}
\Phi(x)=\sup\{\Phi(s):0\le s\le x,\ s\in W^*(1,x),\ s\text{ has finite spectrum}\}.
\end{equation}
This implies \eqref{eq:FS}. 
Every
$\sigma$-normal weight, in particular every normal weight, satisfies
\eqref{eq:FSc} and  \eqref{eq:FS}, as may be seen by the functional calculus
for $x$ applied to the usual increasing simple functions $s_n$, so that $s_n(x)  \uparrow x$.

\begin{proposition}[The abelian integral and sigma-normality criterion]
\label{prop:abelian-integral-criterion}
Let $\mu$ be localizable and let $\Phi$ be a weight on
$M=L^\infty(X,\Sigma,\mu)$ that is countably additive on projections.
With the notations above  the following conditions are equivalent:
\[
 ({\rm FS}) \,  ({\rm or} \, ({\rm FS}_{\rm c}))\quad\Longleftrightarrow\quad
 \Phi=I_\nu\quad\Longleftrightarrow\quad
 \Phi\text{ is $\sigma$-normal}\quad\Longleftrightarrow\quad
 \Phi\text{ is norm lower semicontinuous}.
\]
\end{proposition}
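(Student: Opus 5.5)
We prove the cycle of implications $\Phi=I_\nu\Rightarrow\sigma$-normal $\Rightarrow({\rm FS}_{\rm c})\Rightarrow({\rm FS})\Rightarrow\Phi=I_\nu$, and then attach norm lower semicontinuity to it.

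\emph{$\Phi=I_\nu\Rightarrow\sigma$-normal.} Here $\nu$ is a possibly infinite countably additive measure with $\nu\ll\mu$. Since $\nu\ll\mu$, $I_\nu$ is well defined on $M_+$ with values in $[0,\infty]$. If $0\le f_n\uparrow f$ in $M$, then the representatives increase to $f$ $\mu$-a.e., hence $\nu$-a.e. To see this, note that the supremum in $L^\infty$ of a bounded increasing sequence is the a.e.\ pointwise supremum; this uses localizability, or simply that countably many null sets have null union. The classical monotone convergence theorem for $\nu$ then gives $I_\nu(f_n)\uparrow I_\nu(f)$.

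\emph{$\sigma$-normal $\Rightarrow({\rm FS}_{\rm c})\Rightarrow({\rm FS})$.} This was already noted before the statement: apply the increasing simple functions $s_n(x)\uparrow x$. The second implication is trivial.

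\emph{$({\rm FS})\Rightarrow\Phi=I_\nu$.} This is the main step. For a finite-spectrum $s=\sum_j\lambda_j\chi_{E_j}$ with $\lambda_j\ge0$ and disjoint $E_j$, finite additivity and positive homogeneity give $\Phi(s)=\sum_j\lambda_j\nu(E_j)=I_\nu(s)$. This holds even when some $\nu(E_j)=\infty$, with the convention $0\cdot\infty=0$. Hence (FS) yields
\[
 \Phi(f)=\sup\{I_\nu(s):0\le s\le f,\ s\text{ finite spectrum}\}.
\]
In the abelian algebra every element $s\le f$ automatically lies in $M$, and the supremum of $I_\nu(s)$ over simple $0\le s\le f$ is exactly the definition of $\int f\,d\nu$. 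Here we use that $s\le f$ in $M$ means $s\le f$ $\mu$-a.e., hence $\nu$-a.e., and that modifications on $\mu$-null sets do not affect $\nu$-integrals. So $\Phi=I_\nu$.

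\emph{Norm lower semicontinuity.} For $\sigma$-normal $\Rightarrow$ norm lsc, we use $\Phi=I_\nu$. If $f_n\to f$ in norm, then $f_n\ge f-\|f_n-f\|1$, so
\[
 I_\nu(f_n)\ge I_\nu\bigl((f-\varepsilon_n)_+\bigr),\qquad \varepsilon_n=\|f_n-f\|.
\]
Monotone convergence (or the simple fact that $(f-\varepsilon)_+\uparrow f$ as $\varepsilon\downarrow0$) then gives $\liminf_n\Phi(f_n)\ge\Phi(f)$.

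For norm lsc $\Rightarrow({\rm FS}_{\rm c})$, take the simple functions $s_n(x)\in W^*(1,x)$ with $0\le s_n(x)\le x$ and $\|s_n(x)-x\|\to0$. Then $\Phi(x)\le\liminf_n\Phi(s_n(x))\le\sup_s\Phi(s)\le\Phi(x)$.

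The real obstacle is bookkeeping with infinite values. One must check that $\nu$ is a genuine (possibly non-$\sigma$-finite) measure, which follows from countable additivity on projections, and that the identification of $\Phi$ and $I_\nu$ on simple functions respects $\infty$. Localizability is used only to make $M=L^\infty$ a von Neumann algebra in which monotone suprema are computed pointwise a.e.
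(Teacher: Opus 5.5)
Your proof is correct and takes essentially the same route as the paper. You identify $\Phi$ with $I_\nu$ on finite-spectrum elements, then use (FS) together with the simple-function definition of the integral to get $\Phi=I_\nu$, and use spectral simple-function approximants $s_n\uparrow x$ with $\|s_n-x\|\to0$ for the implications into $({\rm FS}_{\rm c})$. The differences are minor: you check $\sigma$-normality of $I_\nu$ directly by monotone convergence instead of quoting the external lemma the paper cites, and you obtain norm lower semicontinuity from $f_n\ge(f-\varepsilon_n)_+$ plus monotone convergence rather than from Fatou's lemma.
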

\begin{proof} 
As in the proof of \cite[Lemma~3.2]{BGL}, countable additivity  on projections gives: countable additivity of $\nu$, 
$\nu\ll\mu$, $I_\nu$ is well defined on $M_+$, and $I_\nu = \Phi$ 
 on positive simple functions. Taking a 
supremum of positive simple functions $s \leq f$ we have $I_\nu(s) = \Phi(s) \le\Phi(f)$, and $I_\nu(f)\le\Phi(f)$.
Indeed that proof shows $I_\nu = \Phi$ is equivalent to $\sigma$-normality of $\Phi$.

Assume $({\rm FS}_{\rm c})$. For fixed $f\ge0$, the usual lower dyadic
approximants $s_n$ are positive simple
functions in $W^*(1,f)$, with $s_n\uparrow f$. Hence the usual
simple-function formula for the integral gives
\[
 \Phi(f)
 =\sup_{\substack{0\le s\le f\\s\in W^*(1,f),\ s\text{ simple}}}\Phi(s)
 =\sup_{\substack{0\le s\le f\\s\in W^*(1,f),\ s\text{ simple}}}I_\nu(s)
 =I_\nu(f).
\] A similar proof shows that $({\rm FS})$ implies $I_\nu = \Phi$.  Conversely, $\sigma$-normality applied
to these dyadic approximants $(s_n)$ gives $({\rm FS}_{\rm c})$.

Finally, $I_\nu$ is norm lower semicontinuous by Fatou's lemma:
norm convergence in $M$ gives representatives converging pointwise off a common
$\mu$-null, hence $\nu$-null, set.  In the other direction, norm lower
semicontinuity applied to a sequence of increasing simple functions with $s_n\to f$ in norm, together with
monotonicity, gives $$\Phi(f) \leq \lim_n \, \Phi(s_n) \leq \sup_n\Phi(s_n) \leq \Phi(f).$$   Therefore we have 
equality in this string, and therefore have $({\rm FS}_{\rm c})$.
\end{proof}

We can add the following to the equivalences in the last result.  It is the sequential variant of the equivalence 
with (3) in the main theorem of \cite{Haag}.

\begin{lemma}\label{lem:abelian-sequential-lsc}
A weight $\Phi$ on an abelian von Neumann algebra $M$ is
$\sigma$-normal if and only if it is sequentially weak$^*$
lower semicontinuous on $M_+$.
\end{lemma}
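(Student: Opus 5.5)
Sequential weak$^*$ lower semicontinuity will follow from $\sigma$-normality by reducing, via countable additivity, to the commutative integral representation and then applying Theorem~\ref{thm:main} to truncated states; the converse is immediate.

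\emph{Easy direction.} Assume $\Phi$ is sequentially weak$^*$ lower semicontinuous on $M_+$, and let $0\le a_n\uparrow a$. A bounded increasing sequence converges weak$^*$ to its supremum, so $\Phi(a)\le\liminf_n\Phi(a_n)$. Monotonicity gives $\Phi(a_n)\le\Phi(a)$, hence $\Phi(a_n)\uparrow\Phi(a)$, which is $\sigma$-normality.

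\emph{Main direction.} Assume $\Phi$ is $\sigma$-normal on $M=L^\infty(X,\Sigma,\mu)$, where we may take $\mu$ localizable. Then $\Phi$ is countably additive on projections, so by Proposition~\ref{prop:abelian-integral-criterion} we have $\Phi=I_\nu$ with $\nu(E)=\Phi(\chi_E)$ countably additive and $\nu\ll\mu$. The measure $\nu$ may be infinite, and may even take only the values $0,\infty$ on many sets. Let $f_n\to f$ weak$^*$ in $M_+$. The plan is to write $I_\nu$ as a supremum of finite positive functionals, each of which is sequentially weak$^*$ continuous:
\[
 I_\nu(g)=\sup\Big\{\int_E g\wedge k\,d\nu : k\in\mathbb N,\ \nu(E)<\infty\Big\}.
\]
This cannot be used directly, because the truncation $g\mapsto g\wedge k$ is not linear and so does not interact well with weak$^*$ limits. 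I will therefore use linear functionals instead. For $E$ with $\nu(E)<\infty$, the functional $\omega_E(g)=\int_E g\,d\nu$ is a bounded positive functional on $M$. It is countably additive on projections, since the restriction of $\nu$ to $E$ is a finite measure absolutely continuous with respect to $\mu$. By Theorem~\ref{thm:main} and the Remark on positive functionals that follows it, $\omega_E$ is sequentially weak$^*$ continuous, so $\omega_E(f)=\lim_n\omega_E(f_n)\le\liminf_n I_\nu(f_n)$. It remains to check that $I_\nu(f)=\sup_E\omega_E(f)$ over sets of finite $\nu$-measure.

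\emph{Expected obstacle.} The supremum identity is where the work lies, because $\nu$ need not be semifinite. If $\nu$ takes only the values $0$ and $\infty$ on subsets of some set $G$, then the supremum misses the contribution of $f$ on $G$. I would first handle this by splitting $X$. Let $G$ be the "purely infinite part" of $\nu$, meaning a measurable set (essential supremum taken in the measure algebra of $\mu$, which is complete since $\mu$ is localizable) such that $\nu$ is semifinite off $G$ and every subset of $G$ has $\nu$-measure $0$ or $\infty$. Off $G$, semifiniteness of $\nu$ gives the supremum identity by monotone convergence. On $G$, $I_\nu(g)=\infty\cdot\mu$-a.e.-nonvanishing, that is, $I_\nu(g)=\infty$ if and only if $\nu(\{g>0\}\cap G)>0$. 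So I must show: if $\nu(\{f>\delta\}\cap G)>0$, then $I_\nu(f_n)=\infty$ for all large $n$. Put $F=\{f>\delta\}\cap G$. Then $\int_F f_n\,d\mu\to\int_F f\,d\mu\ge\delta\mu(F)$, if $\mu(F)<\infty$ (otherwise use semifiniteness of $\mu$ to shrink $F$, keeping $\nu(F)=\infty$). Hence $\mu(\{f_n>\delta/2\}\cap F)>0$ for large $n$. Since $\nu\ll\mu$ and $\mu$ agree on null sets only in one direction, I need $\nu$-positivity rather than $\mu$-positivity. To get it, I would choose $F$ inside the $\nu$-support part where $\mu$-nonnull subsets are $\nu$-nonnull, using the measure-algebra (projection) version: the set $\{\nu=0\}$-essential union $Z$ gives $\nu$ "faithful" on $G\setminus Z$. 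Passing to $G\setminus Z$ makes $\mu$-positivity imply $\nu(\cdot)=\infty$, which finishes the argument.
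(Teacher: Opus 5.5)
Your easy direction, and your use of the bounded functionals $\omega_E(g)=\int_E g\,d\nu$ (sequentially weak$^*$ continuous by Theorem~\ref{thm:main}), are fine. The treatment of the purely infinite part $G$, however, has a genuine gap. Discarding the $\mu$-essential union $Z$ of the $\nu$-null sets is harmless only if $\nu(Z)=0$, i.e.\ only if $\nu$ has a $\mu$-support. Complete additivity (normality) provides such a support, as in Corollary~\ref{isno}, but a merely $\sigma$-normal weight need not have one.

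Take $M=\ell^\infty(I)$ with $I$ uncountable, $\mu$ counting measure, and $\Phi(f)=0$ or $\infty$ according as the support of $f$ is countable or not (the Example at the end of the Appendix). Then $\nu$ is $0$ on countable sets and $\infty$ otherwise, so $G$ is all of $I$ up to a countable set. Every singleton is $\nu$-null, so $Z=I$ and $G\setminus Z=\emptyset$, although $\nu(I)=\infty$. Your fallback of shrinking $F$ to finite $\mu$-measure while keeping $\nu(F)=\infty$ also fails, since sets of finite $\mu$-measure are finite, hence $\nu$-null. Even the construction of $G$ by a $\mu$-essential supremum is unreliable here: the essential supremum of the sets of finite $\nu$-measure is all of $I$, on which $\nu$ is not semifinite.

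So your argument yields nothing in this example, yet the lemma must show $\Phi(f_n)=\infty$ eventually whenever $f$ has uncountable support. The underlying problem is that weak$^*$ convergence only controls integrals over sets of finite $\mu$-measure. Those integrals cannot detect $\nu$-positivity of $\{f_n>\delta/2\}$ when $\nu$ charges no such set.

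The missing idea is to localize to a $\nu$-$\sigma$-finite set produced by the sequence itself; this makes $G$ and $Z$ unnecessary.
\begin{itemize}
\item If $\liminf_n I_\nu(f_n)=\infty$ there is nothing to prove. Otherwise pass to a subsequence with $I_\nu(f_n)\to L<\infty$.
\item The sets $E_k=\bigcup_{n\le k}\{f_n\ge 1/k\}$ satisfy $\nu(E_k)\le k\sum_{n\le k}I_\nu(f_n)<\infty$ and increase to $S=\bigcup_n\{f_n>0\}$.
\item Test weak$^*$ convergence against $\mathbf{1}_F$ for $F\subseteq X\setminus S$ with $\mu(F)<\infty$. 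With semifiniteness of $\mu$, this gives $f=0$ $\mu$-a.e., hence $\nu$-a.e., off $S$.
\item Monotone convergence and your functionals $\omega_{E_k}$ then give $I_\nu(f)=\sup_k\omega_{E_k}(f)\le L$.
\end{itemize}

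This is the paper's argument in measure-theoretic form. The paper sets $b=\sum_n2^{-n}a_n$, which has finite weight by $\sigma$-normality. It uses the finite-weight spectral projections $q_k=\mathbf{1}_{[1/k,\infty)}(b)\uparrow s(b)$ and the identity $s(b)a=a$. It then applies Theorem~\ref{thm:mainc} to the bounded functionals $y\mapsto\Phi(q_ky)$.
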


\begin{proof}
Sequential weak$^*$ lower semicontinuity implies
$\sigma$-normality by monotonicity, since a bounded increasing
sequence converges weak$^*$ to its supremum.

Conversely, suppose that $\Phi$ is $\sigma$-normal, and let
$a_n\to a$ weak$^*$ in $M_+$. We must show that
$\Phi(a)\le\liminf_n\Phi(a_n)$. This is automatic if the
liminf is infinite. Otherwise, passing to a subsequence,
we may assume that
\[
    \Phi(a_n)\longrightarrow L<\infty,
    \qquad \sup_n\Phi(a_n)<\infty.
\]
The sequence $(a_n)$ is norm bounded, so $b=\sum_{n=1}^{\infty}2^{-n}a_n$ 
converges in norm. By $\sigma$-normality, $\Phi(b)=\sum_{n=1}^{\infty}2^{-n}\Phi(a_n)<\infty.$ 
Let $p=s(b)$ and $q_k=1_{[1/k,\infty)}(b)$. Then
$q_k\uparrow p$ and $\Phi(q_k)\le k\Phi(b)<\infty.$ 
Since $a_n\le 2^n b$, we have $pa_n=a_n$, and weak$^*$
convergence gives $pa=a$.

For each $k$, the map
\[
    \omega_k(y)=\Phi(q_k y),\qquad y\in M_+,
\]
extends to a bounded positive $\sigma$-normal functional
on $M$. Theorem~\ref{thm:mainc}, applied to its associated
finite measure, shows that $\omega_k$ is sequentially
weak$^*$ continuous. Consequently,
\[
    \Phi(q_k a)
      =\lim_{n\to\infty}\Phi(q_k a_n)
      \le\lim_{n\to\infty}\Phi(a_n)=L.
\]
Finally, commutativity gives $q_k a\uparrow a$, so
$\sigma$-normality yields $\Phi(a)=\sup_k\Phi(q_k a)\le L.$
\end{proof}

We now discuss the additional step from $\sigma$-normality to normality:

\begin{corollary} \label{isno} Let $\mu$ be localizable and let $\Phi$ be a weight on
$M=L^\infty(X,\Sigma,\mu)$ satisfying  any one of the equivalent conditions
in Proposition {\rm \ref{prop:abelian-integral-criterion}}.
Then $\Phi$ is normal if and only if $\Phi$  is completely additive on projections.  \end{corollary}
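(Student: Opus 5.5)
The forward implication is immediate: a normal weight preserves suprema of bounded increasing nets. If $(p_i)_{i\in I}$ is an orthogonal family of projections, the finite partial sums $\sum_{i\in F}p_i$ form an increasing net with supremum $\sum_i p_i$. Normality then gives $\Phi(\sum_i p_i)=\sup_F\sum_{i\in F}\Phi(p_i)=\sum_i\Phi(p_i)$.

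For the converse, I would use Proposition~\ref{prop:abelian-integral-criterion} to write $\Phi=I_\nu$, with $\nu(E)=\Phi(\chi_E)$ a countably additive measure, $\nu\ll\mu$. Normality of $\Phi$ means that $\Phi(\sup_i f_i)=\sup_i\Phi(f_i)$ for every bounded increasing net $0\le f_i\uparrow f$ in $M$. I would reduce this to a statement about the measure $\nu$ on the measure algebra of $\mu$. The plan is to show that complete additivity on projections makes $\nu$ \emph{completely additive} (equivalently, $\tau$-additive / order continuous) on the complete Boolean algebra $\Sigma/\mathcal N_\mu$. Since $\mu$ is localizable, that algebra is complete and $\operatorname{Proj}(M)$ is identified with it. By a Zorn exhaustion, order continuity for upward-directed families of classes $[E_i]\uparrow[E]$ follows from complete additivity on orthogonal families. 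Choose a maximal orthogonal family of classes $[G_j]$ with each $[G_j]$ below some $[E_i]$. Its supremum must be $[E]$, since otherwise a nonzero remainder meets some $E_i$ and contradicts maximality. Complete additivity then gives $\nu(E)=\sum_j\nu(G_j)$, and each finite subsum is at most $\sup_i\nu(E_i)$, using directedness to find a single $E_i$ dominating finitely many $G_j$.

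Next I would pass from projections to functions via a layer-cake argument. For $0\le f_i\uparrow f$ in $M$ and $t>0$, the classes $[\{f_i>t\}]$ increase. Their supremum in the measure algebra is $[\{f>t\}]$, because in $M$ the net $f_i\uparrow f$ is an order supremum. Concretely, if $[G]$ is an upper bound of all $[\{f_i>t\}]$, then $f_i\le t$ a.e.\ off $G$ for each $i$, so $f\chi_{G^c}+\|f\|\chi_G$ is an upper bound for the net and hence dominates $f$. This gives $f\le t$ off $G$, i.e.\ $[\{f>t\}]\le[G]$. Order continuity of $\nu$ then gives $\nu(\{f_i>t\})\uparrow\nu(\{f>t\})$ for each $t$. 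The formula $\int f\,d\nu=\int_0^{\|f\|}\nu(\{f>t\})\,dt$ (valid for possibly infinite $\nu$), together with the classical monotone convergence theorem, would then give $\sup_i I_\nu(f_i)=I_\nu(f)$.

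The main obstacle is that the net is uncountable, while monotone convergence in $t$ only handles sequences. To get around this, I would take a countable dense set of levels $t_k$ (rationals). For a given target value $c<I_\nu(f)$, first choose finitely many levels and a simple function $s=\sum_k \delta\,\chi_{\{f>t_k\}}\le f$ with $I_\nu(s)>c$, using $\sigma$-normality of $\Phi$ (dyadic approximants). Then, for these finitely many $k$, pick $i$ with $\nu(\{f_i>t_k\})$ close to $\nu(\{f>t_k\})$. When these values are infinite, pick $i$ making them large instead. The relevant sets are nested in $k$, so directedness gives one index working for all $k$. This yields $I_\nu(f_i)\ge\sum_k\delta\,\nu(\{f_i>t_k\})>c$ up to $\varepsilon$, completing the argument. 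I would also need to handle the infinite-valued case in the displayed sup identity with some care.
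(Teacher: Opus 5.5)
Your argument is correct, but it takes a different route from the paper. The paper uses complete additivity only once. A maximal orthogonal family of nonzero $\Phi$-null projections has sum $q$ with $\Phi(q)=0$, and commutativity puts every $\Phi$-null projection under $q$. So $1-q=\chi_S$ gives a $\mu$-support $S$ for $\nu$: $\nu$ vanishes off $S$ and is strictly positive on every subset of $S$ of positive $\mu$-measure. Normality of $I_\nu$ is then quoted from \cite[Lemma~4.3(1)]{BGL}.

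You instead use complete additivity, through a Zorn exhaustion subordinate to the directed family, to prove order continuity of $\nu$ on the complete measure algebra. That is, $\Phi$ is normal on increasing nets of projections. You then pass to elements through the level sets $\{f_i>k\delta\}\uparrow\{f>k\delta\}$ and the lower approximants $\sum_k\delta\chi_{\{g>k\delta\}}\le g$. Only finitely many levels occur, so directedness handles the uncountable net. The layer-cake formula in your third paragraph is therefore unnecessary.

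The paper's proof is shorter and isolates the structural point: the only obstruction to normality of $I_\nu$ is the absence of a $\mu$-support. But it outsources the real work to an external lemma. Yours is self-contained. It also shows directly that, in the abelian case, order continuity on projections plus finite-spectrum lower approximation already forces normality.

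There are two small repairs to make.
\begin{enumerate}
\item In identifying $\sup_i[\{f_i>t\}]=[\{f>t\}]$, the bound $f\chi_{G^c}+\|f\|\chi_G$ dominates $f$ trivially and gives nothing. Use $t\chi_{G^c}+\|f\|\chi_G$ instead. It bounds every $f_i$, since $f_i\le t$ a.e.\ off $G$ and $f_i\le f\le\|f\|$. Hence it dominates $f$, which gives $f\le t$ a.e.\ off $G$.
\item Take the levels to be $t_k=k\delta$. Then $\sum_k\delta\chi_{\{g>t_k\}}\le g$ holds for $g=f_i$ as well as for $g=f$, and your final lower bound on $I_\nu(f_i)$ needs exactly this.
\end{enumerate}
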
 

\begin{proof} 
We have $\Phi=I_\nu$. Choose a maximal orthogonal family of nonzero
$\Phi$-zero projections and let $q$ be its sum. Complete 
additivity on projection gives $\Phi(q)=0$. If $\Phi(e)=0$ for a projection $e$,
then, by commutativity, $e(1-q)$ is a zero-weight projection orthogonal
to the whole family. Maximality forces $e(1-q)=0$, so $e\le q$.
Consequently if $1-q=\chi_S$ then $S$ represents a $\mu$-support for $\nu$:
$\nu$ vanishes off $S$ and is strictly positive on every measurable
$F\subseteq S$ with $\mu(F)>0$. By \cite[Lemma~4.3(1)]{BGL},
$I_\nu=\Phi$ is normal.   The converse is obvious (or see the start of \cite[Section 3]{BGL}). \end{proof}

Because of Haagerup's example in  \cite[Remark 1.12]{Haag}, a (semifinite) weight which is completely additive on projections, even in the abelian case $M = l^\infty$, need not be $\sigma$-normal, let alone normal. 
Thus we cannot replace the equivalent conditions in Proposition {\rm \ref{prop:abelian-integral-criterion}} or the
corollary with semifiniteness (or `orthogonal' or `strong' semifiniteness \cite{GLbook1}).  However there are some  easy solutions of Haagerup's  problem 1.11 and its sequential variants in special cases.

\subsection{Back to the general case}

Now that we understand the abelian case, we consider weights  on a von Neumann algebra $M$ that are countably (resp.\ completely) additive on projections, indeed which are $\sigma$-normal (resp.\ normal) on abelian von Neumann subalgebras.   
The second of these is Haagerup's Problem 1.11.

Considering the abelian example above, one is led to the following variant of Haagerup's question.

\smallskip 

{\em Question:} Is a weight on a von Neumann algebra $\sigma$-normal provided its restriction to every commutative von Neumann subalgebra is $\sigma$-normal (equivalently, by Lemma~\ref{lem:abelian-sequential-lsc},
sequentially weak* lower semicontinuous)?

\begin{proposition} 
\label{prop:H-projection-spectral}
For any weight on any von Neumann algebra,
\begin{align*}
 (H_\sigma)&\Longleftrightarrow
 \text{countably  additive on projections}+({\rm FS}_c),\\
 (H)&\Longleftrightarrow
 \text{completely additive on  projections}+({\rm FS}_c).
\end{align*}
\end{proposition}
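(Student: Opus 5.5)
Both equivalences are proved by restricting everything to abelian subalgebras and then applying Proposition~\ref{prop:abelian-integral-criterion} and Corollary~\ref{isno} there. In each direction the key observation is that (FS$_{\rm c}$) only involves elements of $W^*(1,x)$, which is abelian.

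\emph{Forward direction.} Assume (H$_\sigma$). Countable additivity on projections follows as already noted after the definition. If $(p_n)$ is an orthogonal sequence, then $A=W^*(1,p_1,p_2,\dots)$ is abelian, and $\sigma$-normality of $\Phi|_{A_+}$ applied to the partial sums gives the identity. For (FS$_{\rm c}$), fix $x\in M_+$ and put $A=W^*(1,x)$. The lower dyadic approximants $s_n$ of $x$ lie in $A$, have finite spectrum, and satisfy $s_n\uparrow x$. Hence $\sigma$-normality of $\Phi|_{A_+}$ gives $\Phi(x)=\sup_n\Phi(s_n)$, which is exactly (FS$_{\rm c}$). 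The same argument with (H) gives complete additivity, using the abelian algebra generated by an arbitrary orthogonal family and normality applied to the net of finite partial sums, together with (FS$_{\rm c}$).

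\emph{Reverse direction.} Assume $\Phi$ is countably additive on projections and satisfies (FS$_{\rm c}$), and let $A\subseteq M$ be a unital abelian von Neumann subalgebra. Then $A\cong L^\infty(X,\Sigma,\mu)$ with $\mu$ localizable, by the standard representation of abelian von Neumann algebras. The restriction $\Phi|_{A_+}$ is a weight on $A$, and it is countably additive on projections of $A$ because sums of orthogonal projections in $A$ agree with those in $M$. The crucial point is that (FS$_{\rm c}$) passes to $A$ verbatim. For $x\in A_+$ the approximating elements $s$ range over $W^*(1,x)\subseteq A$, so the supremum defining (FS$_{\rm c}$) is the same whether it is computed in $M$ or in $A$. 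Proposition~\ref{prop:abelian-integral-criterion} therefore shows that $\Phi|_{A_+}=I_\nu$ is $\sigma$-normal, which proves (H$_\sigma$). If in addition $\Phi$ is completely additive on projections, then $\Phi|_{A_+}$ is too. Corollary~\ref{isno}, whose hypothesis is now satisfied, then gives normality of $\Phi|_{A_+}$, which proves (H).

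\emph{Main obstacle.} The only delicate point is this transfer of (FS$_{\rm c}$) to $A$. It works precisely because the approximants are required to lie in $W^*(1,x)$; the weaker (FS) would not obviously restrict, since its approximants need not commute with $x$ or lie in $A$. A minor check is that the identification $A\cong L^\infty(\mu)$ carries projections and finite-spectrum elements to indicator functions and simple functions, so the hypotheses of Proposition~\ref{prop:abelian-integral-criterion} and Corollary~\ref{isno} are met literally.
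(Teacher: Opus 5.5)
Your proposal is correct and takes essentially the same route as the paper. The forward direction restricts to abelian subalgebras (dyadic approximants in $W^*(1,x)$, and the algebra generated by an orthogonal family). The reverse direction uses that projection additivity and (FS$_{\rm c}$) pass to any unital abelian $A$ because $W^*(1,x)\subseteq A$, represents $A$ as $L^\infty$ of a localizable measure, and applies Proposition~\ref{prop:abelian-integral-criterion} and Corollary~\ref{isno}.
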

\begin{proof}
The forward implications are obvious from the above, and the fact that $({\rm FS}_c)$ is
determined on abelian von Neumann subalgebras.   Conversely, restrict the
weight to a unital abelian von Neumann subalgebra $A$. Additivity on projections is a hereditary property. 
Similarly $({\rm FS}_c)$ is  hereditary too, since
$W^*(1,x)\subseteq A$ for $x\in A_+$. Represent $A$ as an $L^\infty$
space of a localizable measure. 
Proposition~\ref{prop:abelian-integral-criterion} gives
$\sigma$-normality under countable additivity on projections.
Corollary \ref{isno} gives normality under complete
projection additivity. \end{proof}

The following criterion is related to `strict semifiniteness'.  
We say that a weight $\Phi$ on a von Neumann algebra $M$ 
has an \emph{strict decomposition} or is {\em strictly semifinite} if there are bounded positive functionals
$(\rho_i)_{i\in I}\subset M^{*+}$ and mutually orthogonal projections
$(e_i)_{i\in I}$ such that
\begin{equation} \label{eq:SD-three}
 \sum_i e_i=1,\qquad \rho_i(1-e_i)=0,\qquad
 \Phi(x)=\sum_i\rho_i(x)\quad(x\in M_+).
\end{equation}

It is well known that for $x\in M_+$, positivity and $\rho_i(1-e_i)=0$ imply
$\rho_i(x)=\rho_i(e_i x e_i)$.  (This follows for example from Choi's multiplicative domain result.)
Moreover, if $j\ne i$, then
$e_i\le1-e_j$, so $\rho_j(e_i)=0$.  Consequently
\[
 \Phi(e_i)=\rho_i(e_i)=\rho_i(1)<\infty,
 \qquad
 \Phi(e_i x e_i)=\rho_i(x),
\]
and hence
\begin{equation} \label{eq:SD-corners-three}
 \Phi(x)=\sum_i\Phi(e_i x e_i),\qquad x\in M_+.
\end{equation}

For comparison, in the usual normal-weight setting, strict
semifiniteness is expressed by a sum of bounded normal positive
functionals with mutually orthogonal support projections. Such a sum
gives the decomposition above, after adjoining the complementary
zero-weight projection if necessary.

\begin{theorem}[Strict decomposition criteria]\label{thm:three-strict-decomposition}
Let $\Phi$ be a weight on a von Neumann algebra $M$ with a strict decomposition 
as in {\rm (\ref{eq:SD-three})}.
\begin{enumerate}[label=\textup{(\roman*)}]
\item If $\Phi$ is countably additive on projections, then it is
$\sigma$-normal.
\item If $\Phi$ is completely additive on projections, then it is normal.
\end{enumerate} \end{theorem}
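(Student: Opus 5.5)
The plan is to reduce both parts to the bounded case on each corner $e_iMe_i$, using $\Phi(e_i)=\rho_i(1)<\infty$, and then to pass to the sum in \eqref{eq:SD-corners-three} by monotone convergence for series.

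\emph{Step 1 (the corner functionals).} Fix $i$ and set $\omega_i(x)=\Phi(e_ixe_i)=\rho_i(x)$ for $x\in M_+$. This is the restriction to $M_+$ of the bounded positive functional $\rho_i$, so $\omega_i$ extends to a bounded positive functional on $M$. I claim $\omega_i$ inherits the additivity hypothesis on projections. Let $(p_n)$ be mutually orthogonal projections in $M$. The elements $e_ip_ne_i$ need not be projections, so additivity of $\Phi$ cannot be applied to them directly. Instead I will consider the corner algebra $e_iMe_i$ and the functional $\Phi|_{e_iMe_i}$, which is bounded there and equal to $\rho_i$. For orthogonal projections $q_n\le e_i$, the hypothesis on $\Phi$ gives $\Phi(\sum q_n)=\sum\Phi(q_n)$. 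Hence $\rho_i|_{e_iMe_i}$ is a positive functional on the von Neumann algebra $e_iMe_i$ that is countably (resp.\ completely) additive on projections. For (ii), the standard fact from Section~\ref{sec:prelim} (complete additivity is equivalent to normality) makes $\rho_i|_{e_iMe_i}$ normal. For (i), Corollary~\ref{cor} (or Theorem~\ref{thm:main} with the remark on positive functionals) makes it $\sigma$-normal. Since $x\mapsto e_ixe_i$ is normal and order preserving from $M$ to $e_iMe_i$, the composite $\omega_i(x)=\rho_i(e_ixe_i)$ is normal (resp.\ $\sigma$-normal) on $M_+$.

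\emph{Step 2 (summation).} Let $0\le x_\lambda\uparrow x$ be a bounded increasing net (resp.\ sequence) in $M_+$. By \eqref{eq:SD-corners-three},
\[
\Phi(x)=\sum_i\omega_i(x)=\sum_i\sup_\lambda\omega_i(x_\lambda)=\sup_\lambda\sum_i\omega_i(x_\lambda)=\sup_\lambda\Phi(x_\lambda).
\]
Interchanging a supremum of an increasing net with a sum of nonnegative terms is legitimate: both sides equal the supremum over finite subsets $F\subseteq I$ and indices $\lambda$ of $\sum_{i\in F}\omega_i(x_\lambda)$, using that each finite partial sum is increasing in $\lambda$. This gives normality in case (ii) and $\sigma$-normality in case (i).

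\emph{Main obstacle.} The only delicate point is Step~1: checking that the additivity hypothesis really transfers to $\rho_i$ on the corner. It does, because projections of $e_iMe_i$ are projections of $M$, and $\Phi=\rho_i$ on $(e_iMe_i)_+$ by the identity $\Phi(e_ixe_i)=\rho_i(x)$ derived before the theorem. This corner reduction is what avoids the Haagerup-type counterexample: boundedness on each corner lets Theorem~\ref{thm:main} and the $\sigma$-finite-free normality criterion apply.
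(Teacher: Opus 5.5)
Your proposal is correct and follows essentially the same route as the paper: restrict $\Phi$ to each corner $e_iMe_i$, where it is bounded and inherits countable (resp.\ complete) additivity on projections, and apply Theorem~\ref{thm:main} (resp.\ the classical complete-additivity criterion) to get $\sigma$-normality (resp.\ normality). You then compose with the normal map $x\mapsto e_ixe_i$ and interchange the supremum with the sum in \eqref{eq:SD-corners-three}, exactly as in the paper.
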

\begin{proof}
Put $\lambda_i=\Phi|_{(e_iMe_i)_+}$.  Since
$\Phi(e_i)<\infty$, $\lambda_i$ extends uniquely to a bounded positive
functional on $e_iMe_i$ \cite{Takesaki}. 
In (i), $\lambda_i$ inherits countable projection additivity from
$\Phi$.   Hence every $\lambda_i$ is $\sigma$-normal by Theorem \ref{thm:main}.  
Since $x\mapsto e_i x e_i$ is normal and positive,
\[
 \omega_i(x):=\lambda_i(e_i x e_i)
\]
is $\sigma$-normal on $M$.  By \eqref{eq:SD-corners-three},
$\Phi=\sum_i\omega_i$.  Therefore, for $0\le x_n\uparrow x$,
\[
 \Phi(x)=\sum_i\sup_n\omega_i(x_n)
        =\sup_n\sum_i\omega_i(x_n)=\sup_n\Phi(x_n),
\]
where both suprema are over nonnegative numbers.  This proves (i).

In (ii) the same argument uses arbitrary orthogonal families: each
$\lambda_i$ is completely additive on projections and hence normal by
the classical characterizarton of normal states.  Thus each
$\omega_i$ is normal.  If $0\le x_\alpha\uparrow x$, then
\[
 \Phi(x)=\sum_i\sup_\alpha\omega_i(x_\alpha)
        =\sup_\alpha\sum_i\omega_i(x_\alpha)=\sup_\alpha\Phi(x_\alpha),
\]
again because finite partial sums and the directed supremum commute for
nonnegative numbers.  Hence $\Phi$ is normal, proving (ii).
\end{proof}

{\bf Remark.} If the weight  $\Phi$ in the last result is faithful and countably additive on projections, 
one may deduce further that it is normal.   Indeed if $(q_j)_{j\in J}$
is any orthogonal family of nonzero projections in $e_iMe_i$,
then
\[
 \lambda_i(q_j)>0,
 \qquad
 \sum_{j\in F}\lambda_i(q_j)\le\lambda_i(e_i)
 \quad(F\subseteq J\text{ finite}).
\]
For each positive integer $m$, only finitely many of these
numbers can be at least $1/m$, so  the set $J$ is countable.
Thus $\lambda_i$ is completely additive on projections and
hence normal.  Now it is easy to see that $\Phi$ is normal \cite{Haag}. 

\bigskip

If one assumes that a weight $\Phi$ on a von Neumann algebra $M$ is {\em sequentially weak* lower semicontinuous} on
$M_+$, then $\sigma$-normality follows immediately.  Indeed, if $0\le x_n\uparrow x$ then $x_n\to x$ strongly, 
hence weak*.  Since $\Phi$ is monotone, $\Phi(x_n)\uparrow L\le \Phi(x)$.
On the other hand, sequential weak* lower semicontinuity gives
$\Phi(x)\le \liminf_{n\to\infty}\Phi(x_n)=L.$
Therefore $\Phi(x_n)\uparrow \Phi(x).$ Thus sequential weak*  lower semicontinuity on $M_+$ implies
$\sigma$-normality.

\subsection{Normal projection detection}
Put
\[
\mathcal D_n(\Phi)=\{\omega\in M_*^+:\omega\le\Phi\}.
\]
We say that $\Phi$ has \emph{normal projection detection} or has property $(F^n)$, if
\begin{equation}\tag{$F^n$}\label{eq:Fn-three}
\Phi(p)=\sup_{\omega\in\mathcal D_n(\Phi)}\omega(p),\qquad p\in\operatorname{Proj}(M).
\end{equation}

\begin{theorem} 
\label{thm:normal-projection-detection}
Let $\Phi$ be a weight on any von Neumann algebra. The following are
equivalent:
\begin{enumerate}[label=\textup{(\roman*)}] 
\item $\Phi$ is normal.
\item $\Phi$ satisfies $(\mathrm{FS}_{\rm c})$ and  $(F^n)$.
\item $\Phi$ satisfies $(\mathrm{FS})$ and $(F^n)$.
\end{enumerate} \end{theorem}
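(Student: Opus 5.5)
The implications (i)$\Rightarrow$(ii)$\Rightarrow$(iii) are the easy direction. If $\Phi$ is normal, Haagerup's theorem (\cite{Haag}) gives $\Phi(x)=\sup\{\omega(x):\omega\in\mathcal D_n(\Phi)\}$ for every $x\in M_+$. Specializing to projections yields $(F^n)$. We already noted before Proposition~\ref{prop:abelian-integral-criterion} that every normal weight satisfies $(\mathrm{FS}_{\rm c})$: apply normality to the dyadic spectral approximants $s_n(x)\uparrow x$. Since $(\mathrm{FS}_{\rm c})$ implies $(\mathrm{FS})$, (ii)$\Rightarrow$(iii) is immediate.

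For (iii)$\Rightarrow$(i) I would introduce the normal minorant envelope
\[
\Psi(x)=\sup_{\omega\in\mathcal D_n(\Phi)}\omega(x),\qquad x\in M_+ .
\]
As a supremum of normal positive functionals, $\Psi$ is $\sigma$-weakly lower semicontinuous and satisfies $\Psi\le\Phi$, and $(F^n)$ says $\Psi=\Phi$ on projections. The aim is $\Phi=\Psi$ on all of $M_+$. Then $\Phi$ is a $\sigma$-weakly lower semicontinuous weight, and Haagerup's theorem gives normality. By $(\mathrm{FS})$ it suffices to prove $\Phi(s)\le\Psi(s)$ for every finite-spectrum $0\le s$.

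Write such an $s$ as $s=\sum_{j=1}^k\lambda_jp_j$ with orthogonal spectral projections $p_j$ and $\lambda_j>0$. Additivity gives $\Phi(s)=\sum_j\lambda_j\Phi(p_j)=\sum_j\lambda_j\Psi(p_j)$. For $\varepsilon>0$, $(F^n)$ then supplies $\omega_j\in\mathcal D_n(\Phi)$ with $\omega_j(p_j)\ge\Phi(p_j)-\varepsilon$. To conclude we need a single $\omega\in\mathcal D_n(\Phi)$ with $\omega(p_j)\ge\omega_j(p_j)-\varepsilon$ for all $j$ simultaneously. Then $\Psi(s)\ge\omega(s)\ge\Phi(s)-C\varepsilon$.

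Producing this single $\omega$ is the main obstacle: one must show that $\mathcal D_n(\Phi)$ is (approximately) upward directed, at least on a finite family of orthogonal projections. My first attempt would adapt the Combes--Haagerup directedness argument, using the resolvent-type transform $\omega\mapsto\omega((1+\cdots)^{-1}\,\cdot\,)$, to the set of normal minorants of an arbitrary weight. That argument uses only the order structure of $\mathcal D_n(\Phi)$ and the inequality $\omega\le\Phi$, not normality of $\Phi$ itself. Failing that, I would try the compressions $x\mapsto\omega_j(p_jxp_j)$ and check whether $(F^n)$ applied to suitable projections in $W^*(p_1,\ldots,p_k)$ forces their sum to remain below $\Phi$ up to $\varepsilon$. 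If neither works, the fallback is to note that $\Psi$ is additive on the abelian algebra $W^*(1,s)$, because it agrees with the countably additive set function $\Phi$ on its projections, and then to apply Proposition~\ref{prop:abelian-integral-criterion} fibrewise.
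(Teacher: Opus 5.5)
\textbf{There is a genuine gap.} Your easy implications, the envelope $\Psi$, and the reduction via (FS) to finite-spectrum $s$ all match the paper. But the step you flag as the main obstacle, a single $\omega\in\mathcal D_n(\Phi)$ working for all $p_j$ at once, is left open, and none of your three routes closes it.

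\emph{Route 1.} Directedness of $\mathcal D_n(\Phi)$ for a weight not yet known to be normal is an unverified claim. The Combes-type directedness concerns strict minorants of a lower semicontinuous weight and runs through its GNS construction. In any case it is far heavier than what is needed.

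\emph{Route 2.} The compression route fails outright: $\omega'(x)=\sum_j\omega_j(p_jxp_j)$ need not be dominated by $\Phi$, because the pinching $x\mapsto\sum_jp_jxp_j$ does not decrease $\Phi$. On $M_2$ take $\Phi=\omega_\xi$ with $\xi=(e_1+e_2)/\sqrt2$, $p_j=e_{jj}$, $\omega_j=\Phi$, and $x$ the projection onto $\mathbb C(e_1-e_2)$. Then $\Phi(x)=0$ but $\omega'(x)=1/2$. Membership in $\mathcal D_n(\Phi)$ requires exact domination, so ``below $\Phi$ up to $\varepsilon$'' does not help.

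\emph{Route 3.} This route is circular. $\Psi$ is a supremum of linear functionals, hence only subadditive. Agreement with $\Phi$ on projections does not by itself give $\Psi(\sum_j\lambda_jp_j)=\sum_j\lambda_j\Psi(p_j)$ for distinct $\lambda_j$, and that is exactly what has to be proved. Nor can Proposition~\ref{prop:abelian-integral-criterion} be applied to a map not known to be additive.

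\textbf{The missing idea.} Apply $(F^n)$ once, to the single projection $p=\sum_jp_j=\supp(s)$.

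If $\Phi(s)<\infty$, then $\Phi(p)\le\delta^{-1}\Phi(s)<\infty$ with $\delta=\min_j\lambda_j$. For every $\omega\in\mathcal D_n(\Phi)$,
\[
\Phi(p)-\omega(p)=\sum_j\bigl(\Phi(p_j)-\omega(p_j)\bigr),
\]
and every summand is nonnegative because $\omega\le\Phi$. So one $\omega$ that is $\varepsilon$-optimal at $p$ is automatically $\varepsilon$-optimal at every $p_j$ simultaneously. Hence $\Phi(s)-\omega(s)\le\|s\|\bigl(\Phi(p)-\omega(p)\bigr)$. The paper phrases this as positivity of $\Phi-\omega$ on $pMp$ together with $s\le\|s\|p$.

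If $\Phi(s)=\infty$, some $\Phi(p_j)=\infty$, and monotonicity gives $\Psi(s)\ge\lambda_j\Psi(p_j)=\lambda_j\Phi(p_j)=\infty$. You must treat this case separately in any event, since your $\omega_j$ with $\omega_j(p_j)\ge\Phi(p_j)-\varepsilon$ make no sense when $\Phi(p_j)=\infty$.

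With this observation no directedness of $\mathcal D_n(\Phi)$ is needed, and your outline becomes the paper's proof.
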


\begin{proof} We already observed after its definition that $\sigma$-normality implies $({\rm FS}_{\rm c})$.
Normality implies 
$(F^n)$ by (4) in Haagerup's main theorem in \cite{Haag}.
Since $({\rm FS}_{\rm c})$ implies $(\mathrm{FS})$, it remains to
prove that normality follows from  $(\mathrm{FS})+(F^n)$.
Define $\Theta(x)=\sup_{\omega\in\mathcal D_n(\Phi)}\omega(x)$, which preserves increasing suprema.  Clearly
$\Theta\le\Phi$.  
We first prove equality on every positive
finite-spectrum element.
Write
\[
 s=\sum_{j=1}^m\alpha_jp_j,
 \qquad \alpha_j>0,
\]
where the $p_j$ are mutually orthogonal projections.  Suppose first that $\Phi(s)<\infty$.  If
$p=\supp(s)=\sum_jp_j$ and $\delta=\min_j\alpha_j$, then
$\delta p\le s$, so $\Phi(p)\le\delta^{-1}\Phi(s)<\infty.$ 
For $\omega\in\mathcal D_n(\Phi)$, both restrictions to $pMp$ are
bounded positive functionals and $\omega|_{pMp}\le\Phi|_{pMp}$.
Since $0\le s\le\|s\|p$,
\[
 0\le \Phi(s)-\omega(s)
 \le \|s\|\bigl(\Phi(p)-\omega(p)\bigr).
\]
Condition ($F^n$) at the finite-weight projection $p$ therefore
gives $\Theta(s)=\Phi(s)$.

Now suppose that $\Phi(s)=\infty$.  Since $s$ is a finite positive
linear combination of the $p_j$, at least one $p_j$ has infinite
weight.  For such a $j$, condition ($F^n$) gives
$\sup_{\omega\in\mathcal D_n(\Phi)}\omega(p_j)=\infty.$ 
Hence $\Theta(s)\ge
 \alpha_j\sup_{\omega\in\mathcal D_n(\Phi)}\omega(p_j)=\infty,$ 
and again $\Theta(s)=\Phi(s)$.

Fix $x\in M_+$. By $(\mathrm{FS})$,
\[
 \Phi(x)
 =\sup_{\substack{0\le s\le x\\s\text{ finite-spectrum}}}\Phi(s)
 =\sup_s\Theta(s)
 \le\Theta(x)\le\Phi(x).
\]
Thus $\Phi = \Theta$ on $M_+$, so that $\Phi$ is normal. 
\end{proof}

\begin{corollary} \label{cor:haagerup-projection-detection}
Haagerup's Problem~1.11 is equivalent to asking whether  {\rm (H)} implies
$(F^n)$ for every weight.
\end{corollary}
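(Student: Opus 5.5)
The plan is to show that both formulations say the same thing, namely that (H) alone forces normality, by combining Theorem~\ref{thm:normal-projection-detection} with Proposition~\ref{prop:H-projection-spectral}. Haagerup's Problem~1.11 asks whether every weight satisfying (H) is normal. I will check two implications, ``(H) $\Rightarrow$ normal for all weights'' if and only if ``(H) $\Rightarrow (F^n)$ for all weights''. I expect no case analysis beyond this.

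First, suppose the Problem has a positive answer, so (H) implies normality. By the implication (i)$\Rightarrow$(ii) of Theorem~\ref{thm:normal-projection-detection}, a normal weight satisfies $(F^n)$; equivalently, this is item (4) of Haagerup's main theorem in \cite{Haag}. Hence (H) implies $(F^n)$.

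Conversely, suppose every weight satisfying (H) also satisfies $(F^n)$, and let $\Phi$ satisfy (H). By the forward direction of Proposition~\ref{prop:H-projection-spectral}, $\Phi$ satisfies $({\rm FS}_{\rm c})$. This holds because $({\rm FS}_{\rm c})$ at $x$ only involves $W^*(1,x)$, which is abelian, and normal weights on that algebra satisfy $({\rm FS}_{\rm c})$ by the dyadic approximation remark. By hypothesis $\Phi$ also satisfies $(F^n)$. The implication (ii)$\Rightarrow$(i) of Theorem~\ref{thm:normal-projection-detection} then gives that $\Phi$ is normal.

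The only point needing care is making sure $({\rm FS}_{\rm c})$ really follows from (H) alone, with no extra projection-additivity hypothesis. This is exactly the ``obvious'' forward direction of Proposition~\ref{prop:H-projection-spectral}, since $({\rm FS}_{\rm c})$ is checked inside the abelian algebra $W^*(1,x)$, where $\Phi$ is normal by (H).

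The equivalence can also be read pointwise in the weight: for a single $\Phi$ satisfying (H), $\Phi$ is normal if and only if it satisfies $(F^n)$. This pointwise statement implies the equivalence of the two global questions.
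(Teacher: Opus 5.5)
Your proposal is correct and follows the paper's argument. Condition (H) yields $({\rm FS}_{\rm c})$ because that condition is checked inside the abelian algebra $W^*(1,x)$, so by Theorem~\ref{thm:normal-projection-detection} a weight satisfying (H) is normal if and only if it satisfies $(F^n)$; you simply spell out the two directions that the paper's one-line proof leaves implicit.
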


\begin{proof}
Condition (H) implies $({\rm FS}_{\rm c})$ since the latter is
determined on abelian von Neumann subalgebras.  So the previous theorem applies.
\end{proof}

The implication $(H)\Rightarrow(F^n)$ is established in
\cite{Bd} through the general normality theorem. Thus the
reformulation in Corollary~{cor:haagerup-projection-detection} has an affirmative answer.
 Theorem~\ref{thm:normal-projection-detection} 
remains an independent projection-detection criterion, including its
formulation using $(\mathrm{FS})$ rather than
$(\mathrm{FS}_{\mathrm c})$.

Of course every normal weight has (H) and $(F^n)$.

\medskip

{\bf Remarks.} 1)\ In fact, even $(H_\sigma)+(F^n)$ implies normality, since
(H$_\sigma$) already implies $({\rm FS}_{\rm c})$.   

\smallskip

2)\ Put
\[
 \mathcal D_\sigma(\Phi)
 =\{\omega\in M^{*+}:\omega\text{ is }\sigma\text{-normal},
                         \ \omega\le\Phi\}.
\]
The sequential projection-detection condition $(F^\sigma)$ is
\[
 \Phi(p)=\sup_{\omega\in\mathcal D_\sigma(\Phi)}\omega(p)
 \qquad(p\in\operatorname{Proj}(M)).
\]
The proof of Theorem~6.6, with increasing sequences in place of nets,
shows that $(\mathrm{FS})+(F^\sigma)$ implies $\sigma$-normality.
 Indeed
(FS) and $(F^\sigma)$ imply $\sigma$-normality,
without any projection-additivity assumption.

\bigskip

We now consider two variants of semifiniteness of weights.  We say that a weight has (finite) {\em recovery}, or satisfies (R), if  
\[ \Phi(x)=\sup\{\Phi(y):0\le y\le x,\ \Phi(y)<\infty\},
\qquad x\in M_+.  \tag{R} \]
This variant of  semifiniteness is considered e.g.\ in  II.6.7.5 in \cite{Black}.

We will say that a weight is {\em strongly semifinite} (see \cite{GLbook}, this is sometimes called densely semifinite), if 
\[
 0\ne e\in\operatorname{Proj}(M)\Longrightarrow
 \exists\,0\ne f\le e:\quad f\in\operatorname{Proj}(M),\quad
 \Phi(f)<\infty
 \tag{P}
\]
Since this property will be used a lot here we will sometimes simply refer to  it as the (P)
property.  (The `P' standing for `projection'.)  Strong semifiniteness is equivalent to many other interesting conditions, but we shall
not need these other formulations here. For example, it is very easy to see that it is equivalent to:
for every $0\ne x\in M_+$ there exists
$0\ne y\in M_+$ such that $y\le x$ and $\Phi(y)<\infty$.
For a weight $\Phi$, we recall (from e.g.\ \cite{Takesaki2} or  \cite[II.6.7.3]{Black}) the notation $\mathfrak n_\Phi$ for the left ideal
$\{a\in M:\Phi(a^*a)<\infty\}.$ The weight extends linearly
to the span of its finite-weight positive elements. Polarization gives
the GNS construction
\[
 (\pi_\Phi,H_\Phi,\Lambda_\Phi),\qquad
 \pi_\Phi(b)\Lambda_\Phi(a)=\Lambda_\Phi(ba),\qquad
 \|\Lambda_\Phi(a)\|^2=\Phi(a^*a).
\]
The range of $\Lambda_\Phi$ is dense in $H_\Phi$, and $\pi_\Phi$ is
contractive. 

\subsection{Strong semifiniteness and property {\rm (R)}} 

\begin{lemma}[GNS closedness and lower semicontinuity]
\label{lem:closed-GNS}
For a weight $\Phi$ on a von Neumann algebra $M$, consider
$\Lambda_\Phi:\mathfrak n_\Phi\subseteq M\to H_\Phi$, with the norm
topologies on $M$ and $H_\Phi$.
\begin{enumerate}[label=\textup{(\roman*)}]
\item If $\Phi$ is $\sigma$-normal, then $\Lambda_\Phi$ is closed.
\item If $\Lambda_\Phi$ is closable, then $\Phi$ is norm lower
semicontinuous at every $x\in M_+$ with $\Phi(x)<\infty$.
\item If $\Lambda_\Phi$ is closed, then $\Phi$ is norm lower
semicontinuous on all of $M_+$.
\end{enumerate}
\end{lemma}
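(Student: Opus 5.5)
Parts (ii) and (iii) follow from a weak-compactness argument in $H_\Phi$ applied to square roots, which I expect to be routine. Part (i) hinges on a norm lower semicontinuity estimate for $\sigma$-normal weights, which I have not checked and regard as the main obstacle.

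\emph{Parts (ii) and (iii).} Let $x_n\to x$ in norm in $M_+$. We must show $\Phi(x)\le L:=\liminf_n\Phi(x_n)$, and we may assume $L<\infty$. Pass to a subsequence with $\Phi(x_n)\to L$. Put $a_n=x_n^{1/2}$ and $a=x^{1/2}$. Continuity of the square root on bounded positive sets gives $a_n\to a$ in norm, and $\|\Lambda_\Phi(a_n)\|^2=\Phi(x_n)$ is bounded. Passing to a further subsequence, $\Lambda_\Phi(a_n)\to\xi$ weakly in $H_\Phi$, with $\|\xi\|^2\le L$.

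The closure $\overline{G}$ of the graph of $\Lambda_\Phi$ in $M\times H_\Phi$ is a norm-closed subspace, hence weakly closed. Testing against functionals of the form $(m,\eta)\mapsto f(m)+\langle\eta,\zeta\rangle$ shows that $(a,\xi)$ lies in the weak closure of $\{(a_n,\Lambda_\Phi(a_n))\}$. Hence $(a,\xi)\in\overline G$.
\begin{itemize}
\item Under (ii) we have $\Phi(x)<\infty$, so $a\in\mathfrak n_\Phi$. Closability says $\overline G$ is the graph of an extension of $\Lambda_\Phi$, so $\xi=\Lambda_\Phi(a)$ and $\Phi(x)=\|\xi\|^2\le L$.
\item Under (iii), $\overline G=G$, so $(a,\xi)\in G$ forces $a\in\mathfrak n_\Phi$ with $\Lambda_\Phi(a)=\xi$. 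The same inequality holds at every $x\in M_+$.
\end{itemize}

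\emph{Part (i), reduction.} Let $a_n\in\mathfrak n_\Phi$ with $a_n\to a$ in norm and $\Lambda_\Phi(a_n)\to\xi$. By the weak-limit argument above it suffices to show two things: that $a\in\mathfrak n_\Phi$, and that $\Phi((a-a_m)^*(a-a_m))\to0$. Together these give $\Lambda_\Phi(a_m)\to\Lambda_\Phi(a)$, so $\xi=\Lambda_\Phi(a)$.

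Pass to a subsequence with $\sum_k\|\Lambda_\Phi(a_{k+1}-a_k)\|<\infty$ and $\sum_k\|a_{k+1}-a_k\|<\infty$. Write $b_k=a_{k+1}-a_k$ and $c_m=a-a_m=\sum_{k\ge m}b_k$. Minkowski's inequality in $H_\Phi$ bounds the partial sums:
\[
\Phi(s_N^*s_N)^{1/2}\le\sum_{k\ge m}\|\Lambda_\Phi(b_k)\|,\qquad s_N=\sum_{k=m}^{N}b_k .
\]
Moreover $s_N^*s_N\to c_m^*c_m$ in norm. So everything reduces to \emph{norm lower semicontinuity of a $\sigma$-normal weight along norm-convergent sequences with summable rate}.

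\emph{Part (i), the main obstacle.} This is the step I expect to be hardest, since $\sigma$-normality only controls monotone sequences. My first attempt is the device from Lemma~\ref{lem:abelian-sequential-lsc}:
\begin{itemize}
\item Form $b=\sum_N 2^{-N}s_N^*s_N$ with a suitable weighting. By $\sigma$-normality applied to the partial sums, $\Phi(b)<\infty$.
\item Work with the spectral projections $q_k=1_{[1/k,\infty)}(b)$, which satisfy $\Phi(q_k)\le k\Phi(b)<\infty$.
\item Compress to the corners $q_kMq_k$, where $\Phi$ is bounded. There the compressed weight is a $\sigma$-normal positive functional, hence norm continuous.
\item Let $k\to\infty$, using $\sigma$-normality and the increasing approximations $(y-\eps)_+\uparrow y$ together with $(\mathrm{FS}_c)$.
\end{itemize}
The gap I have not closed is passing from the corners $q_k$ back to $c_m^*c_m$ in the noncommutative setting, where $q_kc_m^*c_mq_k$ need not increase to $c_m^*c_m$.

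If this fails, I would try the following instead. Obtain an increasing sequence directly from the summable estimates, using
\[
c^*c\le(1+\delta)s_N^*s_N+(1+\delta^{-1})(c-s_N)^*(c-s_N)
\]
with $\delta$ chosen geometrically, and iterate so the tail terms telescope into a convergent monotone series to which $\sigma$-normality applies.
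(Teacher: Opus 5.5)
Your arguments for (ii) and (iii) are correct and match the paper's. Weak closedness of the norm-closed graph closure in $M\times H_\Phi$ is the same Hahn--Banach fact behind the paper's use of Mazur's convex combinations of tails.

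Part (i), however, is left unproved. Your reduction turns it into norm lower semicontinuity of a $\sigma$-normal weight along $s_N^*s_N\to c_m^*c_m$. That is not available here: in the paper, lower semicontinuity of $\sigma$-normal weights (Theorem~\ref{thm:sigma-norm-lsc}) is \emph{deduced from} parts (i) and (iii) of this lemma. Your corner attempt breaks down where you say it does. It also fails because $\Phi(qyq)\le\Phi(y)$ does not hold in general for a projection $q$ not commuting with $y$, so your Minkowski bound on $\Phi(s_N^*s_N)$ does not transfer to the corners $q_kMq_k$.

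The missing idea is to dominate $c_m^*c_m$ \emph{from above} by a monotone series, rather than approximate it from below. Choose the subsequence so that $\|b_k\|\le2^{-k}$ and $\|\Lambda_\Phi(b_k)\|\le2^{-k}$. For finite sums and every vector $\zeta$,
\[
\Bigl\|\sum_k b_k\zeta\Bigr\|^2\le\Bigl(\sum_k\|b_k\zeta\|\Bigr)^2\le\Bigl(\sum_k2^{-k}\Bigr)\sum_k2^k\|b_k\zeta\|^2 .
\]
Passing to norm limits gives
\[
 c_m^*c_m\le 2^{1-m}B_m,\qquad B_m=\sum_{k\ge m}2^kb_k^*b_k .
\]
The series $B_m$ converges in operator norm because $\|2^kb_k^*b_k\|\le2^{-k}$. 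Hence its partial sums increase to $B_m$, and $\sigma$-normality gives
\[
\Phi(B_m)=\sum_{k\ge m}2^k\|\Lambda_\Phi(b_k)\|^2\le2^{1-m}.
\]
Monotonicity of $\Phi$ then yields $\Phi(c_m^*c_m)\le2^{2-2m}$. So $a=c_m+a_m\in\mathfrak n_\Phi$ and $\Lambda_\Phi(a_m)\to\Lambda_\Phi(a)$, whence $\xi=\Lambda_\Phi(a)$. No lower semicontinuity is needed.

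Your fallback inequality $c^*c\le(1+\delta)s^*s+(1+\delta^{-1})(c-s)^*(c-s)$, suitably iterated, yields exactly this weighted inequality, so it was the right direction. What you still need to make explicit is that the weights must make the dominating series converge in operator norm. This is where the decay of $\|b_k\|$, and not only of $\|\Lambda_\Phi(b_k)\|$, is used: only then does $\sigma$-normality compute the weight of the dominating element.
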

\begin{proof}
{\rm (i)}\ Suppose $a_n\to a$ in norm and $\Lambda_\Phi(a_n)\to\eta$ in Hilbert
space norm. Choose a subsequence $(b_k)$ sufficiently rapidly convergent
in both norms so that, for $d_k=b_{k+1}-b_k$,
\[
 \|d_k\|\le2^{-k},\qquad \|\Lambda_\Phi(d_k)\|\le2^{-k}.
\]
For $m\ge1$ the series
\[
 B_m=\sum_{k=m}^\infty 2^k d_k^*d_k
\]
converges in norm, since the norm of its $k$th summand is at most
$2^{-k}$. Sigma-normality,
followed by additivity on finite sums, gives
\[
 \Phi(B_m)=\sum_{k=m}^\infty2^k\|\Lambda_\Phi(d_k)\|^2
 \le\sum_{k=m}^\infty2^{-k}=2^{1-m}.
\]
We have $$(a-b_m)^*(a-b_m)
 =\left(\sum_{k=m}^\infty d_k\right)^*
   \left(\sum_{k=m}^\infty d_k\right).$$ 
   For operators \(a_1,\ldots,a_N\) and positive real numbers \(w_1,\ldots,w_N\), we have 
$$\left(\sum_{k=1}^{N}a_k\right)^*
\left(\sum_{k=1}^{N}a_k\right)
\le
\left(\sum_{k=1}^{N}w_k^{-1}\right)
\left(\sum_{k=1}^{N}w_k\,a_k^*a_k\right).
$$
To see this note that for every vector \(\xi\) we have 
\[
\begin{aligned}
\left\|\sum_k a_k\xi\right\|^2
&\le\left(\sum_k\|a_k\xi\|\right)^2\\
&=\left(\sum_k w_k^{-1/2}\,w_k^{1/2}\|a_k\xi\|\right)^2\\
&\le\left(\sum_k w_k^{-1}\right)
       \sum_k w_k\|a_k\xi\|^2.
\end{aligned}
\]
Taking $a_k = d_k$ and \(w_k=2^k\), and taking norm limits, yields
  $$\left(\sum_{k=m}^\infty d_k\right)^*
   \left(\sum_{k=m}^\infty d_k\right)
 \le\left(\sum_{k=m}^\infty2^{-k}\right)
       \left(\sum_{k=m}^\infty2^k d_k^*d_k\right)
 =2^{1-m}B_m.$$
Consequently
\[
 \Phi((a-b_m)^*(a-b_m))\le2^{2-2m}.
\]
Thus $a-b_m\in\mathfrak n_\Phi$, so $a\in\mathfrak n_\Phi$, and
\[
 \|\Lambda_\Phi(a)-\Lambda_\Phi(b_m)\|\le2^{1-m}\longrightarrow0.
\]
It follows that $\Lambda_\Phi(a)=\eta$. Thus the graph is 
closed.

\smallskip 

\textrm{(ii) and (iii)}\   Let $x_n,x\in M_+$ with $x_n\to x$ in
norm and $L=\liminf_n\Phi(x_n)$.  The lower semicontinuity at $x$ is obvious if $L= \infty$. Otherwise
pass to a subsequence with finite weights converging to $L< \infty$. Set $a_n=x_n^{1/2}$ and $a=x^{1/2}$.
Then $a_n\to a$ in norm and $\Lambda_\Phi(a_n)$ is bounded.
A further subsequence of this converges weakly to some $\eta\in H_\Phi$.
By Mazur's lemma we obtain convex combinations of tails of $(a_n)$, namely 
\[
 b_j=\sum_{n\ge j}\alpha_{j,n}a_n,\qquad
 \alpha_{j,n}\ge0,\qquad\sum_{n\ge j}\alpha_{j,n}=1,
\]
such that $\Lambda_\Phi(b_j)\to\eta$ in norm. Also $b_j\to a$ in norm.

For (ii), the additional assumption $\Phi(x)<\infty$ gives
$a\in\mathfrak n_\Phi$. Closability applied to $b_j-a$ yields
$\eta=\Lambda_\Phi(a)$. For (iii), closedness itself gives both
$a\in\mathfrak n_\Phi$ and $\eta=\Lambda_\Phi(a)$. In either case, weak lower semicontinuity of the Hilbert
norm gives
\[
 \Phi(x)=\|\eta\|^2
 \le\liminf_n\|\Lambda_\Phi(a_n)\|^2=L,
\]
establishing lower semicontinuity. \end{proof}

\begin{theorem}
\label{thm:sigma-norm-lsc}
Every $\sigma$-normal weight on a von Neumann algebra is norm lower
semicontinuous on the positive cone. 
\end{theorem}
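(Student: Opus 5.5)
This should follow immediately from Lemma~\ref{lem:closed-GNS}. By part (i) of that lemma, a $\sigma$-normal weight $\Phi$ has a closed GNS map $\Lambda_\Phi:\mathfrak n_\Phi\subseteq M\to H_\Phi$, where $M$ carries the norm topology. Part (iii) then gives norm lower semicontinuity of $\Phi$ on all of $M_+$. So the proof is just the composition (i)$\Rightarrow$(iii), and the write-up would be two lines.

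For the reader, I would recall the mechanism behind (iii). Let $x_n\to x$ in norm with $\liminf_n\Phi(x_n)=L<\infty$. Pass to a subsequence along which the weights converge to $L$. Put $a_n=x_n^{1/2}$; these converge in norm to $a=x^{1/2}$ by norm continuity of the square root on the positive cone. The vectors $\Lambda_\Phi(a_n)$ are then bounded in $H_\Phi$, so a further subsequence converges weakly to some $\eta$. Mazur's lemma applied to tails produces convex combinations $b_j\to a$ in norm with $\Lambda_\Phi(b_j)\to\eta$ in norm. Closedness gives $a\in\mathfrak n_\Phi$ with $\Lambda_\Phi(a)=\eta$. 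Weak lower semicontinuity of the Hilbert norm then yields
\[
 \Phi(x)=\|\eta\|^2\le\liminf_n\|\Lambda_\Phi(a_n)\|^2=L .
\]

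The only substantive point is the closedness in (i), which the lemma already handles. It takes a rapidly convergent subsequence and uses $\sigma$-normality to evaluate $\Phi$ on the norm-convergent series $\sum_k 2^k d_k^*d_k$, together with the weighted Cauchy--Schwarz operator inequality. Since this is established earlier, I expect no obstacle beyond checking that the hypotheses of (iii) are exactly the conclusion of (i). They are, because no finiteness of $\Phi(x)$ is assumed in (iii), unlike (ii).
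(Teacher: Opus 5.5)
Your proposal is correct and matches the paper's proof exactly: the paper proves this theorem by combining Lemma~\ref{lem:closed-GNS}(i), which gives closedness of $\Lambda_\Phi$ under $\sigma$-normality, with part (iii) of that lemma. Your sketch of the mechanism behind (iii) (square roots, a weakly convergent subsequence, Mazur's lemma, closedness, and weak lower semicontinuity of the Hilbert norm) also faithfully reproduces the paper's argument.
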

\begin{proof}  Apply parts (i) and (iii) of Lemma \ref{lem:closed-GNS}.
\end{proof}

\begin{corollary}
\label{cor:three-regularities}
For every weight on every von Neumann algebra,
\[
 \begin{gathered}
 \text{sequential weak$^*$ lower semicontinuity}\\
 \Downarrow\\
 \sigma\text{-normality}\\
 \Downarrow\\
 \text{norm lower semicontinuity}.
 \end{gathered}
\]
Norm lower semicontinuity also implies property (FS$_{\rm c}$) and 
\eqref{eq:FS}.
\end{corollary}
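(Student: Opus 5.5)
The corollary has three parts: two implications and a final claim about (FS$_{\rm c}$) and \eqref{eq:FS}. I would prove them in that order, each from an earlier observation in the paper.

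\emph{First implication.} Sequential weak$^*$ lower semicontinuity implies $\sigma$-normality. This is the remark after Theorem~\ref{thm:three-strict-decomposition}. If $0\le x_n\uparrow x$, then $x_n\to x$ strongly, hence weak$^*$. Monotonicity gives $\Phi(x_n)\uparrow L\le\Phi(x)$. Lower semicontinuity gives $\Phi(x)\le\liminf_n\Phi(x_n)=L$, so $\Phi(x_n)\uparrow\Phi(x)$.

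\emph{Second implication.} $\sigma$-normality implies norm lower semicontinuity; this is exactly Theorem~\ref{thm:sigma-norm-lsc}. The only substantive input is Lemma~\ref{lem:closed-GNS}: part (i) shows that $\Lambda_\Phi$ is closed for a $\sigma$-normal weight, and part (iii) converts closedness into norm lower semicontinuity on all of $M_+$.

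\emph{Final claim.} Fix $x\in M_+$. By the functional calculus, choose the lower dyadic approximants $s_n=s_n(x)\in W^*(1,x)$. Each $s_n$ has finite spectrum, $0\le s_n\le x$, $(s_n)$ is increasing, and $\|s_n-x\|\le 2^{-n}$, so $s_n\to x$ in norm. Norm lower semicontinuity and monotonicity then give
\[
\Phi(x)\le\liminf_n\Phi(s_n)\le\sup_n\Phi(s_n)\le\sup\{\Phi(s):0\le s\le x,\ s\in W^*(1,x),\ s\text{ finite spectrum}\}\le\Phi(x).
\]
Hence equality holds throughout, which is (FS$_{\rm c}$). Condition \eqref{eq:FS} follows, since its supremum is over a larger set and is still bounded by $\Phi(x)$.

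There is no real obstacle beyond what is already in Lemma~\ref{lem:closed-GNS}. The only care needed is that the dyadic approximants converge in norm, not merely strongly; this holds because they are uniform approximations of the identity function on the compact set $\sigma(x)$.
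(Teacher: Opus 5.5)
Your proposal is correct and follows the paper's proof essentially step for step. You use the weak$^*$ convergence of bounded increasing sequences plus monotonicity for the first implication, Theorem~\ref{thm:sigma-norm-lsc} (via Lemma~\ref{lem:closed-GNS}(i) and (iii)) for the second, and norm-convergent finite-spectrum approximants $s_n\le x$ in $W^*(1,x)$ together with norm lower semicontinuity for the final claim, noting explicitly that this yields (FS$_{\rm c}$) and hence \eqref{eq:FS}.
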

\begin{proof}
If $0\le x_n\uparrow x$, then $x_n\to x$ weak$^*$, so weak* lower
semicontinuity and monotonicity give
$\Phi(x)\le\liminf_n\Phi(x_n)\le\Phi(x)$.
The second implication is Theorem~\ref{thm:sigma-norm-lsc}.
For the last assertion, by functional calculus choose positive finite-spectrum `simple functions'  $s_n \in W^*(1,x)$ 
with $s_n \le x$ and $\|s_n-x\|\to0$. Then
\[
 \Phi(x)\le\liminf_n\Phi(s_n)\le\Phi(x).
\]
This gives \eqref{eq:FS}.
\end{proof}

Let $S_+(M)$ 
be the bounded positive $\sigma$-normal functionals on $M$.
 Pedersen--Takesaki \cite[Corollary~7.3, p.~82]{PedersenTakesaki}
prove that every norm-lower-semicontinuous weight on a $C^*$-algebra
is a sum of bounded positive functionals. Applied to $M$ regarded as
a $C^*$-algebra, this gives
\begin{equation}\label{eq:norm-lsc-sum}
 \Phi(x)=\sum_{i\in I}\rho_i(x),\qquad x\in M_+,
 \qquad \rho_i\in M^{*+}.
\end{equation}
Here the sum is the supremum of finite subsums. 

\begin{lemma}\label{lem:finite-domain-part}
Let $\Phi$ be any weight and $0\le\rho\le\Phi$ a bounded positive
functional. There is a bounded positive functional $\omega\le\rho$
which agrees with $\rho$ on every positive element of finite
$\Phi$-weight. It can be constructed as follows. In the GNS representation
$(\pi,H,\xi)$ of $\rho$, put
\[
 K=\overline{\{\pi(a)\xi:a\in\mathfrak n_\Phi\}},\qquad P=P_K,
 \qquad \omega(x)=\langle\pi(x)P\xi,P\xi\rangle.
\]
If $\Phi$ is countably additive on projections, then $\omega\in S_+(M)$.
If $\Phi$ is completely additive on projections, then $\omega$ is normal.
\end{lemma}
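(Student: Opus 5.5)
The plan is to exploit the fact that $\mathfrak n_\Phi$ is a left ideal, so that $K$ is $\pi(M)$-invariant and $P$ lies in $\pi(M)'$. Concretely, $\pi(b)\pi(a)\xi=\pi(ba)\xi$ with $ba\in\mathfrak n_\Phi$, so $K$ is invariant under $\pi(M)$, and $P$ commutes with $\pi(M)$.

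For $x\ge0$ we then get
\[
\rho(x)=\langle\pi(x)P\xi,P\xi\rangle+\langle\pi(x)(1-P)\xi,(1-P)\xi\rangle ,
\]
because the cross terms vanish. This gives $0\le\omega\le\rho$. If $\Phi(x)<\infty$ then $x^{1/2}\in\mathfrak n_\Phi$, so $\pi(x^{1/2})\xi\in K$. Hence
\[
\rho(x)=\|\pi(x^{1/2})\xi\|^2=\|P\pi(x^{1/2})\xi\|^2=\|\pi(x^{1/2})P\xi\|^2=\omega(x).
\]
This settles the first two assertions.

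For the continuity assertions I would first replace the generators of $K$ by ones built from finite-weight projections. Let $a\in\mathfrak n_\Phi$ and let $f_k=\Chi_{[1/k,\infty)}(|a|)$. Then $\Phi(f_k)\le k^2\Phi(a^*a)<\infty$. Also $af_k=c_kf_k$ for some $c_k\in M$, and $af_k\to a$ in norm. Hence $K$ is the closed span of vectors $\pi(b)\pi(f)\xi$ with $b\in M$ and $f$ a projection of finite $\Phi$-weight.

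Fix such an $f$. By the argument in the proof of Theorem~\ref{thm:three-strict-decomposition}, $\Phi|_{(fMf)_+}$ extends to a bounded positive functional on $fMf$. It is countably additive on projections, so it is $\sigma$-normal by Theorem~\ref{thm:main}. Next, $\rho|_{fMf}\le\Phi|_{fMf}$, and a positive functional dominated by a $\sigma$-normal bounded one is $\sigma$-normal. Indeed, if $y_n\uparrow y$ then
\[
0\le\rho(y)-\rho(y_n)\le\Phi(y)-\Phi(y_n)\to0 .
\]
Since $y\mapsto fb^*ybf$ is positive and preserves suprema of increasing sequences, the functional $y\mapsto\rho(fb^*ybf)=\langle\pi(y)\pi(bf)\xi,\pi(bf)\xi\rangle$ lies in $S_+(M)$.

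Now take a finite sum $\eta=\sum_{i=1}^n\pi(b_if_i)\xi$. The operator inequality in the proof of Lemma~\ref{lem:closed-GNS}, taken with equal weights, gives
\[
\langle\pi(\cdot)\eta,\eta\rangle\le n\sum_i\langle\pi(\cdot)\pi(b_if_i)\xi,\pi(b_if_i)\xi\rangle .
\]
So this vector functional is dominated by a $\sigma$-normal one, and is therefore $\sigma$-normal. Choose such $\eta_k\to P\xi$ in norm. Then $\langle\pi(\cdot)\eta_k,\eta_k\rangle\to\omega$ in norm.

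The step I expect to need the most care is the final limit: showing that $S_+(M)$ is norm closed. My plan is a direct $\eps/3$ estimate. For $y_n\uparrow y$ with $\|y\|\le C$, each approximant changes values by at most $C\|\omega-\omega_k\|$, so $\sup_n\omega(y_n)=\omega(y)$ follows. In the completely additive case the same chain works with nets. The compressed functionals on $fMf$ are then normal by the classical characterization, and norm limits of normal functionals are normal because $M_*$ is norm closed. This gives normality of $\omega$.
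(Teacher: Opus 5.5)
Your proof is correct and follows essentially the same route as the paper. Both arguments use the commutant projection $P$ to get $0\le\omega\le\rho$ and agreement on finite-weight elements, then obtain continuity from spectral cut-offs to finite-weight projections, Theorem~\ref{thm:main} on the corner, domination, and norm closedness of $S_+(M)$. The only difference is bookkeeping: the paper approximates each $\rho_a$ ($a\in\mathfrak n_\Phi$) directly by $\rho(p_ma^*xap_m)$ and uses that $\{\pi(a)\xi:a\in\mathfrak n_\Phi\}$ is already a dense subspace of $K$, while you pass to generators $\pi(bf)\xi$ and handle finite sums via $\omega_\eta\le n\sum_i\omega_{\eta_i}$ (which is fine, and you also make explicit the norm closedness of $S_+(M)$ that the paper leaves implicit).
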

\begin{proof} 
Since $\mathfrak n_\Phi$ is a left ideal, $K$ is invariant under $\pi(M)$, and hence is reducing. 
Thus $P$ commutes with
$\pi(M)$ and, for $x\ge0$,
\[
 \rho(x)=\omega(x)+
 \langle\pi(x)(1-P)\xi,(1-P)\xi\rangle.
\]
This gives $0\le\omega\le\rho$.
If $y \in M_+$ with $\Phi(y)<\infty$, then $y^{1/2}\in\mathfrak n_\Phi$, so
\[
 \pi(y^{1/2})P\xi=P\pi(y^{1/2})\xi=\pi(y^{1/2})\xi.
\]
Taking squared norms proves 
$\omega(y)=\rho(y)$ if $\Phi(y)<\infty.$

Assume next countable additivity on projections.  Fix $a\in\mathfrak n_\Phi$
and set $p_m=\Chi_{[1/m,\infty)}(a^*a)$. Then
\[
 \Phi(p_m)\le m\Phi(a^*a)<\infty,
 \qquad \|a-ap_m\|^2\le\frac1m.
\]
By Theorem \ref{thm:main}  the bounded functional
$\lambda =\Phi|_{p_mMp_m}$ is $\sigma$-normal. The positive
functional $\rho|_{p_mMp_m}$ is dominated by $\lambda$. It is
therefore $\sigma$-normal (since  $0\le\rho(p_m a_n p_m)\le\lambda(a_n)\to0$ when $a_n\downarrow0$ in $p_m M p_m$). 
Consequently the functional $\rho_m^a(x) = \rho(p_ma^*xa p_m)$ 
is $\sigma$-normal, since $x\mapsto p_ma^*xa p_m$ preserves bounded
increasing sequential suprema. By a simple triangle inequality argument (writing $a^*xa - b^* x b = a^*x(a-b) + (a-b)^*xb$), 
we have 
\[
 \|\rho_{a}-\rho_{b}\|
 \le\|\rho\|(\|a\|+\|b\|)\|a-b\|,
 \qquad\rho_a(x)=\rho(a^*xa).
\]
Thus  the functionals $\rho_m^a$ converge in norm to
$\rho_a(x)=\langle\pi(x)\pi(a)\xi,\pi(a)\xi\rangle$.
Hence $\rho_a$ is $\sigma$-normal.

The vectors $\pi(a)\xi$, $a\in\mathfrak n_\Phi$, are dense in $K$.
For vector functionals of the contractive representation $\pi$, a similar triangle inequality argument gives
\[
 \|\omega_\eta-\omega_\zeta\|
 \le(\|\eta\|+\|\zeta\|)\|\eta-\zeta\|.
\]
It follows, again by norm closedness, that every vector functional from
$K$ is $\sigma$-normal.  Thus $\omega$  is $\sigma$-normal.

If we have complete additivity on projections an almost identical argument, using bounded
increasing nets and the normal versions of the closure and domination
statements, proves that $\omega$ is normal.
\end{proof}

\begin{proposition} 
\label{prop:sigma-finite-values}
Let $\Phi$ be norm lower semicontinuous and countably additive on
projections. There exists a $\sigma$-normal weight $\Theta\le\Phi$,
expressible as a sum of members of $S_+(M)$, such that
\[
 \Theta(y)=\Phi(y)\qquad(y\in M_+,\ \Phi(y)<\infty).
\]
If we have complete additivity on projections, $\Theta$ may instead be expressed
as a sum of bounded normal positive functionals and is normal. \end{proposition}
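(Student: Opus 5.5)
The plan is to combine the Pedersen--Takesaki decomposition \eqref{eq:norm-lsc-sum} with Lemma~\ref{lem:finite-domain-part}. Since $\Phi$ is norm lower semicontinuous, \eqref{eq:norm-lsc-sum} writes $\Phi=\sum_{i\in I}\rho_i$ on $M_+$ with $\rho_i\in M^{*+}$. Each $\rho_i$ satisfies $0\le\rho_i\le\Phi$, because a single term is dominated by any finite subsum containing it. So Lemma~\ref{lem:finite-domain-part} applies to each $\rho_i$ and gives $\omega_i\le\rho_i$ with the following properties:
\begin{itemize}
\item $\omega_i\in S_+(M)$, because $\Phi$ is countably additive on projections;
\item $\omega_i$ is normal under complete additivity;
\item $\omega_i(y)=\rho_i(y)$ whenever $\Phi(y)<\infty$.
\end{itemize}
I then define $\Theta(x)=\sum_i\omega_i(x)$ for $x\in M_+$, understood as the supremum of finite subsums.

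Next I check that $\Theta$ has the required properties. It is additive and positively homogeneous, with $\Theta(0)=0$, since sums of nonnegative families behave this way; so it is a weight. Termwise $\omega_i\le\rho_i$ gives $\Theta\le\Phi$. If $\Phi(y)<\infty$, then $\omega_i(y)=\rho_i(y)$ for every $i$, so $\Theta(y)=\sum_i\rho_i(y)=\Phi(y)$. This is exactly the agreement on finite-weight elements.

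For $\sigma$-normality, let $0\le x_n\uparrow x$. Each $\omega_i$ is $\sigma$-normal, so $\omega_i(x)=\sup_n\omega_i(x_n)$. Since all terms are nonnegative, the supremum over finite subsets $F\subseteq I$ and the supremum over $n$ commute:
\[
\Theta(x)=\sup_F\sum_{i\in F}\sup_n\omega_i(x_n)=\sup_n\sup_F\sum_{i\in F}\omega_i(x_n)=\sup_n\Theta(x_n).
\]
This is the same interchange already used in the proof of Theorem~\ref{thm:three-strict-decomposition}. Under complete additivity the $\omega_i$ are normal, and the same interchange with bounded increasing nets gives normality of $\Theta$.

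I expect no real obstacle here: the substantive work, namely $\sigma$-normality of the compressed functionals via Theorem~\ref{thm:main}, was already done in Lemma~\ref{lem:finite-domain-part}. The only points needing care are the domination $\rho_i\le\Phi$, which the lemma needs as a hypothesis, and interpreting the possibly uncountable sum as a supremum of finite subsums, so that the interchange of suprema is legitimate.
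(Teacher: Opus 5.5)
Your proposal is correct and is essentially the paper's own proof: you write $\Phi=\sum_i\rho_i$ via the Pedersen--Takesaki decomposition, apply Lemma~\ref{lem:finite-domain-part} to each $\rho_i\le\Phi$ to obtain $\omega_i$, and set $\Theta=\sum_i\omega_i$. You also spell out the details the paper leaves as ``clearly'': domination of each $\rho_i$ via a finite subsum, additivity of $\Theta$, and the interchange of suprema of nonnegative increasing families.
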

\begin{proof} Each $\rho_i$ in  \eqref{eq:norm-lsc-sum} is dominated by $\Phi$.
Apply Lemma~\ref{lem:finite-domain-part} to obtain $\omega_i\le\rho_i$
with the stated finite $\Phi$-value agreement and appropriate type of normality.  Put
$\Theta=\sum_i\omega_i$.
Then $\Theta\le\Phi$ and the finite values agree term by term. 
Clearly $\Theta$ preserves bounded increasing sequential suprema (net suprema in the normal case).
\end{proof}

\begin{theorem}
\label{thm:recovery-sigma}
Let $M$ be any von Neumann algebra. Suppose that $\Phi$ is a weight
countably additive on projections and satisfying the finite-value
recovery condition {\rm (R)}. The following conditions are equivalent:
\begin{enumerate}[label=\textup{(\roman*)}]
\item $\Phi$ is $\sigma$-normal.
\item $\Phi$ is norm lower semicontinuous on $M_+$.
\item $\Phi(x)=\sup_{\omega\in\mathcal F}\omega(x)$ for every
$x\in M_+$, for some $\mathcal F\subseteq S_+(M)$.
\item $\Phi(x)=\sum_{i\in I}\omega_i(x)$ for every $x\in M_+$,
for some family $(\omega_i)_{i\in I}\subseteq S_+(M)$.
\end{enumerate}
The families may be arbitrary, and their supports need not be orthogonal. 
\end{theorem}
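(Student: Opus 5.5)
I will prove the cycle (i)$\Rightarrow$(ii)$\Rightarrow$(iv)$\Rightarrow$(iii)$\Rightarrow$(i), using the earlier results in this section.

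(i)$\Rightarrow$(ii) is Theorem~\ref{thm:sigma-norm-lsc}.

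For (ii)$\Rightarrow$(iv), I will use Proposition~\ref{prop:sigma-finite-values}. It needs norm lower semicontinuity together with countable additivity on projections. It produces $\Theta=\sum_i\omega_i$ with $\omega_i\in S_+(M)$, such that $\Theta\le\Phi$ and $\Theta(y)=\Phi(y)$ whenever $\Phi(y)<\infty$. It remains to upgrade this agreement to all of $M_+$, and this is exactly where (R) enters. For $x\in M_+$, (R) and monotonicity of $\Theta$ give
\[
 \Phi(x)=\sup\{\Phi(y):0\le y\le x,\ \Phi(y)<\infty\}
        =\sup\{\Theta(y):0\le y\le x,\ \Phi(y)<\infty\}\le\Theta(x)\le\Phi(x).
\]
Hence $\Phi=\Theta$, which is (iv).

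(iv)$\Rightarrow$(iii) is immediate. Take $\mathcal F$ to be the set of finite subsums of the family $(\omega_i)$; each finite subsum lies in $S_+(M)$, and the sum of the family is the supremum of its finite subsums.

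(iii)$\Rightarrow$(i) is the standard exchange of suprema. If $0\le x_n\uparrow x$, then each $\omega\in\mathcal F$ is $\sigma$-normal, so
\[
 \Phi(x)=\sup_{\omega\in\mathcal F}\sup_n\omega(x_n)=\sup_n\sup_{\omega\in\mathcal F}\omega(x_n)=\sup_n\Phi(x_n).
\]

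I expect no serious obstacle, since the real work is already contained in Lemma~\ref{lem:finite-domain-part}, which relies on Theorem~\ref{thm:main}, and in the Pedersen--Takesaki decomposition behind Proposition~\ref{prop:sigma-finite-values}. The one point to check carefully is that (R) alone, with no finite-subprojection property, suffices to pass from agreement on finite-weight elements to full agreement. The display above shows that it does, because $\Theta$ is monotone and dominated by $\Phi$. Countable additivity on projections is used only to invoke Proposition~\ref{prop:sigma-finite-values}. Finally, the families produced in (iii) and (iv) are arbitrary, and their supports need not be orthogonal, as the statement notes.
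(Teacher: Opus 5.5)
Your proposal is correct and follows the paper's proof essentially step for step. It uses the same cycle (i)$\Rightarrow$(ii)$\Rightarrow$(iv)$\Rightarrow$(iii)$\Rightarrow$(i): (ii)$\Rightarrow$(iv) comes from Proposition~\ref{prop:sigma-finite-values} combined with (R) and the monotonicity of $\Theta\le\Phi$, (iv)$\Rightarrow$(iii) takes finite partial sums, and (iii)$\Rightarrow$(i) interchanges the two suprema.
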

\begin{proof} 
The implication (i)$\Rightarrow$(ii) is
Theorem~\ref{thm:sigma-norm-lsc}. Assume (ii), and let $\Theta$ be
supplied by Proposition~\ref{prop:sigma-finite-values}. For
$0\le y\le x$ with $\Phi(y)<\infty$,
\[
 \Phi(y)=\Theta(y)\le\Theta(x).
\]
Taking the supremum and using (R) gives $\Phi(x)\le\Theta(x)$.
Since $\Theta\le\Phi$, equality holds, proving (iv).
For (iv)$\Rightarrow$(iii), take the family of finite partial sums.
Finally, (iii) gives, whenever $0\le x_n\uparrow x$,
\[
 \Phi(x)=\sup_{\omega\in\mathcal F}\sup_n\omega(x_n)
        =\sup_n\sup_{\omega\in\mathcal F}\omega(x_n)
        =\sup_n\Phi(x_n).
\]
This proves (i). 
\end{proof}

\begin{corollary}[The corresponding normality criterion]
\label{cor:recovery-normal}
On any von Neumann algebra, a weight which is completely additive on
projections and satisfies recovery {\rm (R)}  is normal if and only
if it is norm lower semicontinuous. In that event it is a sum of bounded
normal positive functionals.
\end{corollary}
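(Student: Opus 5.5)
The plan is to mirror the proof of Theorem~\ref{thm:recovery-sigma}, replacing $\sigma$-normality by normality and $S_+(M)$ by the bounded normal positive functionals. Throughout, $\Phi$ is completely additive on projections and satisfies (R).

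\emph{Forward direction.} Suppose $\Phi$ is normal. Then it is in particular $\sigma$-normal, so Theorem~\ref{thm:sigma-norm-lsc} (equivalently Corollary~\ref{cor:three-regularities}) gives norm lower semicontinuity on $M_+$. No use of (R) is needed here.

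\emph{Converse.} Assume $\Phi$ is norm lower semicontinuous. We apply Proposition~\ref{prop:sigma-finite-values} in its completely additive form; its hypotheses are norm lower semicontinuity and complete (hence countable) additivity on projections. It supplies a weight $\Theta=\sum_i\omega_i$ with each $\omega_i$ a bounded normal positive functional, $\Theta\le\Phi$, and $\Theta(y)=\Phi(y)$ whenever $\Phi(y)<\infty$. The $\omega_i$ arise from the Pedersen--Takesaki decomposition \eqref{eq:norm-lsc-sum} followed by Lemma~\ref{lem:finite-domain-part}. That lemma is the only genuinely nontrivial input. In its normal version it uses complete additivity to see that $\rho$ restricted to $p_mMp_m$ is dominated by the bounded functional $\Phi|_{p_mMp_m}$, which is completely additive and therefore normal by the classical characterization. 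Normality then passes to the vector functionals by norm-closure of the normal functionals.

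Next, for $x\in M_+$ and $0\le y\le x$ with $\Phi(y)<\infty$, monotonicity of $\Theta$ gives
\[
\Phi(y)=\Theta(y)\le\Theta(x).
\]
Taking the supremum over such $y$ and using (R) yields $\Phi(x)\le\Theta(x)$. Hence $\Phi=\Theta=\sum_i\omega_i$, which is the final assertion.

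Finally, a sum of normal positive functionals is normal. If $0\le x_\alpha\uparrow x$, then
\[
\sum_i\sup_\alpha\omega_i(x_\alpha)=\sup_\alpha\sum_i\omega_i(x_\alpha),
\]
because suprema of nonnegative terms commute with finite partial sums and a directed supremum. So $\Phi$ is normal.

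The main point to check is that the normal (net) version of Lemma~\ref{lem:finite-domain-part} really holds as asserted there. In particular, a positive functional dominated by a normal one on a corner is normal: if $a_\alpha\downarrow0$, then $\rho(a_\alpha)\le\lambda(a_\alpha)\to0$. Also, norm limits of normal functionals are normal. Once these are accepted, the rest is formal.
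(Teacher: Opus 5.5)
Your proposal is correct and is essentially the paper's own proof. The forward direction goes through $\sigma$-normality and Theorem~\ref{thm:sigma-norm-lsc}, and the converse applies the normal version of Proposition~\ref{prop:sigma-finite-values}, then the (R)-argument from Theorem~\ref{thm:recovery-sigma}, to get $\Phi=\Theta=\sum_i\omega_i$; your explicit checks on the normal version of Lemma~\ref{lem:finite-domain-part} (domination by the bounded, completely additive, hence normal, corner functional, and norm-closedness of $M_*$) fill in exactly what the paper's ``almost identical argument'' leaves implicit.
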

\begin{proof} 
For the reverse implication, use the normal version of
Proposition~\ref{prop:sigma-finite-values}  and the  argument giving $\Theta = \Phi$ in
Theorem~\ref{thm:recovery-sigma}. Conversely, normality implies
$\sigma$-normality, hence norm lower semicontinuity by
Theorem~\ref{thm:sigma-norm-lsc}. Equivalently one can use Haagerup's
weak$^*$ lower semicontinuity theorem \cite[Theorem~1.8]{Haag}.
\end{proof}

{\bf Remark.} The hypothesis (R) is used in the preceding proofs to extend
finite-value agreement to all positive elements. The commutative-
determination theorems of \cite{Bd}, together with
Proposition~\ref{prop:H-projection-spectral} and Corollary~\ref{thm:sigma-norm-lsc}, remove (R) from the equivalence
between $\sigma$-normality and norm lower semicontinuity under
countable projection additivity, and from the corresponding normality
equivalence under complete projection additivity. 

\begin{lemma}\label{lem:orthogonal-GNS-series}
Assume {\rm (H$_\sigma$)}. Let $c_k\in\mathfrak n_\Phi$ and let $q_k$ be
mutually orthogonal projections with $c_k=c_kq_k$. If
\[
 \sum_{k=1}^{\infty}2^k
 \bigl(\|c_k\|^2+\|\Lambda(c_k)\|^2\bigr)<\infty,
\]
then $c=\sum_kc_k$ converges in operator norm, belongs to
$\mathfrak n_\Phi$, and
$\Lambda(c)=\sum_k\Lambda(c_k)$ in the Hilbert space norm.
\end{lemma}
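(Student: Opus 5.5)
The plan is to reduce everything to a single abelian von Neumann subalgebra, where (H$_\sigma$) makes $\Phi$ $\sigma$-normal, and then to adapt the proof of Lemma~\ref{lem:closed-GNS}(i). The obstacle is that $c^*c$ and the $c_k^*c_k$ need not lie in a common abelian subalgebra. However, the orthogonality $c_k=c_kq_k$ gives $c_k^*c_j=q_kc_k^*c_jq_j$, so $c^*c$ is the ``matrix'' $\sum_{j,k}c_k^*c_j$ relative to the orthogonal family $(q_k)$. Its diagonal part $\sum_k c_k^*c_k$ is what we can control.

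First, norm convergence of $c=\sum_kc_k$ is immediate, since $\sum_k\|c_k\|\le(\sum 2^{-k})^{1/2}(\sum 2^k\|c_k\|^2)^{1/2}<\infty$. Next, for a tail $r_m=\sum_{k\ge m}c_k$ I bound $r_m^*r_m$ by a positive element built from the $c_k^*c_k$, using the weighted inequality from the proof of Lemma~\ref{lem:closed-GNS}:
\[
 r_m^*r_m\le\Bigl(\sum_{k\ge m}2^{-k}\Bigr)\sum_{k\ge m}2^kc_k^*c_k=2^{1-m}B_m .
\]
Now $c_k^*c_k=q_kc_k^*c_kq_k\in q_kMq_k$, so the summands of $B_m$ live in mutually orthogonal corners. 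Hence the partial sums of $B_m$ are increasing and lie in the von Neumann algebra $\bigoplus_k q_kMq_k$, but they need not commute.

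This is the step I expect to be the main obstacle, since (H$_\sigma$) only gives $\sigma$-normality on abelian subalgebras. To handle it, I would first show that $\Phi(\sum_k y_k)=\sum_k\Phi(y_k)$ for positive $y_k\in q_kMq_k$ with norm-convergent sum. Choose a maximal abelian subalgebra $A_k$ of $q_kMq_k$ containing $y_k$, and let $A$ be the abelian von Neumann algebra generated by $1-\sum_k q_k$ and all the $A_k$; since the corners are orthogonal, $A\cong\ell^\infty$-$\bigoplus_kA_k\oplus\mathbb C$. Each $y_k$ lies in $A$, and so does $\sum_{k\le n}y_k\uparrow\sum_ky_k$. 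Then $\sigma$-normality of $\Phi|_{A_+}$ together with finite additivity gives the claim. Applied to $y_k=2^kc_k^*c_k$, this yields
\[
 \Phi(B_m)=\sum_{k\ge m}2^k\|\Lambda(c_k)\|^2<\infty .
\]
Consequently $\Phi(r_m^*r_m)\le 2^{1-m}\Phi(B_m)\to0$, and $\Phi(B_m)\le\Phi(B_1)<\infty$.

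Finally, $r_1=c\in\mathfrak n_\Phi$. Since $c-\sum_{k<m}c_k=r_m$ and $\Lambda$ is linear on $\mathfrak n_\Phi$, we get
\[
 \Bigl\|\Lambda(c)-\sum_{k<m}\Lambda(c_k)\Bigr\|^2=\Phi(r_m^*r_m)\le 2^{1-m}\Phi(B_1)\to0 ,
\]
which gives $\Lambda(c)=\sum_k\Lambda(c_k)$ in Hilbert norm. The hypothesis on $\|\Lambda(c_k)\|^2$ is used only through the finiteness of $\Phi(B_1)$. The only delicate point is the construction of the abelian subalgebra $A$ in the middle step. If one prefers to avoid maximal abelian subalgebras, one can instead apply the same orthogonal-corner argument directly to the increasing partial sums of $B_m$ inside a commutative subalgebra generated by $\{q_k\}$ and $\{c_k^*c_k\}$; these generators commute because they sit in orthogonal corners, apart from the relation $q_k c_k^*c_k=c_k^*c_k$, which is consistent with commutativity.
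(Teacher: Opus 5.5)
Your proposal is correct and is essentially the paper's proof. It uses Cauchy--Schwarz for norm convergence, the weighted inequality from Lemma~\ref{lem:closed-GNS} to bound the tails by $2^{1-m}B_m$, and (H$_\sigma$) on an abelian subalgebra containing the summands $2^kc_k^*c_k$ to get $\Phi(B)=\sum_k2^k\|\Lambda(c_k)\|^2$. The paper takes your fallback route directly: the summands lie in mutually orthogonal corners, so they already commute, and so do the partial sums of $B_m$. The obstacle you anticipated is therefore not real, and the maximal abelian subalgebras are unnecessary.
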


\begin{proof} 
A Cauchy--Schwarz argument gives $\sum_k\|c_k\|<\infty$. The series
\[
 B=\sum_{k=1}^{\infty}2^kc_k^*c_k
\]
also converges in operator norm. Its positive summands have orthogonal
supports, and therefore commute. Apply (H$_\sigma$) to the abelian
algebra they generate together with $1$,  to obtain
\[
 \Phi(B)=\sum_{k=1}^{\infty}2^k\Phi(c_k^*c_k)
        =\sum_{k=1}^{\infty}2^k\|\Lambda(c_k)\|^2<\infty.
\]
For finite partial sums, the argument in the proof of Lemma \ref{lem:closed-GNS} gives \[
 \left|\sum_{k=N+1}^{L}c_k\right|^2
 \le\left(\sum_{k>N}2^{-k}\right)
          \sum_{k=N+1}^{L}2^kc_k^*c_k\le2^{-N}B.
\]
Passing to the operator-norm limit preserves this inequality.  Applying $\Phi$, we see as the proof just referred to
that if 
$N=0$ then $c\in\mathfrak n_\Phi$.  For general $N$ it gives
\[
 \left\|\Lambda(c)-\sum_{k=1}^{N}\Lambda(c_k)\right\|^2
 \le2^{-N}\Phi(B)\longrightarrow0,
\] as desired. 
\end{proof}

Lemma  \ref{lem:closed-GNS} (i) assumes $\sigma$-normality and proves closedness. Next we assume only  (H$_\sigma$) and
prove closability. 

\begin{theorem}[Norm closability of the GNS map]
\label{thm:Hsigma-GNS-closable}
If $\Phi$ satisfies {\rm (H$_\sigma$)}, then $\Lambda_\Phi$ is closable
from the operator norm on its domain to the norm on $H_\Phi$.
Explicitly, $a_n\to0$ in operator norm and
$\Lambda(a_n)\to\eta$ in Hilbert-space norm imply $\eta=0$.
\end{theorem}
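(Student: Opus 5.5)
The plan is to reduce closability to the orthogonal-series statement of Lemma~\ref{lem:orthogonal-GNS-series}, which is the only place where (H$_\sigma$) gives genuine countable additivity of $\Phi$ over an infinite sum. Suppose $a_n\to0$ in norm and $\Lambda(a_n)\to\eta$. Since $\Lambda(\mathfrak n_\Phi)$ is dense in $H_\Phi$, it suffices to show $\langle\eta,\Lambda(b)\rangle=0$ for $b$ ranging over a dense subset of $\Lambda(\mathfrak n_\Phi)$.

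\emph{Step 1: right truncation.} For $b\in\mathfrak n_\Phi$ put $q_m=\Chi_{[1/m,\infty)}(b^*b)$. Then $\Phi(q_m)\le m\Phi(b^*b)<\infty$. On the abelian algebra $W^*(1,b^*b)$ the weight is $\sigma$-normal by (H$_\sigma$), so it is an integral there by Proposition~\ref{prop:abelian-integral-criterion}. Scalar dominated convergence then gives
\[
 \|\Lambda(b)-\Lambda(bq_m)\|^2=\Phi\bigl(b^*b\,\Chi_{[0,1/m)}(b^*b)\bigr)\longrightarrow0 .
\]
Hence the vectors $\Lambda(cp)$, with $c\in M$ and $p$ a projection of finite weight, are dense. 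This reduces the claim to showing $\langle\eta,\Lambda(cp)\rangle=0$, i.e.\ $\Phi(pc^*a_n)\to0$ along the sequence. Equivalently, it suffices to show that $\eta$ is orthogonal to the closed subspace $\overline{\Lambda(Mp)}$ for every finite-weight projection $p$.

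\emph{Step 2: the corner.} For such $p$, the restriction $\lambda=\Phi|_{pMp}$ is a bounded positive functional that is countably additive on projections, since (H$_\sigma$) gives this. By Theorem~\ref{thm:main} it is sequentially weak$^*$ continuous. The map $x\mapsto\Lambda(xp)$ is bounded from $M$ into $H_\Phi$, with norm at most $\Phi(p)^{1/2}$. If $a_n$ had the form $a_np$, we would be done at once, because then $\Lambda(a_n)$ would be the image of a norm-null sequence under a bounded map. The difficulty is that $a_n$ is not localized under $p$. We therefore need to split $a_n=a_np'+a_n(1-p')$ for a suitable finite-weight projection $p'\ge p$. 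The aim is to show that the pieces $\Lambda(a_n(1-p'))$, whose weak limit we may take along a subsequence, have limit orthogonal to $\Lambda(cp)$, while the pieces $\Lambda(a_np')$ tend to $0$.

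\emph{Step 3: the main obstacle.} The hard point is the lack of countable additivity of $\Phi$ on noncommuting positive sums. This is exactly why the argument of Lemma~\ref{lem:closed-GNS}(i) does not apply directly. What I would try first is to pass to a subsequence with increments $d_k=b_{k+1}-b_k$ satisfying
\[
 \|d_k\|\le2^{-k},\qquad \|\Lambda(d_k)\|\le2^{-k},
\]
so that $\eta=\Lambda(b_1)+\sum_k\Lambda(d_k)$, while $\sum_kd_k=-b_1$ in norm. Then, using the spectral truncations of Step~1 applied to each $d_k^*d_k$ with thresholds decreasing fast enough, I would disjointify the right supports to obtain mutually orthogonal projections $q_k$. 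The pieces $c_k=d_kq_k$ then satisfy the hypotheses of Lemma~\ref{lem:orthogonal-GNS-series}, and the errors $\Lambda(d_k-d_kq_k)$ are summably small by the abelian monotone convergence available from (H$_\sigma$). Lemma~\ref{lem:orthogonal-GNS-series} gives $\Lambda\bigl(\sum_kc_k\bigr)=\sum_k\Lambda(c_k)$. Comparing this with the norm limit $\sum_kd_k=-b_1$ should force $\eta=0$.

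I expect the disjointification to be the delicate step, since the right supports of the $d_k$ need not commute. If it fails, the fallback is to carry out the orthogonalization inside a single finite-weight corner $pMp$, as in Step~2. There the continuity supplied by Theorem~\ref{thm:main} replaces additivity, and the statement to prove becomes just $\Phi(pc^*a_n)\to0$.
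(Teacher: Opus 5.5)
Your right spectral truncation in Step~1 is correct, and it is exactly the device the paper uses: for $h=b^*b$ one has $\|\Lambda(b)-\Lambda(bq_m)\|^2=\Phi(h)-\Phi(hq_m)\to0$ by (H$_\sigma$) on $W^*(1,h)$. However, the proposal has a genuine gap at the end of Step~3, and this gap is independent of the disjointification you flag.

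Grant that you found orthogonal finite-weight $q_k$ with $c_k=d_kq_k$ satisfying the hypotheses of Lemma~\ref{lem:orthogonal-GNS-series}, and with $\sum_k(\|d_k-c_k\|+\|\Lambda(d_k-c_k)\|)$ small. The lemma gives $\Lambda(\sum_kc_k)=\sum_k\Lambda(c_k)$. The remainders $d_k(1-q_k)$, however, are right supported under the non-orthogonal projections $1-q_k$. So nothing lets you identify $\Lambda\bigl(\sum_k(d_k-c_k)\bigr)$ with $\sum_k\Lambda(d_k-c_k)$, and that identification is precisely closability. Concretely, put $r=\sum_k(d_k-c_k)=-b_1-\sum_kc_k$ and $\zeta=\sum_k\Lambda(d_k-c_k)$. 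Your comparison then yields only $\eta=\zeta-\Lambda(r)$, with $\|r\|$ and $\|\zeta\|$ small. That is the original problem over again, and any scheme with approximate localization plus a non-localized error series is circular in this way.

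Exact disjointification is also impossible in general for the increments $b_{k+1}-b_k$ of a pre-chosen subsequence. Their right supports are dictated by the given sequence, and they may all equal $1$.

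Your fallback does not close the gap either. The reduction to $\langle\eta,\Lambda(cp)\rangle=0$ is just a restatement of closability. In $\Phi(pc^*a_n)=\Phi(pc^*a_np)+\Phi(pc^*a_n(1-p))$ the first term is trivially at most $\|c\|\,\|a_n\|\,\Phi(p)$. The off-diagonal second term is the whole difficulty, and no property of $\Phi|_{pMp}$ controls it. Theorem~\ref{thm:main} plays no role here, since the sequences are norm null.

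The missing idea is to make the localization \emph{exact} by choosing the pieces adaptively (a gliding hump), and to approximate $\eta$ itself rather than the increments. Your Step~2 observation is the key. If $P=p_1+\cdots+p_{k-1}$ has finite weight, then $\|\Lambda(a_nP)\|\le\|a_n\|\Phi(P)^{1/2}\to0$, so $\Lambda(a_n(1-P))\to\eta$.

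The paper therefore picks $n$ late and puts $d=a_n(1-P)$. It takes $p_k=\mathbf 1_{[2^{-j},\infty)}(d^*d)$, which automatically lies under $1-P$ and has finite weight. The index $j$ is chosen via (H$_\sigma$) so that $\|\Lambda(d-dp_k)\|$ is small. This gives $b_k=dp_k=b_kp_k$ with $\|b_k\|\le2^{-k}$ and $\|\Lambda(b_k)-\eta\|\le2^{-k}$, and there is no error term in the localization.

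Then $X=\sum_k(-1)^{k+1}b_k$ is written both as $\sum_k(b_{2k-1}-b_{2k})$ and as $b_1+\sum_k(b_{2k+1}-b_{2k})$. In each series the right supports are dominated by mutually orthogonal projections, so Lemma~\ref{lem:orthogonal-GNS-series} applies to both. The difference of the partial sums of the two $\Lambda$-series telescopes to $\Lambda(b_{2N+1})$. This tends to $\eta$, and it also tends to $\Lambda(X)-\Lambda(X)=0$, so $\eta=0$.
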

\begin{proof} 
Suppose $a_n\in\mathfrak n_\Phi$, $a_n\to0$, and
$\Lambda(a_n)\to\eta$. We construct mutually orthogonal projections
$p_k$ of finite weight and elements $b_k=b_kp_k$ such that
\begin{equation}\label{eq:gliding-hump-bounds}
 \|b_k\|\le2^{-k},\qquad
 \|\Lambda(b_k)-\eta\|\le2^{-k}.
\end{equation}
Suppose $p_1,\ldots,p_{k-1}$ are already chosen and put
$P=p_1+\cdots+p_{k-1}$, taking $P=0$ initially. Then
$\Phi(P)<\infty$ and
\[
 \|\Lambda(a_nP)\|^2
 =\Phi(Pa_n^*a_nP)\le\|a_n\|^2\Phi(P)\longrightarrow0.
\]
Choose $n$ with $\|a_n\|\le2^{-k}$ and with both
$\|\Lambda(a_n)-\eta\|$ and $\|\Lambda(a_nP)\|$ at most
$2^{-k}/3$. Set $d=a_n(1-P)=a_n-a_nP \in\mathfrak n_\Phi$.

Put $h=d^*d$, so that $\Phi(h)<\infty$, and for $j\ge1$ let
$q_j=\mathbf{1}_{[2^{-j},\infty)}(h).$ 
Since $d=d(1-P)$, we have $q_j\le1-P$. Moreover,
$2^{-j}q_j\le h$, and hence
\[
 \Phi(q_j)\le 2^j\Phi(h)<\infty.
\]
The elements $hq_j$ increase to $h$ in the abelian von Neumann
algebra $W^*(1,h)$: indeed 
$0\le h-hq_j\le2^{-j}1$ by functional calculus. Thus (H$_\sigma$) gives
$\Phi(hq_j)\uparrow\Phi(h).$ 
Because $q_j$ commutes with $h$ and $\Phi(h)<\infty$, it follows that
\[
 \|\Lambda(d-dq_j)\|^2
 =\Phi\bigl((1-q_j)h(1-q_j)\bigr)
  =\Phi(h)-\Phi(hq_j)\longrightarrow0. \]
Choose $j$ large enough that
$\|\Lambda(d-dq_j)\|\le2^{-k}/3$, and set
$p_k=q_j$ and $b_k=dp_k$.
Then $p_k$ has finite weight and is orthogonal to
$p_1,\ldots,p_{k-1}$, while
$\|b_k\|\le\|d\|\le\|a_n\|\le2^{-k}.$ 
Since $\Lambda(d)=\Lambda(a_n-a_nP)$, the triangle inequality gives
\[ 
 \|\Lambda(b_k)-\eta\|
 \le \|\Lambda(b_k-d)\|
      +\|\Lambda(a_n)-\eta\|
      +\|\Lambda(a_nP)\|
 \le 2^{-k}. 
\]
This proves the required estimates and completes the induction.

The series $X=\sum_{k\ge1}(-1)^{k+1}b_k$ converges absolutely in
operator norm. Set
\[
 d_k=b_{2k-1}-b_{2k},\qquad
 e_k=b_{2k+1}-b_{2k}.
\]
Within each of these two sequences the right supports are dominated by mutually
orthogonal projections (namely,  $p_{2k-1}+p_{2k}$ and $p_{2k}+p_{2k+1}$,
respectively). Moreover,
\[
 \|d_k\|,\ \|\Lambda(d_k)\|\le3 \cdot 2^{-2k},\qquad
 \|e_k\|,\ \|\Lambda(e_k)\|\le\tfrac32 \cdot 2^{-2k}.
\]
Indeed the inequalities here for $d_k, e_k$ follow from the first inequality in (\ref{eq:gliding-hump-bounds}), 
while the  second inequality  there and the triangle inequality gives the  matching inequalities involving $\Lambda$.
Thus Lemma~\ref{lem:orthogonal-GNS-series} applies to both sequences.  We have 
\[
 X=\sum_kd_k=b_1+\sum_ke_k.
\]
Applying $\Lambda$ to these we get similar norm-convergent 
Hilbert-space series. But
\[
 \Lambda(b_1)+\sum_{k=1}^{N}\Lambda(e_k)
       -\sum_{k=1}^{N}\Lambda(d_k)=\Lambda(b_{2N+1})\longrightarrow\eta.
\]
The left side tends to $\Lambda(X)-\Lambda(X)=0$, so $\eta=0$.
\end{proof}

\begin{corollary}[Lower semicontinuity at finite-weight points]
\label{cor:Hsigma-finite-lsc}
Assume {\rm (H$_\sigma$)}. If $x_n,x\in M_+$, $\|x_n-x\|\to0$, and
$\Phi(x)<\infty$, then
\[
 \Phi(x)\le\liminf_n\Phi(x_n).
\]
\end{corollary}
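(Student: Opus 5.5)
The plan is to combine Theorem~\ref{thm:Hsigma-GNS-closable} with Lemma~\ref{lem:closed-GNS}(ii). That lemma shows that closability of $\Lambda_\Phi$ gives norm lower semicontinuity at every point $x\in M_+$ with $\Phi(x)<\infty$. Under (H$_\sigma$), Theorem~\ref{thm:Hsigma-GNS-closable} supplies exactly this closability. So the corollary should follow by citing these two results. Since the proof of Lemma~\ref{lem:closed-GNS}(ii) was written for an arbitrary weight, it is worth checking that its argument uses nothing beyond closability.

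I would retrace that argument in the present setting as follows. Put $L=\liminf_n\Phi(x_n)$. If $L=\infty$ there is nothing to prove. Otherwise I pass to a subsequence along which $\Phi(x_n)\to L$ with every $\Phi(x_n)<\infty$.

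Set $a_n=x_n^{1/2}$ and $a=x^{1/2}$. Norm continuity of the square root on the positive cone gives $a_n\to a$ in norm. Also $a_n\in\mathfrak n_\Phi$ and $\|\Lambda(a_n)\|^2=\Phi(x_n)$ stays bounded. Since $\Phi(x)<\infty$, we also have $a\in\mathfrak n_\Phi$.

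Next I extract a weakly convergent subsequence $\Lambda(a_n)\rightharpoonup\eta$ in $H_\Phi$. Mazur's lemma then gives convex combinations $b_j$ of tails of $(a_n)$ with $\Lambda(b_j)\to\eta$ in norm. These still satisfy $b_j\to a$ in operator norm.

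Now I apply Theorem~\ref{thm:Hsigma-GNS-closable} to the sequence $b_j-a\in\mathfrak n_\Phi$. This sequence tends to $0$ in norm, and $\Lambda(b_j-a)\to\eta-\Lambda(a)$. Closability therefore forces $\eta=\Lambda(a)$.

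Finally, weak lower semicontinuity of the Hilbert norm gives
\[
\Phi(x)=\|\Lambda(a)\|^2=\|\eta\|^2\le\liminf_n\|\Lambda(a_n)\|^2=L.
\]

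The only point needing care is the use of the hypothesis $\Phi(x)<\infty$. It is exactly what puts $a$ in the domain $\mathfrak n_\Phi$, so that closability, rather than closedness, is enough. Without it one would need the stronger closedness of Lemma~\ref{lem:closed-GNS}(iii), which (H$_\sigma$) is not yet known to give. Everything else is routine once Theorem~\ref{thm:Hsigma-GNS-closable} is available.
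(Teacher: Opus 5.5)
Your proposal is correct and is exactly the paper's argument: the paper proves this corollary by citing Theorem~\ref{thm:Hsigma-GNS-closable} for closability of $\Lambda_\Phi$ under (H$_\sigma$) and then applying Lemma~\ref{lem:closed-GNS}(ii). Your retracing of the Mazur-lemma argument matches the paper's proof of that lemma, including your observation that $\Phi(x)<\infty$ places $x^{1/2}$ in $\mathfrak n_\Phi$, so that closability suffices.
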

\begin{proof} Theorem~\ref{thm:Hsigma-GNS-closable} gives closability, so 
Lemma~\ref{lem:closed-GNS}(ii) applies. \end{proof}

{\bf Remark.} 
Theorem~\ref{thm:Hsigma-GNS-closable} is strengthened in \cite{Bd}: under
$(H_\sigma)$ the GNS map is closed, not merely closable.
Consequently norm lower semicontinuity holds at every positive
element, without the finite-weight restriction in Corollary~\ref{cor:Hsigma-finite-lsc}. 
The proof of that strengthening uses Lemma~\ref{lem:orthogonal-GNS-series} and 
Theorem~ref{thm:Hsigma-GNS-closable}, so their independent proofs here remain essential.

\begin{proposition}\label{prop:finite-lsc-plus-R}
For a weight $\Phi$ on a von Neumann algebra, norm lower semicontinuity at
every $\Phi$-finite point together with {\rm (R)} implies norm
lower semicontinuity of $\Phi$ on $M_+$.
\end{proposition}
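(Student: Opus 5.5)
Let $x_n\to x$ in norm in $M_+$, and put $L=\liminf_n\Phi(x_n)$. We must show $\Phi(x)\le L$. We may assume $L<\infty$. By (R) it suffices to show $\Phi(y)\le L$ for every $0\le y\le x$ with $\Phi(y)<\infty$. So the plan is to compare $y$ with suitable elements below $x_n$ and apply the assumed finite-point lower semicontinuity at $y$.

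Fix such a $y$. The task is to manufacture $y_n\in M_+$ with $y_n\le x_n$ and $\|y_n-y\|\to0$. Monotonicity then gives $\Phi(y_n)\le\Phi(x_n)$. Lower semicontinuity at the $\Phi$-finite point $y$ then gives
\[
\Phi(y)\le\liminf_n\Phi(y_n)\le\liminf_n\Phi(x_n)=L .
\]
Taking the supremum over $y$ and using (R) yields $\Phi(x)\le L$, which is the claim.

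The main obstacle is building $y_n$, since $y\le x$ does not give $y\le x_n$. I would try a shift and a compression. Let $\varepsilon_n=\|x_n-x\|\to0$. Then $x\le x_n+\varepsilon_n1$, but adding $\varepsilon_n1$ is not allowed, so instead I would use a factorization. Since $0\le y\le x$, standard theory gives $y^{1/2}=u x^{1/2}$ for a contraction $u$; more precisely one may take $u=\lim_k y^{1/2}(x+1/k)^{-1/2}$, or avoid limits by working with $x+\delta$. A cleaner route is to put
\[
y_{n}=y^{1/2}(x+\delta_n)^{-1/2}\,x_n\,(x+\delta_n)^{-1/2}y^{1/2},\qquad \delta_n=\varepsilon_n^{1/2}.
\]
Then the contraction $c=(x+\delta_n)^{-1/2}y^{1/2}$ satisfies $\|c\|\le1$, because $y\le x+\delta_n$. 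Hence $c^*x_nc\le c^*(x+\varepsilon_n)c\le\cdots$, but what is actually needed is $y_n\le x_n$, and conjugation by a contraction does not preserve domination by $x_n$ itself.

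So I would instead use $y_n=x_n^{1/2}vv^*x_n^{1/2}$-type elements. Write $y=x^{1/2}ww^*x^{1/2}$ with $\|w\|\le1$. Set $w_\delta=(x+\delta)^{-1/2}y^{1/2}$, so that $y=(x+\delta)^{1/2}w_\delta w_\delta^*(x+\delta)^{1/2}$ and $\|w_\delta\|\le1$. Then define
\[
y_n=x_n^{1/2}w_{\delta}w_{\delta}^*x_n^{1/2}\le x_n .
\]
Norm continuity of the square root gives $x_n^{1/2}\to x^{1/2}$. Choosing $\delta=\delta_n\to0$ slowly, and noting $\|(x+\delta)^{1/2}-x^{1/2}\|\le\delta^{1/2}$, we get
\[
\|y_n-y\|\le 2\bigl(\|x_n^{1/2}-x^{1/2}\|+\delta_n^{1/2}\bigr)\,\sup_k\|x_k\|^{1/2}\to0 .
\]
This uses only the uniform bound $\|w_\delta\|\le1$. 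This completes the construction, and the remaining steps are the routine ones above.
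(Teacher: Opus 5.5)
Your argument is correct and is essentially the paper's proof. The paper uses Douglas factorization to write $y=x^{1/2}cx^{1/2}$ with one fixed positive contraction $c\in M$ and sets $y_n=x_n^{1/2}cx_n^{1/2}$; you instead regularize with the invertible $x+\delta_n$ to get the explicit contractions $w_{\delta_n}w_{\delta_n}^*=(x+\delta_n)^{-1/2}y(x+\delta_n)^{-1/2}$, which avoids the Douglas lemma at the cost of an extra $\delta_n^{1/2}$ error term (any positive $\delta_n\to0$ works, and the constant in your estimate should be $\|x_n\|^{1/2}+(\|x\|+\delta_n)^{1/2}$ rather than $2\sup_k\|x_k\|^{1/2}$, which changes nothing).
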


\begin{proof} 
Suppose $x_n\to x$ in norm, with $x_n,x\ge0$. Fix $0\le y\le x$
with $\Phi(y)<\infty$. Douglas factorization in $M$ (see e.g.\ 
\cite[Proposition 6.18]{Mos}) gives a positive
contraction $c\in M$ with $y=x^{1/2}cx^{1/2}$. Put
$y_n=x_n^{1/2}cx_n^{1/2}$. Then $0\le y_n\le x_n$ and $y_n\to y$
in norm by the well known norm continuity of the square root on $M_+$. The assumed lower semicontinuity at $y$ gives
\[
 \Phi(y)\le\liminf_n\Phi(y_n)\le\liminf_n\Phi(x_n).
\]
Now take the supremum over finite-weight $y\le x$, and then use property (R) to obtain our assertion.
\end{proof}

\begin{theorem} \label{affH} Let $\Phi$ be a weight on a von Neumann algebra.  
If {\rm (H)}  holds then {\rm (R)} is equivalent to strong semifiniteness {\rm (P)} for $\Phi$, and implies normality of $\Phi$ and $(F^n)$. 
That is, Haagerup's Problem {\rm 1.11} has an affirmative solution for strongly semifinite weights. \end{theorem}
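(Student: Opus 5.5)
Assume (H) throughout. (H) implies (H$_\sigma$), so Corollary~\ref{cor:Hsigma-finite-lsc} gives norm lower semicontinuity of $\Phi$ at every point of finite weight. By Proposition~\ref{prop:H-projection-spectral}, $\Phi$ is also completely additive on projections and satisfies (FS$_{\rm c}$). The plan has three steps:
\begin{enumerate}[label=(\arabic*)]
\item prove (R)$\Leftrightarrow$(P);
\item deduce normality from (R) using the machinery just developed;
\item read off $(F^n)$.
\end{enumerate}

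\textbf{Step (1).} For (R)$\Rightarrow$(P), let $0\ne e$ be a projection. If $\Phi(e)<\infty$, take $f=e$. If $\Phi(e)=\infty$, then (R) gives some $0\le y\le e$ with $0<\Phi(y)<\infty$; in particular $y\ne0$. Put $f=\Chi_{[\|y\|/2,\infty)}(y)$. Then $f\ne0$, and $f\le e$ because $y=eye$ forces the spectral projections of $y$ on sets away from $0$ to lie under $e$. Also $\Phi(f)\le (2/\|y\|)\Phi(y)<\infty$.

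For (P)$\Rightarrow$(R), fix $x\in M_+$ and let $q_k=\Chi_{[1/k,\infty)}(x)$, so that $q_kx\uparrow x$ in $W^*(1,x)$ and, by (H), $\Phi(q_kx)\uparrow\Phi(x)$. It therefore suffices to treat a fixed $x_k=q_kx$, which satisfies $x_k\ge q_k/k$.
\begin{itemize}
\item Apply (P) and a Zorn argument to get a maximal orthogonal family $(f_i)$ of nonzero finite-weight subprojections of $q_k$. By (P) and maximality, $\sum_i f_i=q_k$.
\item The elements $f_ix_kf_i$ are the natural candidates, but they do not commute with $x_k$, which is a problem.
\item Instead, work inside the abelian algebra $A=W^*(1,x)$. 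There one wants projections of finite weight increasing to $q_k$. Using complete additivity on $A$ (via (H)), it suffices that $\Phi|_{A}$ is semifinite on projections. This reduces to the abelian case of Corollary~\ref{isno}, where a normal $\Phi|_A=I_\nu$ has (R) precisely when its non-semifinite part is trivial.
\end{itemize}
The main obstacle is exactly here: (P) gives finite-weight subprojections in $M$, not in $A$. To push (P) into $A$, I would first try the following. Given $f\le q_k$ with $\Phi(f)<\infty$, consider $y=x_k^{1/2}fx_k^{1/2}\le x_k$. Then $\Phi(y)$ is finite, since $y\le\|x\|\,$(something comparable to $f$ after conjugation) is not immediate, so instead use $f x_k f\le\|x\|f$. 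Combine this with the Douglas-factorization trick of Proposition~\ref{prop:finite-lsc-plus-R} to transport finiteness. Then sum over the maximal family $(f_i)$ using complete additivity on the abelian algebra generated by the $f_i$ to recover $\Phi(x_k)$ as a supremum of finite values.

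\textbf{Step (2).} With (R) in hand, Proposition~\ref{prop:finite-lsc-plus-R} upgrades lower semicontinuity at finite points to norm lower semicontinuity on all of $M_+$. Corollary~\ref{cor:recovery-normal}, applicable because $\Phi$ is completely additive on projections and has (R), then shows that $\Phi$ is normal. It also shows that $\Phi$ is a sum of bounded normal positive functionals.

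\textbf{Step (3).} Normality yields $(F^n)$ by Haagerup's theorem, as recorded in Theorem~\ref{thm:normal-projection-detection}. Since (P) is strong semifiniteness, this gives the affirmative answer to Problem~1.11 for such weights.
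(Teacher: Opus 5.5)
Your proof of (R)$\Rightarrow$(P), and your Steps (2)--(3) (Proposition~\ref{prop:finite-lsc-plus-R}, then Corollary~\ref{cor:recovery-normal}, then Theorem~\ref{thm:normal-projection-detection}), match the paper. The direction (P)$\Rightarrow$(R) is \emph{not} proved, and the ``strongly semifinite'' conclusion depends on it. You stop at the obstacle you identify and offer only a vague Douglas-factorization transport. Moreover, the route you propose cannot work in general, because finite-weight projections need not exist in $A=W^*(1,x)$ at all. Take the trace on $B(L^2[0,1])$, which is normal and satisfies (P) and (R), and restrict it to the diffuse masa $L^\infty[0,1]$. Every nonzero projection of $A$ there has infinite rank, so $\Phi|_A$ has no nonzero finite-weight projections, and (R) fails inside $A$ at $x=1$. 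The Douglas idea also stalls, as you noticed: for a non-tracial weight, $\Phi(b^*b)$ is not controlled by $\Phi(bb^*)\le\|x\|\Phi(f)$.

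The missing observation is that (R) only asks for $0\le y\le x$ in the operator order, not $y\in W^*(1,x)$. So the failure of the $f_i$ to commute with $x$ is harmless. The paper first proves recovery on projections. For a projection $e$, a maximal orthogonal family $(f_i)$ of nonzero finite-weight subprojections has $\sum_i f_i=e$ by (P). Complete additivity then gives $\Phi(e)=\sup_F\Phi(\sum_{i\in F}f_i)$ over finite $F$; here (H) is applied in the abelian algebra generated by the $f_i$, not in $W^*(1,x)$. For $x$ with $\Phi(x)=\infty$, (FS$_{\rm c}$) gives $s=\sum_j\alpha_jp_j\le x$ with $\Phi(s)>K$. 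If $\Phi(s)=\infty$, some $p_j$ has infinite weight, so there is a finite-weight $q\le p_j$ with $\alpha_j\Phi(q)>K$, and $y=\alpha_jq\le\alpha_jp_j\le s\le x$.

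In your own setup this is even quicker. If some $\Phi(x_k)=\infty$, then $\Phi(q_k)=\infty$ because $x_k\le\|x\|q_k$. The elements $y=k^{-1}\sum_{i\in F}f_i\le k^{-1}q_k\le x_k\le x$ then have finite but unbounded weights. If instead all $\Phi(x_k)<\infty$, take $y=x_k$ and use $\Phi(x_k)\uparrow\Phi(x)$.
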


\begin{proof}
Condition (R) implies (P) for every weight.  Indeed let $e\ne0$ be a projection.
If $\Phi(e)<\infty$, take $f=e$. Otherwise, (R) provides
$0\le y\le e$ with $0<\Phi(y)<\infty$. In particular $y\ne0$, so for
some $\delta>0$ the projection
$f=\Chi_{[\delta,\infty)}(y)$ is nonzero. We have $f\le e$ and
$\delta f\le y$.  Therefore $\Phi(f)\le\delta^{-1}\Phi(y)<\infty$.

Next we show that if $\Phi$ satisfies (H) and (P), then it satisfies (R).
Fix a projection $e$.  By Zorn's lemma choose a maximal orthogonal family
$(f_i)$ of nonzero finite-weight projections below $e$ (there exist such projections by (P)).  If
$\bigvee_i f\, _i<e$, condition (P) supplies another such projection,
a contradiction.  Hence $e=\bigvee_i f_i$.  Since (H) implies 
complete additivity on projections, we have $\Phi(e)=\sum_i\Phi(f_i)$, the supremum of the finite partial sums.  Thus
$$\Phi(e)=\sup\{\Phi(q):q\le e,\ q\text{ a projection},\ \Phi(q)<\infty\}.$$
Now let $\Phi(x)=\infty$ and $K>0$.  By (FS$_{\rm c}$)  on $W^*(1,x)$, choose
$s=\sum_{j=1}^m\alpha_jp_j\le x$ with $\Phi(s)>K$, where the projections here are mutually orthogonal.  If $\Phi(s)<\infty$,
take $y=s$.  Otherwise some $p_j$ has infinite weight, and the last displayed equation 
gives a finite-weight $q\le p_j$
with $\alpha_j\Phi(q)>K$.  Then $y=\alpha_jq\le x$ has
$K<\Phi(y)<\infty$.  This proves (R) (the finite-valued
case is immediate by taking $y=x$).

It remains to prove normality and $(F^n)$. Since (H) implies
(H$_\sigma$), Corollary~\ref{cor:Hsigma-finite-lsc} gives norm
lower semicontinuity at every positive element of finite
$\Phi$-weight. Under either of the equivalent conditions
(R) and (P),  Proposition~\ref{prop:finite-lsc-plus-R}
therefore gives norm lower semicontinuity on all of $M_+$.

Moreover, (H) implies complete additivity on projections:
as usual apply normality on the abelian von Neumann algebra generated
by the family of projections.
Thus Corollary~\ref{cor:recovery-normal}, together with (R),
shows that $\Phi$ is normal. Finally,
Theorem~\ref{thm:normal-projection-detection} gives $(F^n)$.
\end{proof}

\begin{theorem} \label{sv} {\rm (Sequential version.)}\ Let $\Phi$ be a weight on a von Neumann algebra.  
If {\rm (H$_\sigma$)} and  {\rm (R)}  hold then $\Phi$ is $\sigma$-normal. \end{theorem}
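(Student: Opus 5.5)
The plan is to follow the proof of Theorem~\ref{affH}, but stop at $\sigma$-normality, which needs only countable additivity and $S_+(M)$ in place of normal functionals. The decisive tool is Theorem~\ref{thm:recovery-sigma}. Under countable additivity on projections and (R), that theorem makes $\sigma$-normality equivalent to norm lower semicontinuity on $M_+$. So it suffices to verify its two hypotheses and then establish norm lower semicontinuity.

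First, (H$_\sigma$) gives countable additivity on projections. If $(p_n)$ are mutually orthogonal projections, the unital von Neumann algebra $A=W^*(1,p_1,p_2,\ldots)$ is abelian. $\sigma$-normality of $\Phi|_{A_+}$, applied to the increasing finite partial sums $\sum_{n\le k}p_n\uparrow\sum_np_n$, yields $\Phi(\sum_np_n)=\sum_n\Phi(p_n)$. Hence $\Phi$ satisfies the hypotheses of Theorem~\ref{thm:recovery-sigma}.

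Second, we prove norm lower semicontinuity on all of $M_+$. Corollary~\ref{cor:Hsigma-finite-lsc}, which rests on the GNS closability of Theorem~\ref{thm:Hsigma-GNS-closable} under (H$_\sigma$), gives $\Phi(x)\le\liminf_n\Phi(x_n)$ whenever $x_n\to x$ in norm and $\Phi(x)<\infty$. Proposition~\ref{prop:finite-lsc-plus-R} then upgrades this, together with (R), to lower semicontinuity at every point of $M_+$. The mechanism is Douglas factorization: each finite-weight $y\le x$ is written as $y=x^{1/2}cx^{1/2}$, and one compares with $y_n=x_n^{1/2}cx_n^{1/2}\le x_n$. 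Taking the supremum over such $y$ and applying (R) recovers $\Phi(x)$. With norm lower semicontinuity in hand, the implication (ii)$\Rightarrow$(i) of Theorem~\ref{thm:recovery-sigma} shows that $\Phi$ is $\sigma$-normal.

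The substantive work is already contained in the earlier results. In Theorem~\ref{thm:Hsigma-GNS-closable}, closability needs only (H$_\sigma$): the gliding-hump construction uses only commuting families, and Lemma~\ref{lem:orthogonal-GNS-series} likewise needs only (H$_\sigma$). In Proposition~\ref{prop:sigma-finite-values}, countable additivity and Theorem~\ref{thm:main} make the minorants $\omega_i$ $\sigma$-normal, through Lemma~\ref{lem:finite-domain-part}.

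The step I expect to be the main obstacle is checking that this chain never silently uses complete additivity, that is, (H) rather than (H$_\sigma$). Unlike Theorem~\ref{affH}, we cannot pass through property (P) and a maximal orthogonal family. That would need complete additivity, and this is exactly why (R) is assumed outright here. If Proposition~\ref{prop:sigma-finite-values} is not directly available, the fallback is to build the Pedersen--Takesaki decomposition \eqref{eq:norm-lsc-sum} from norm lower semicontinuity. We would then cut each $\rho_i$ down to its finite-domain part via Lemma~\ref{lem:finite-domain-part}. The resulting $\Theta=\sum_i\omega_i$ is $\sigma$-normal, satisfies $\Theta\le\Phi$, and agrees with $\Phi$ at all finite-weight points. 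By (R), $\Theta=\Phi$.
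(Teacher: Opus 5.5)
Your proposal is correct and is essentially the paper's own proof. Countable additivity on projections comes from restricting to $W^*(1,p_1,p_2,\ldots)$; Corollary~\ref{cor:Hsigma-finite-lsc}, Proposition~\ref{prop:finite-lsc-plus-R} and (R) give norm lower semicontinuity on all of $M_+$; and (ii)$\Rightarrow$(i) of Theorem~\ref{thm:recovery-sigma} then yields $\sigma$-normality, while your fallback paragraph just unpacks that theorem's proof via Pedersen--Takesaki and Lemma~\ref{lem:finite-domain-part}.
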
 

\begin{proof} By an argument  similar to the last proof, 
Corollary~\ref{cor:Hsigma-finite-lsc} and
Proposition~\ref{prop:finite-lsc-plus-R} give norm lower
semicontinuity on $M_+$. Also, (H$_\sigma$) implies countable
additivity on projections, by restriction to the abelian
algebra generated by the countable 
family. Theorem~\ref{thm:recovery-sigma}
therefore gives $\sigma$-normality.
\end{proof}

{\bf Remark.} The direct proofs of Theorems~\ref{affH} and~\ref{sv} use (P) or (R).
In the last section we pt that $(H)$ alone
implies normality and that (H$_\sigma$) alone implies
$\sigma$-normality. (The sequential version may be moved to the present paper.) 
. 

\begin{corollary} \label{cofi} 
Let $\Phi$ be a strongly semifinite weight on a von Neumann algebra $M$
satisfying {\rm (H$_\sigma$)}. If either $\Phi$ is
faithful or $M$ is $\sigma$-finite, then $\Phi$ is normal.
\end{corollary}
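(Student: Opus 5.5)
By Theorem~\ref{sv}, strong semifiniteness (P) combined with (H$_\sigma$) should already make $\Phi$ $\sigma$-normal, once we also know (R). The plan is therefore three steps. First recover (R) from (P) and (H$_\sigma$). This gives $\sigma$-normality. Then upgrade $\sigma$-normality to normality using faithfulness or $\sigma$-finiteness.

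\textbf{Step 1: (P) and (H$_\sigma$) give (R).} Rerun the proof of (P)$\Rightarrow$(R) in Theorem~\ref{affH}. The only place complete additivity was used is the identity $\Phi(e)=\sum_i\Phi(f_i)$ for a maximal orthogonal family $(f_i)$ of nonzero finite-weight projections below $e$. Under (H$_\sigma$) we have only countable additivity on projections, so we need this family to be countable or the sum to be trivially infinite.

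\textbf{Step 2: countability of the family.}
\begin{itemize}
\item If $M$ is $\sigma$-finite, every orthogonal family of nonzero projections is countable, so the family $(f_i)$ is countable. Countable additivity then gives the identity. The rest of the (R) argument, which uses (FS$_{\rm c}$) (available from (H$_\sigma$) by Proposition~\ref{prop:H-projection-spectral}), goes through verbatim.
\item If $\Phi$ is faithful, each $\Phi(f_i)>0$. When $\Phi(e)<\infty$, only countably many $f_i$ can have weight at least $1/m$, so the family is countable. When $\Phi(e)=\infty$, we need the finite-weight subprojections of $e$ to have unbounded weights. If the family is uncountable, some $1/m$ level contains infinitely many $f_i$, and finite sums of these are finite-weight projections below $e$ of arbitrarily large weight. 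If the family is countable, countable additivity again applies.
\end{itemize}
In either case we obtain the displayed projection-recovery formula, and hence (R). Theorem~\ref{sv} then gives that $\Phi$ is $\sigma$-normal.

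\textbf{Step 3: from $\sigma$-normal to normal.} I expect this to be the main obstacle.
\begin{itemize}
\item In the $\sigma$-finite case, increasing nets can be replaced by sequences. Take a faithful normal state $\theta$ and argue as in Lemma~\ref{lem:selection}: any bounded increasing net has an increasing sequence with the same supremum. Then $\sigma$-normality yields normality, after checking that the supremum of $\Phi$ along the net is at least its value along the chosen sequence (which holds since the sequence is drawn from the net).
\item In the faithful case, I would show complete additivity on projections and invoke Corollary~\ref{cor:recovery-normal}, using (R) and the norm lower semicontinuity from Theorem~\ref{thm:recovery-sigma}. For an arbitrary orthogonal family $(q_j)$ with $\Phi(\sum q_j)<\infty$, faithfulness forces the family to be countable by the same $1/m$ counting, so countable additivity suffices. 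If $\Phi(\sum_j q_j)=\infty$, we must show the finite partial sums have unbounded weight. If the family is uncountable this follows from faithfulness as above. If it is countable, countable additivity handles it.
\end{itemize}
Thus $\Phi$ is completely additive on projections, hence normal by Corollary~\ref{cor:recovery-normal}.
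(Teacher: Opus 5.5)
Your argument is correct, but it is organized differently from the paper's. The paper first proves complete additivity on projections in both cases. This is automatic when $M$ is $\sigma$-finite, and in the faithful case it is exactly the $1/m$ counting argument of your Step~3. Combined with (FS$_{\rm c}$), Proposition~\ref{prop:H-projection-spectral} then upgrades (H$_\sigma$) to (H). Theorem~\ref{affH} finishes the proof, since it already contains the maximal-family proof of (P)$\Rightarrow$(R) and the normality conclusion.

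You proceed instead in three stages:
\begin{enumerate}
\item You rerun the (P)$\Rightarrow$(R) argument using only countable additivity together with a countability/faithfulness analysis.
\item You obtain $\sigma$-normality from Theorem~\ref{sv}.
\item You upgrade to normality separately in each case. In the $\sigma$-finite case you select an increasing sequence from the net using a faithful normal state, as in Lemma~\ref{lem:selection}. In the faithful case you use complete additivity plus Corollary~\ref{cor:recovery-normal}.
\end{enumerate}
All of these steps check out.

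The cost is redundancy. You do the counting argument twice, once for (R) and once for additivity. Moreover, once you have complete additivity in Step~3, your Steps~1--2 and the detour through Theorem~\ref{sv} become unnecessary, because (H) and Theorem~\ref{affH} already give normality.

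What your route buys is a more explicit picture of the intermediate stages. It shows that (R) and $\sigma$-normality hold under these hypotheses, and your $\sigma$-finite upgrade does not use projection additivity at all.

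Your use of faithfulness or $\sigma$-finiteness already in Step~2 is genuinely needed. The paper's weight on $\ell^\infty(I)$, with $I$ uncountable, vanishes on countably supported elements and is infinite otherwise. It satisfies (P) and (H$_\sigma$) but fails (R).
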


\begin{proof}
Condition {\rm (H$_\sigma$)} gives countable additivity on
projections and $(\mathrm{FS}_{\rm c})$. We show that
we have complete additivity on projections.
If $M$ is $\sigma$-finite, every orthogonal family of
nonzero projections is countable, so this is immediate.

If  $\Phi$ is faithful let
$(p_i)_{i\in I}$ be an orthogonal family of projections.
Put $p=\bigvee_i p_i$. If $\sum_i\Phi(p_i)=\infty$,
monotonicity and finite additivity give $\Phi(p)=\infty$.
If the sum is finite, faithfulness implies that
$\Phi(p_i)>0$ whenever $p_i\ne0$. Thus only countably
many $p_i$ are nonzero.  Countable additivity gives
$\Phi(p)=\sum_i\Phi(p_i).$ 
Hence $\Phi$ is completely additive on projections
in either case.
Together with $(\mathrm{FS}_{\rm c})$,
Proposition~\ref{prop:H-projection-spectral} now gives
(H). Since (P) is assumed, Theorem \ref{affH}  implies
that $\Phi$ is normal.
\end{proof}

{\bf Remark.} The Appendix 
removes the
strong-semifiniteness hypothesis from this corollary. If a weight
satisfies (H$_\sigma$) and either the algebra is $\sigma$-finite
or the weight is faithful, then it is normal. 

\bigskip

For a von Neumann algebra $M$ let
$E_+(M)$ denote the bounded positive linear functionals on $M$ that are
sequentially weak$^*$ continuous.  If 
$$\Phi(x)=\sum_{i\in I}\omega_i(x)\quad(x\in M_+),
 \quad \omega_i\in E_+(M),$$ then by considering partial sums we have
 $$\Phi(x)=\sup_{\omega\in\mathcal F}\omega(x)\quad x\in M_+, \; \; \; 
  \mathcal F\subseteq E_+(M).$$
 If $\Phi$ satisfies the latter condition then it is easy to see that 
 $\Phi$ is sequentially weak$^*$ lower semicontinuous on $M_+$.
Indeed if  $x_n\to x$ weak$^*$ in $M_+$, then for every
$\omega\in\mathcal F$,
\[
 \omega(x)=\lim_n\omega(x_n)\le\liminf_n\Phi(x_n).
\]
Then take the supremum over $\omega$.

We give a condition under which the three conditions in the last paragraph are equivalent:

\begin{theorem}[Sequential representations for weights with {\rm (R)}]
\label{thm:SF-representation}
Let $M$ be any von Neumann algebra and let $\Phi$ be a weight on $M$
that  satisfies  
{\rm (R)}. The following conditions
are equivalent:
\begin{enumerate}[label=\textup{(\roman*)}]
\item $\Phi$ is sequentially weak$^*$ lower semicontinuous on $M_+$.
\item There is a family $\mathcal F\subseteq E_+(M)$ such that
\[
 \Phi(x)=\sup_{\omega\in\mathcal F}\omega(x),\qquad x\in M_+.
\]
\item There is a family $(\omega_i)_{i\in I}\subseteq E_+(M)$ such that
\[
 \Phi(x)=\sum_{i\in I}\omega_i(x),\qquad x\in M_+.
\]
\item $\Phi$ is $\sigma$-normal.
\end{enumerate}
The indexing families may be arbitrary. \end{theorem}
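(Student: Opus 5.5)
The plan is to prove the cycle (iii)$\Rightarrow$(ii)$\Rightarrow$(i)$\Rightarrow$(iv)$\Rightarrow$(iii). The first two implications are already recorded in the paragraph preceding the statement. There it is shown that a sum of members of $E_+(M)$ is a supremum of its finite partial sums, which again lie in $E_+(M)$. It is also shown that a supremum of sequentially weak$^*$ continuous positive functionals is sequentially weak$^*$ lower semicontinuous. Next, (i)$\Rightarrow$(iv) is the observation made before the subsection on normal projection detection: if $0\le x_n\uparrow x$ then $x_n\to x$ weak$^*$, and monotonicity together with lower semicontinuity gives $\Phi(x_n)\uparrow\Phi(x)$ (see also Corollary~\ref{cor:three-regularities}). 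So the only substantive step is (iv)$\Rightarrow$(iii).

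For (iv)$\Rightarrow$(iii), I would first note that a $\sigma$-normal weight is countably additive on projections, by applying $\sigma$-normality to the increasing finite partial sums of an orthogonal sequence. Since (R) is assumed, Theorem~\ref{thm:recovery-sigma} (i)$\Rightarrow$(iv) then yields a family $(\omega_i)_{i\in I}\subseteq S_+(M)$ with $\Phi=\sum_i\omega_i$ on $M_+$. Each $\omega_i$ is a bounded positive $\sigma$-normal functional. By Corollary~\ref{cor} it is countably additive on projections, and after normalising, Theorem~\ref{thm:main} together with the Remark on positive functionals following its proof shows that $\omega_i$ is sequentially weak$^*$ continuous, that is, $\omega_i\in E_+(M)$. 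This gives (iii) with the same family. The family is arbitrary because Theorem~\ref{thm:recovery-sigma} gives no orthogonality of supports.

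The main point to check is that Theorem~\ref{thm:recovery-sigma} really applies: its standing hypotheses are exactly countable additivity on projections and (R), and both hold here. I should also confirm that the chain inside that theorem does not secretly use more. It passes from (i) to norm lower semicontinuity via Theorem~\ref{thm:sigma-norm-lsc}, and then from norm lower semicontinuity through Pedersen--Takesaki and Lemma~\ref{lem:finite-domain-part}. That lemma uses Theorem~\ref{thm:main} on corners $p_mMp_m$ of finite weight, which is legitimate because $\Phi$ is countably additive on projections. The upgrade from $S_+(M)$ to $E_+(M)$ is then just Theorem~\ref{thm:main}, which is where the disintegration machinery enters. Hypothesis (R) is used only to pass from agreement on finite-weight elements to equality $\Theta=\Phi$ everywhere.
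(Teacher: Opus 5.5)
Your proposal is correct and follows the paper's proof essentially verbatim: (iii)$\Rightarrow$(ii)$\Rightarrow$(i) from the preceding discussion, (i)$\Rightarrow$(iv) via Corollary~\ref{cor:three-regularities}, and (iv)$\Rightarrow$(iii) by countable additivity plus (R) and Theorem~\ref{thm:recovery-sigma}, then Theorem~\ref{thm:main} to identify $S_+(M)=E_+(M)$. Invoking Corollary~\ref{cor} for the countable additivity of each $\omega_i$ is unnecessary, since $\sigma$-normality gives it directly, and a zero $\omega_i$ needs no normalising; neither point affects the argument.
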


\begin{proof} We just discussed the implications 
(iii)$\Rightarrow$(ii)$\Rightarrow$(i).  
Corollary~\ref{cor:three-regularities} gives (i)$\Rightarrow$(iv).
For (iv)$\Rightarrow$(iii), $\sigma$-normality implies countable
additivity on projections. Under (R),
Theorem~\ref{thm:recovery-sigma} gives
\[
 \Phi=\sum_{i\in I}\omega_i,\qquad \omega_i\in S_+(M).
\]
Theorem \ref{thm:main}  gives
$S_+(M)=E_+(M)$, so this is precisely (iii).
\end{proof}

{\bf Remark.}  A faithful normal, strictly semifinite weight can fail (R) (consider  $\Omega(a)=\sum_{n=1}^\infty n^2\langle ae_n,e_n\rangle$ on $B(l^2)$). The weight on \(\ell^\infty(\omega_1)\) which is $0$ if
the support is countable, and otherwise is $\infty$, is sequentially weak* lower semicontinuous and strongly semifinite, but fails (R).
According to an example constructed by Chat-GPT, even  complete additivity on projections plus \((R)\) does not imply \(\sigma\)-normality, sadly. 

\section{Some further special cases of Haagerup's problem and its sequential variant}

Since we understand exactly which weights are $\sigma$-normal  or normal in  the abelian case (see Section 
\ref{abcas}), the reader will  expect that Haagerup's problem and its sequential variant may be solved in the case of finite von Neumann algebras using the faithful normal center valued trace.   We have not included a full
 proof of this here since this would add several pages to our paper.  

\begin{theorem} \label{falgsig} 
Let $M$ be a finite von Neumann algebra and let $\Phi$ be a weight
on $M$. Then $\Phi$ is $\sigma$-normal if and only if it satisfies
(H$_\sigma$).
\end{theorem}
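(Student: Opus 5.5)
The forward direction is immediate: a $\sigma$-normal weight restricts to a $\sigma$-normal weight on every unital abelian von Neumann subalgebra. For the converse, assume (H$_\sigma$). Then $\Phi$ is countably additive on projections and satisfies (FS$_{\rm c}$) by Proposition~\ref{prop:H-projection-spectral}. It has norm-closable GNS map by Theorem~\ref{thm:Hsigma-GNS-closable}, hence it is norm lower semicontinuous at finite-weight points by Corollary~\ref{cor:Hsigma-finite-lsc}. By Theorem~\ref{sv} it suffices to establish the recovery property (R). By Proposition~\ref{prop:finite-lsc-plus-R} together with Theorem~\ref{thm:recovery-sigma}, (R) also yields a representation of $\Phi$ as a sum of members of $S_+(M)$. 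So the plan is to prove (R) using finiteness of $M$.

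To get (R), first reduce to projections. The argument in the proof of Theorem~\ref{affH}, which uses (FS$_{\rm c}$), shows that (R) follows once every projection $e$ with $\Phi(e)=\infty$ has finite-weight subprojections of arbitrarily large weight. Let $e$ be such a projection. Choose a maximal orthogonal family $(f_i)$ of nonzero finite-weight projections under $e$, and put $f=\bigvee_i f_i$.
\begin{enumerate}[label=\textup{(\alph*)}]
\item If the family is uncountable, or $\sum_i\Phi(f_i)=\infty$, then finite subsums $\sum_{i\in F} f_i$ have finite but arbitrarily large weight. The uncountable case still needs $\Phi(f_i)$ bounded below on infinitely many $i$, and this holds by pigeonhole since each $\Phi(f_i)>0$. (Zero-weight projections are finite-weight, so they may also be absorbed.)
\item Otherwise the family is countable with finite total weight, and countable additivity gives $\Phi(f)<\infty$. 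Then $e-f$ has infinite weight, and every nonzero subprojection of $e-f$ has infinite weight.
\end{enumerate}
So the core task is to rule out a nonzero projection $g$ that is \emph{purely infinite for $\Phi$}, meaning every nonzero subprojection of $g$ has $\Phi$-weight $\infty$. The remaining point is then that (H$_\sigma$) excludes such $g$, unless we can show $\Phi$ is $\infty$ on $g$-corners in a way compatible with $\sigma$-normality; that compatibility is checked directly below.

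Here finiteness of $M$ enters, via the faithful normal center-valued trace $T$. I would show that on $gMg$, which is again finite, $\Phi$ equals $\infty$ on every nonzero positive element. Suppose $0\ne y\in(gMg)_+$. Some spectral projection $\Chi_{[\delta,\infty)}(y)$ is nonzero, and it has infinite weight. Hence $\Phi(y)\ge\delta\,\Phi(\Chi_{[\delta,\infty)}(y))=\infty$. So $\Phi$ restricted to $(gMg)_+$ is the weight which is $0$ at $0$ and $\infty$ elsewhere. This weight is trivially $\sigma$-normal: if $0\le x_n\uparrow x\ne0$, then some $x_n\ne0$. Consequently we may split off the part of $\Phi$ living on such $g$ instead of excluding it.

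To make the splitting legitimate, take a maximal orthogonal family of $\Phi$-purely-infinite projections, and use the trace to show its supremum $z$ can be chosen central and the decomposition respects $\Phi$. In a finite algebra, the comparison theory via $T$ shows that purely infinite projections for $\Phi$ are closed under unitary conjugation only if $\Phi$ is unitarily invariant, which is not given. So instead I would show
\[
 \Phi(x)=\Phi(zxz)+\Phi((1-z)x(1-z)),
\]
arguing via (H$_\sigma$) on abelian subalgebras containing $x$ and $z$ where possible. I expect this to be the main obstacle. Where it fails I would fall back on the argument that, on $(1-z)M(1-z)$, property (P) holds, so Theorem~\ref{affH}-type reasoning gives (R) there. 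On the $z$ part, $\Phi$ is the trivial $\{0,\infty\}$-valued weight, and the $\sigma$-normality of the sum is then checked directly: if $x_n\uparrow x$ and $zxz\ne0$, then eventually $zx_nz\ne0$, so both sides of $\Phi(x_n)\uparrow\Phi(x)$ are $\infty$.
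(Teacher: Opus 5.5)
Your reduction to the recovery property (R) cannot succeed: (R) does not follow from (H$_\sigma$), even when $M$ is abelian and hence finite. Take $I$ uncountable, $M=\ell^\infty(I)$, and $\Phi(f)=0$ if $f$ has countable support, $\Phi(f)=\infty$ otherwise. This is the Example at the end of the Appendix and the Remark after Theorem~\ref{thm:SF-representation}. This weight is $\sigma$-normal, so it satisfies (H$_\sigma$); it even satisfies (P). Yet every finite-weight $0\le y\le 1$ has $\Phi(y)=0<\infty=\Phi(1)$, so (R) fails. Theorem~\ref{sv} gives a sufficient condition that is not available here, so any derivation of (R) from (H$_\sigma$) must contain an error.

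In your argument the error is in case (a). The claim that each $\Phi(f_i)>0$ is unjustified, since a maximal family of nonzero finite-weight projections may consist of weight-zero projections; the claim would need faithfulness. More generally, your dichotomy misses an uncountable family with only countably many positive, summable weights. There countable additivity says nothing about $\Phi(\bigvee_i f_i)$, which in the example is $\infty$ while $\sum_i\Phi(f_i)=0$. For the same reason your fallback fails, namely that (P) on $(1-z)M(1-z)$ plus reasoning as in Theorem~\ref{affH} gives (R). That proof gets $\Phi(e)=\sum_i\Phi(f_i)$ from complete additivity on projections, which comes from (H), not (H$_\sigma$). Separately, the splitting $\Phi(x)=\Phi(zxz)+\Phi((1-z)x(1-z))$ for non-central $z$ is unproven (and false for general weights), as you concede. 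Finally, the center-valued trace is announced but never used; your argument on $gMg$ is only a spectral-projection estimate.

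The paper avoids any global recovery property and works with the given sequence. For $0\le x_n\uparrow x$ it takes finite-spectrum $s_k\in W^*(1,x)$ with $s_k\uparrow x$. It uses the center-valued trace to show $\Phi(s_k)\le\sup_n\Phi(x_n)$ for each fixed $k$; this step is only sketched there. Then (H$_\sigma$) on the abelian algebra $W^*(1,x)$ gives $\Phi(x)=\sup_k\Phi(s_k)\le\sup_n\Phi(x_n)$. That comparison, localized to the sequence and using finiteness, is the missing idea. Any route through (R) or maximal orthogonal families breaks down exactly on the gap between countable and complete additivity.
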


\begin{proof} (Sketch.) 
Let $0\le x_n\uparrow x$.  Using functional calculus for $x$ 
choose increasing finite-spectrum
`simple functions' 
\[
 0\le s_k\uparrow x,\qquad s_k\in W^*(1,x).
\]
One may prove using the center valued trace that for fixed $k$ we have 
\[
 \Phi(s_k)\le \sup_n\Phi(x_n).
\]
Applying (H$_\sigma$) inside $W^*(1,x)$, we obtain
\[
 \Phi(x)=\sup_k\Phi(s_k)
 \le \sup_n\Phi(x_n)\le\Phi(x).
\]
Thus $\Phi$ is $\sigma$-normal. The converse follows by restriction.
\end{proof}

Solving Haagerups's problem 1.11 in the finite algebra case is now not much more difficult.

\begin{corollary}[Haagerup's Problem 1.11 for finite algebras]
\label{cor:haagerup-finite}
Let $M$ be a finite von Neumann algebra and $\Phi$ a weight on $M$.
If $\Phi|_{A_+}$ is normal for every unital abelian von Neumann
subalgebra $A\subseteq M$, then $\Phi$ is normal on $M$. 
\end{corollary}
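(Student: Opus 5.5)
The plan is to pass from the sequential result, Theorem~\ref{falgsig}, to normality via the projection-detection machinery of Section~\ref{weigh}. Assume (H). Then (H) implies (H$_\sigma$), so Theorem~\ref{falgsig} shows that $\Phi$ is $\sigma$-normal. Moreover, (H) gives complete additivity on projections: apply normality on the abelian von Neumann algebra generated by $1$ and an arbitrary orthogonal family of projections. It also gives (FS$_{\rm c}$), since that property is determined on abelian subalgebras. By Theorem~\ref{thm:normal-projection-detection}, it then suffices to establish $(F^n)$.

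Before doing that, I would try a more direct route. Once $\sigma$-normality is available, the goal is normality, and by Lemma~\ref{lem:selection}-type reasoning what must be controlled is increasing nets. Let $0\le x_\alpha\uparrow x$. The sketch proof of Theorem~\ref{falgsig} gives, for each finite-spectrum $s_k\in W^*(1,x)$ with $s_k\le x$, the bound $\Phi(s_k)\le\sup_n\Phi(x_n)$. I would reuse the same center-valued-trace argument with nets in place of sequences, aiming to prove
\[
\Phi(s)\le\sup_\alpha\Phi(x_\alpha)
\qquad\text{for every finite-spectrum } 0\le s\le x \text{ with } s\in W^*(1,x).
\]
Then (H) inside $W^*(1,x)$ yields $\Phi(x)=\sup_k\Phi(s_k)\le\sup_\alpha\Phi(x_\alpha)$.

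The key step is this net comparison for a single spectral projection $p$ of $x$ and $t<1$. I would look at the projections $e_\alpha=\mathbf 1_{[t,\infty)}(pxp)$-type cutoffs built from $x_\alpha$. Using the normal faithful center-valued trace $T$ on the finite algebra $M$, I would show that $T(p\wedge \text{(large spectral part of }x_\alpha))\uparrow T(p)$. This in turn lets one produce projections $f_\alpha\le p$, equivalent to subprojections of spectral projections of $x_\alpha$, with $f_\alpha\uparrow p$ in $W^*$ of an abelian subalgebra. Normality of $\Phi$ on that abelian subalgebra (hypothesis (H)) then gives $\Phi(f_\alpha)\uparrow\Phi(p)$. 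Transferring the weight across the equivalence is where finiteness enters, via unitary implementation of equivalent projections in finite algebras; $\Phi$ is not unitarily invariant, though, so at this point I would compare $\Phi(f_\alpha)$ with $\Phi$ of a compression of $x_\alpha$, for instance via $t f_\alpha\le f_\alpha x_\alpha f_\alpha$ type estimates. Summing over the finitely many spectral levels of $s$ gives the displayed inequality.

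The main obstacle is that last comparison, since a weight is not a trace. If it resists, I would fall back to the projection-detection route: show $(F^n)$ using Proposition~\ref{prop:sigma-finite-values} with complete additivity. Norm lower semicontinuity comes from $\sigma$-normality by Theorem~\ref{thm:sigma-norm-lsc}, so Proposition~\ref{prop:sigma-finite-values} produces a normal $\Theta\le\Phi$ agreeing with $\Phi$ on finite-weight elements. This gives $(F^n)$ at finite-weight projections. For infinite-weight projections $e$, I would use (FS$_{\rm c}$) together with a maximal orthogonal family of finite-weight subprojections, where finiteness of $M$ and complete additivity should force either $\Phi(e)=\sum_i\Phi(f_i)$ or a contradiction with $\sigma$-normality. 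Theorem~\ref{thm:normal-projection-detection} then concludes.
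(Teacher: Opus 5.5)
There is a genuine gap. Your reduction is sound as far as it goes: (H) gives (H$_\sigma$), hence $\sigma$-normality by Theorem~\ref{falgsig}, and it also gives complete additivity on projections and (FS$_{\rm c}$). So by Theorem~\ref{thm:normal-projection-detection} everything hinges on $(F^n)$, and neither of your two routes establishes it.

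In the direct route, the decisive step (controlling $\Phi(f_\alpha)$ by $\sup_\alpha\Phi(x_\alpha)$) is exactly where the argument breaks. $\Phi$ is not invariant under equivalence of projections, and compression does not decrease a weight. For example, in $M_2$ take $\Phi(x)=x_{11}$ and let $f$ be the projection onto $(e_1+e_2)/\sqrt2$; then $\Phi(e_{22})=0<\tfrac14=\Phi(fe_{22}f)$. So an estimate $tf_\alpha\le f_\alpha x_\alpha f_\alpha$ yields no bound of $\Phi(f_\alpha)$ in terms of $\Phi(x_\alpha)$.

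In the fallback, Proposition~\ref{prop:sigma-finite-values} gives $(F^n)$ only at finite-weight projections and at joins of orthogonal families of them (via normality of $\Theta$ and complete additivity). The dichotomy you hope for at an infinite-weight $e$ is false. On the finite algebra $M=\mathbb C$, the weight with $\Phi(\lambda)=\infty$ for $\lambda>0$ is normal, but $e=1$ has no nonzero finite-weight subprojection. The maximal family is therefore empty, so $\Phi(e)\ne\sum_i\Phi(f_i)$, and there is no contradiction with $\sigma$-normality. The remainder $g=e-\bigvee_if_i$ needs its own mechanism. Off the finite-weight corner $e_0$ one can use $t\,\omega_0((1-e_0)\cdot(1-e_0))$. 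For $0\ne g\le e_0$, however, you would need something like the Egoroff/$A_\Phi$ argument of the Appendix, and nothing in the proposal supplies this.

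The missing idea, which is how the paper proceeds, avoids $(F^n)$ altogether. Because $M$ is finite, it decomposes centrally as $1=\sum_i z_i$ with each $z_iM$ $\sigma$-finite. Then:
\begin{enumerate}
\item Condition (H) passes to $\Phi_i=\Phi|_{(z_iM)_+}$, and Theorem~\ref{falgsig} makes $\Phi_i$ $\sigma$-normal.
\item In the $\sigma$-finite algebra $z_iM$, the increasing-sequence selection from the proof of Lemma~\ref{lem:selection} works verbatim for weights and upgrades $\sigma$-normality to normality.
\item For $x\in M_+$ the algebra $W^*(1,x,\{z_i\})$ is abelian, so (H) gives $\Phi(x)=\sum_i\Phi_i(z_ix)$. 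This is a sum of normal weights, hence normal.
\end{enumerate}
You already reach for Lemma~\ref{lem:selection}-type reasoning. What is missing is the reduction to $\sigma$-finite central summands, where that reasoning actually applies.
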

\begin{proof} (Sketch.) One may choose central projections with  $1=\sum_i z_i$,
and  with each $z_iM$
$\sigma$-finite.
Let $\Phi_i$ be the restriction to $(z_iM)_+$.
The abelian hypothesis passes to $z_i M$ clearly, thus $\Phi_i$ satisfies $(H_\sigma)$
and hence is $\sigma$-normal by the previous result.   It is now easy to argue 
that it is normal, since $z_i M$ is $\sigma$-finite (e.g.\ c.f.\ the 
proof of Lemma \ref{lem:selection}).

For each $x\in M_+$, the algebra $A=W^*(1,x,\{z_i\}_i)$ 
is abelian. Normality of $\Phi$ on $A$, applied to the finite
central partial sums increasing to $x$, gives
\[
 \Phi(x)=\sup_{F\Subset I}\Phi\!\left(\sum_{i\in F}z_ix\right)
        =\sum_i\Phi_i(z_ix).
\]
Each map $x\mapsto\Phi_i(z_ix)$ is a normal weight on $M$.  
Interchanging the suprema over finite subsets and over an increasing
net proves that their sum is normal. Hence $\Phi$ is normal.
\end{proof}

We omit here the proof of the following
striking result, except to say that it can be proved with the help
of  tricks mentioned in the proof of Corollary \ref{cor}.  In particular, reducing to the finite algebra case, and to 
the properly infinite case.  The finite algebra case then needs some of the ideas for the finite algebras above.

\begin{theorem} \label{falgcon} 
Let \(M\) be a von Neumann algebra and \(\Phi:M_+\to[0,\infty]\) a weight. Define
\[
\Phi_b(y)=\Phi(b^*yb)\qquad(b\in M,\ y\in M_+).
\]
Then $\Phi$  is normal (resp.\ $\sigma$-normal) if and only if  \(\Phi_b\) is completely additive on projections
for every \(b\in M\).   
\end{theorem}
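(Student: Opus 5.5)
The forward implication is the easy one. If $\Phi$ is normal (resp.\ $\sigma$-normal), then for each $b\in M$ the map $y\mapsto b^*yb$ is positive and normal, so $\Phi_b$ is again a normal (resp.\ $\sigma$-normal) weight. It is therefore completely (resp.\ countably) additive on projections, by applying it to finite partial sums of an orthogonal family. In the sequential version I read the hypothesis as countable additivity of every $\Phi_b$.

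For the converse, the first goal is to show that the hypothesis on all the $\Phi_b$ forces (H) (resp.\ (H$_\sigma$)), via Proposition~\ref{prop:H-projection-spectral}.
\begin{enumerate}[label=\textup{(\alph*)}]
\item Taking $b=1$ gives complete (resp.\ countable) additivity of $\Phi$ on projections.
\item It remains to obtain (FS$_{\rm c}$). Fix $f\in M_+$ and set $b=f^{1/2}$. Then $E\mapsto\Phi_b(\Chi_E(f))=\Phi(f\,\Chi_E(f))$ is countably additive on Borel subsets of $\sigma(f)$, and it vanishes on $\ker f$.
\item Fix $\eps>0$ and partition $(0,\|f\|]$ into countably many Borel pieces $E_{i}$ on each of which $\alpha_i\le t\le(1+\eps)\alpha_i$, for example geometric pieces $[(1+\eps)^{-k-1},(1+\eps)^{-k})$. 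Countable additivity then gives
\[
 \Phi(f)=\sum_i\Phi\bigl(f\,\Chi_{E_i}(f)\bigr)\le(1+\eps)\sum_i\alpha_i\Phi(\Chi_{E_i}(f)).
\]
\item The finite partial sums $s=\sum_{i\le n}\alpha_i\Chi_{E_i}(f)$ lie in $W^*(1,f)$, have finite spectrum and satisfy $s\le f$. Letting $n\to\infty$ and then $\eps\to0$ yields (FS$_{\rm c}$).
\end{enumerate}
So Proposition~\ref{prop:H-projection-spectral} gives (H) (resp.\ (H$_\sigma$)), and what remains is to pass from the abelian subalgebras to $M$.

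For that step I will follow the reduction indicated before the statement: split $M$ by a central projection into a finite part and a properly infinite part, and treat each separately. The hypothesis on all $\Phi_b$ passes to corners and to central summands, so the two parts can be handled independently. Recombining is then done exactly as at the end of the proof of Corollary~\ref{cor:haagerup-finite}, using normality on $W^*(1,x,z)$ for the central projection $z$.

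On the finite summand, Theorem~\ref{falgsig} gives $\sigma$-normality from (H$_\sigma$). In the normal case, Corollary~\ref{cor:haagerup-finite} gives normality from (H).

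The properly infinite summand is where I expect the main obstacle, since (H) alone is not yet available there as a normality criterion. What I would try first is to use the extra strength of the hypothesis, which applies to every $\Phi_b$ and not only to $\Phi$. Let $0\le x_n\uparrow x$ and choose isometries $v_1,v_2$ with $v_1v_1^*+v_2v_2^*=1$, so that $M\cong M_2(M)$. The increasing sequence can then be recoded, via the factorization property of increasing sequences in properly infinite algebras mentioned in the proof of Corollary~\ref{cor}, as $x_n=c^*p_nc$ for a single $c\in M$ and an increasing sequence (resp.\ net) of projections $p_n$. With $b=c$, the required limit $\Phi(x_n)\uparrow\Phi(x)$ becomes $\Phi_c(p_n)\uparrow\Phi_c(\bigvee p_n)$. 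That is precisely countable (resp.\ complete) additivity of $\Phi_c$ applied to the orthogonal increments $p_{n+1}-p_n$.

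The delicate point is producing this factorization with a $c$ that depends only on the sequence and works simultaneously for all $n$. If the factorization only holds approximately, I would instead obtain $x_n\le c^*p_nc\le x$ up to an $\eps$-error and absorb the error using the (FS$_{\rm c}$) and norm lower semicontinuity facts already established, namely Corollary~\ref{cor:Hsigma-finite-lsc}.
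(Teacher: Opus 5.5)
Your steps (a)--(d) are correct. The choice $b=f^{1/2}$ does give (FS$_{\rm c}$), so together with $b=1$, Proposition~\ref{prop:H-projection-spectral} yields (H) (resp.\ (H$_\sigma$)). Reading the ``resp.''\ clause as countable additivity is also necessary: with complete additivity the forward direction fails for the $\sigma$-normal weight in the Example at the end of the Appendix. Your finite/properly infinite split is the route the paper indicates; the paper omits the proof. On the finite summand, Theorem~\ref{falgsig} and Corollary~\ref{cor:haagerup-finite} are the right inputs.

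The genuine gap is the properly infinite summand, which you leave open. For sequences, the factorization you call delicate is in fact exact and elementary, so the $\eps$-fallback is unnecessary. (Corollary~\ref{cor:Hsigma-finite-lsc} concerns norm convergence at finite-weight points and would not obviously absorb such errors anyway.) From your $v_1,v_2$, put $w_k=v_2^{k-1}v_1$. Since $v_1^*v_2=0$, one gets $w_k^*w_l=\delta_{kl}1$, so the projections $e_k=w_kw_k^*$ are mutually orthogonal. Set $y_1=x_1$, $y_k=x_k-x_{k-1}$, and $c=\sum_kw_ky_k^{1/2}$. This converges strongly because $\|\sum_{K<k\le N}w_ky_k^{1/2}\xi\|^2=\langle(x_N-x_K)\xi,\xi\rangle$. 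Then $e_kc=w_ky_k^{1/2}$, so $c^*(e_1+\cdots+e_n)c=x_n$ and $c^*(\sum_ke_k)c=c^*c=x$. Countable additivity of $\Phi_c$ on $(e_k)$ gives $\Phi(x)=\sum_k\Phi_c(e_k)=\sum_k\Phi(y_k)=\lim_n\Phi(x_n)$. This uses only the hypothesis on $\Phi_c$, not (H$_\sigma$). Recombining with the finite summand is just $\Phi(x)=\Phi(zx)+\Phi((1-z)x)$ for the central projection $z$.

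For nets, however, ``(resp.\ net)'' does not work. An increasing net has no orthogonal increments, so there is no analogue of $c$, and you offer no substitute. As written, the normal case is unproved on the properly infinite summand.

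You do not need the split for the normal case at all. You have already established (H), and Theorem~\ref{thm:main2}(i) in the Appendix (equivalently Corollary~\ref{cor:criteria}, (iii)$\Rightarrow$(i)) gives normality on arbitrary $M$. So your remark that (H) is not yet available as a normality criterion is mistaken. Alternatively, the sequential case gives $\sigma$-normality, Theorem~\ref{thm:sigma-norm-lsc} then gives norm lower semicontinuity, and Theorem~\ref{thm:normal-criterion} with complete projection additivity (from $b=1$) gives normality.

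For the sequential statement, by contrast, the split is genuinely needed, since the general case of Theorem~\ref{thm:main2}(ii) is deferred in the paper.
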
 

Thus  Haagerups's problem 1.11 (and its sequential variant) is equivalent to checking if (H) (resp.\ (H$_\sigma$)) implies
that such conjugated weights $\Phi_b$ have the appropriate additivity on projections.

\section{Conclusions} Our objective is to develop a theory of noncommutative measures which is closer in spirit to 
classical measure theory in the sense of being based on countable additivity rather 
than normality. In the setting of states countable additivity on projections was 
shown to be equivalent to each of sequential normality, and sequential weak* continuity. 
These continuity properties are consequently commutatively determined. When combined with the 
Gleason extension theorem, the result gives a corresponding description of normalized countably
additive projection measures whenever there is no type $I_2$ direct summand.

For weights, the present paper develops direct normality criteria,
various  constructions, and GNS closability methods, together
with sequential representation results under certain variants of semifiniteness.
The Appendix
proves that both normality
and sequential normality of arbitrary weights are commutatively
determined:
\[
 (H)\ \Longleftrightarrow\ \text{normality},
 \qquad
 (H_\sigma)\ \Longleftrightarrow\ \sigma\text{-normality}.
\]
(The sequential version may be moved to the present paper.) 
 The
normality proof uses the GNS and normal-minorant methods developed
here. The sequential proof also uses our quotient construction and
countably generated reduction.

In summary we have for states and a significant class of weights succeeded in exposing rich 
connections of the proposed sequentially based quantum measure theory to both $\sigma$-normality 
and sequential weak* convergence. Although much work remains to be done, it is clear 
that this theory is worthy of further investigation. 

Aspects that deserve further investigation at the very outset include the challenge of 
developing sequential versions of other key von Neumann algebraic results involving 
normality.  We remark that some of the results here have variants for monotone ($\sigma$-)complete 
$C^*$-algebras that we hope to investigate further with 
students.  Another natural next step would be to try and extend this ``sequentially based'' theory 
to noncommutative $L^p$ spaces and to explore the application of these ideas 
to concrete problems in for example noncommutative ergodic theory. In this regard we note that 
the very large existing noncommutative ergodic theory literature is usually or often based on sequential convergence.

\section{Appendix by first author on  the solution to Haagerup's weight problem}

This Appendix is written to be completely selfcontained, since it may be moved later to another paper.

Weights provide an operator-algebraic counterpart of integration
against measures of possibly infinite mass. In the theory of von
Neumann algebras, their normality is the monotone-continuity condition
that allows increasing approximations to pass through the integral.
Haagerup proved that normality of a weight is equivalent to weak*
lower semicontinuity on the positive cone, and also to representation
as a sum of bounded normal positive functionals
\cite[Theorem~1.8 and the introduction]{Haag}. In the same paper he asked whether normality is already
determined by restrictions to commutative von Neumann subalgebras
\cite[Problem~1.11, p.~310]{Haag}.

For bounded positive functionals the corresponding question follows
from the characterization of normality by complete additivity on
orthogonal projections. This projection criterion alone fails for
weights. Haagerup's example on $\ell^\infty(\mathbb N)$ is
\[
 \Phi((a_n))=
 \begin{cases}
   \sum_n a_n,&(a_n)\text{ has finite support},\\
   \infty,&\text{otherwise},
 \end{cases}
 \qquad (a_n)\in\ell^\infty(\mathbb N)_+.
\]
Its projection values agree with counting measure, but the
finite-coordinate truncations of $(2^{-n})$ increase to an element
of infinite $\Phi$-weight while their weights remain bounded
\cite[Remark~1.12]{Haag}. Thus additivity on projections must
not be confused with complete additivity on arbitrary positive
families. The latter is one of Haagerup's characterizations of
normality.

We use the following abbreviations:
\begin{align*}
 (H)\quad&\Phi|_{B_+}\text{ is normal for every unital abelian}
             \\[-2pt]
 &\hspace{2.5em}\text{von Neumann subalgebra }B\subseteq M;\\
 (H_\sigma)\quad&\Phi|_{B_+}\text{ is $\sigma$-normal for every such }B.
\end{align*}
Requiring the subalgebras to contain $1_M$ does not change these
conditions: one can adjoin $\mathbb C(1-e)$ to an abelian subalgebra
whose identity is a projection $e\in M$.

\begin{theorem}[Commutative determination] \label{thm:main2}
Let $\Phi$ be a weight on an arbitrary von Neumann algebra $M$.
\begin{enumerate}[label=\textup{(\roman*)}]
\item $\Phi$ is normal if and only if it satisfies $(H)$.
\item $\Phi$ is $\sigma$-normal if and only if it satisfies $(H_\sigma)$.
\end{enumerate}
Neither faithfulness nor any form of semifiniteness is assumed.
\end{theorem}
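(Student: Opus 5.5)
The plan is to reduce Theorem~\ref{thm:main2} to the semifinite cases already settled in Theorems~\ref{affH} and~\ref{sv}, by splitting off a part of $M$ on which $\Phi$ is identically infinite. The forward implications are trivial, since normality (resp.\ $\sigma$-normality) passes to restrictions. So assume (H) (resp.\ (H$_\sigma$)). By Proposition~\ref{prop:H-projection-spectral} we then have $({\rm FS}_{\rm c})$ together with complete (resp.\ countable) additivity on projections.

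Let $p$ be the supremum of all projections of finite $\Phi$-weight; equivalently, $p$ is the support of the weak$^*$ closure of the hereditary cone $\{x\in M_+:\Phi(x)<\infty\}$. The first step is the easy half of the reduction: if $\Phi(x)<\infty$ then $s(x)\le p$, so $x\in pMp$. Indeed, the spectral projections $\Chi_{[1/k,\infty)}(x)$ have weight at most $k\Phi(x)$ and increase to $s(x)$. Consequently $\Phi(x)=\infty$ whenever $x\notin(pMp)_+$. The function $x\mapsto\infty\cdot[x\notin pMp]$ preserves increasing suprema, because $pMp$ is weak$^*$ closed: if $x_\alpha\uparrow x$ and $x\notin pMp$, then some $x_\alpha\notin pMp$. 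So it suffices to prove normality (resp.\ $\sigma$-normality) of $\Phi|_{(pMp)_+}$, and this restriction still satisfies (H) (resp.\ (H$_\sigma$)) inside $pMp$.

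The second step, which I expect to be the main obstacle, is to show that $\Phi|_{pMp}$ is strongly semifinite, i.e.\ satisfies (P). Given a nonzero projection $e\le p$, pick a finite-weight projection $f$ with $fe\ne0$; such an $f$ exists because $e\le p=\bigvee$ of finite-weight projections. The problem is to transfer finiteness from $f$ to a subprojection of $e$, although weights are not unitarily invariant. My first attempt would use the GNS machinery. Since $f\in\mathfrak n_\Phi$, the left ideal property gives $\Phi(e f e)<\infty$ and $ef\in\mathfrak n_\Phi$. Note that $efe\in eMe$ is nonzero, so its spectral projections $r=\Chi_{[\delta,\infty)}(efe)\le e$ satisfy $\delta r\le efe$, hence $\Phi(r)<\infty$, and $r\ne0$ for small $\delta$. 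Thus the remaining issue reduces to the finiteness of $\Phi(efe)$. That finiteness is \emph{not} automatic, and this is exactly where the hypothesis must be used. If this direct route fails, I would instead apply the closedness of $\Lambda_\Phi$ under (H$_\sigma$), as in Theorem~\ref{thm:Hsigma-GNS-closable} and its strengthening mentioned in the Remark after Corollary~\ref{cor:Hsigma-finite-lsc}. The idea is to approximate $e$-corners by elements $ea$ with $a\in\mathfrak n_\Phi$, using a gliding-hump construction over orthogonal finite-weight projections as in Lemma~\ref{lem:orthogonal-GNS-series}, and then pass to the norm limit.

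Once (P) holds on $pMp$, Theorem~\ref{affH} gives (R), normality and $(F^n)$ for $\Phi|_{pMp}$ under (H). In the sequential case, the same argument that (H)$+$(P)$\Rightarrow$(R) works with (H$_\sigma$), provided complete additivity on projections is replaced by a countable version; Theorem~\ref{sv} then yields $\sigma$-normality. For that countable version I would first pass to the von Neumann algebra generated by a given increasing sequence, which is countably generated. Lemma~\ref{lem:countable-reduction} then splits it into $\sigma$-finite summands, on which countable additivity is complete additivity. The quotient construction of Lemma~\ref{lem:quotient} is used to glue the summands, much as in the proof of Theorem~\ref{thm:main}. Combining the finite part on $pMp$ with the $\infty$-part on the complement, as in the first step, completes the proof.
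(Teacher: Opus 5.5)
Your first step is the paper's Lemma~\ref{lem:finite-corner}. Your second step, however, is not merely the main obstacle: it is false, even for normal weights. Take $M=B(\ell^2)$ and $\Omega(a)=\sum_{n\ge1}n^2\langle ae_n,e_n\rangle$, which is the paper's own example after Theorem~\ref{thm:SF-representation}. It is normal, hence satisfies $(H)$. The projections onto $\operatorname{span}\{e_1,\dots,e_N\}$ have finite weight and increase to $1$, so $p=1$. Now take the unit vector $\xi=c\sum_n n^{-1}e_n$ and let $e$ be the rank-one projection onto $\mathbb C\xi$. Then $\Omega(e)=c^2\sum_n 1=\infty$, and the only nonzero subprojection of $e$ is $e$ itself, so (P) fails on $pMp=M$. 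Equivalently, (P) is equivalent to (R) under $(H)$ by Theorem~\ref{affH}, and $\Omega$ fails (R).

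Your direct route also contains a slip: $f\in\mathfrak n_\Phi$ gives $ef\in\mathfrak n_\Phi$, i.e.\ $\Phi(fef)<\infty$, not $\Phi(efe)<\infty$. In the example, $efe=\langle f\xi,\xi\rangle e$ has infinite weight whenever $f\xi\neq0$. GNS closedness cannot produce finite-weight subprojections that do not exist, so any reduction to Theorems~\ref{affH} and~\ref{sv} only reaches weights with (R).

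The sequential branch has a further false step: $(H_\sigma)$ together with (P) does not yield (R). Consider the weight on $\ell^\infty(\omega_1)$ that is $0$ on countably supported elements and $\infty$ otherwise (also noted after Theorem~\ref{thm:SF-representation}). It is $\sigma$-normal and strongly semifinite, with $p=1$, yet every $0\le y\le 1$ of finite weight has weight $0<\infty=\Phi(1)$. Your gluing step is left unspecified. It also cannot be modelled directly on Lemma~\ref{lem:quotient}, which needs a probability measure on the index set; for this weight, $S\mapsto\Phi(\sum_{t\in S}z_t)$ takes only the values $0$ and $\infty$.

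The missing idea is to give up finite-weight subprojections altogether. The paper uses GNS closedness under $(H_\sigma)$ (Theorem~\ref{thm:gns-hsigma-closed}) only to obtain norm lower semicontinuity on all of $M_+$ (Corollary~\ref{cor:gns-hsigma-lsc}). It then proves that norm lower semicontinuity plus complete additivity on projections implies normality (Theorem~\ref{thm:normal-criterion}). The argument runs as follows.
\begin{enumerate}
\item After the corner reduction, every nonzero projection dominates a nonzero projection in $A_\Phi=\overline{\mathfrak m_\Phi}^{\|\cdot\|}$ (Lemma~\ref{lem:projection-rich}, via the noncommutative Egoroff theorem). Such a projection may have infinite weight: in the example above $A_\Omega$ contains the compact operators, so $e\in A_\Omega$.
\item Combes's bounded-minorant representation of $\Phi|_{(A_\Phi)_+}$, positive Hahn--Banach extension to $M$, and the normal-minorant Lemma~\ref{lem:finite-domain-part} show that the envelope $\Psi=\sup\{\eta\in M_*^+:\eta\le\Phi\}$ agrees with $\Phi$ on $(A_\Phi)_+$ (Lemma~\ref{lem:envelope}). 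This envelope automatically preserves increasing suprema.
\item Complete additivity transfers $\Psi=\Phi$ to all projections.
\item When $\Psi(x)<\infty$, the spectral projections $\Chi_{[1/m,\infty)}(x)$ have finite $\Phi$-weight, which forces $x\in A_\Phi$. Hence $\Phi=\Psi$ on $M_+$, and $\Phi$ is normal.
\end{enumerate}
Note also that the paper proves (ii) here only for $\sigma$-finite $M$ (Corollary~\ref{cor:sigma-finite}). There countable additivity is complete additivity and the same route applies; the general sequential case is deferred to later work.
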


The first assertion answers Haagerup's question, and the second is
its sequential analogue (this will probably appear in the prequel paper). The distinction between the two conclusions
is essential on non-$\sigma$-finite algebras; an elementary example is
given in Section~\ref{sec:consequences}. We also prove that complete
(respectively, countable) additivity on orthogonal projections,
together with norm lower semicontinuity, characterizes normality
(respectively, sequential normality).

The starting point is the weight theory developed  in 
the earlier Sections.
This proves, in particular, that countable projection additivity,
sequential normality, and sequential weak* continuity are equivalent
for states on arbitrary von Neumann algebras. For weights it supplies
normal-minorant constructions, an orthogonal-series lemma, and
closability of the GNS map under $(H_\sigma)$. The present paper
strengthens this last conclusion from closability to closedness.
The proof uses finite-weight spectral projections to approximate an
element in the domain of the closure in the graph norm, and then
assembles orthogonally supported pieces using Lemma~6.15.
This gives norm lower semicontinuity at every
positive element, including elements of infinite weight.

The passage from norm lower semicontinuity to normality uses the
algebra $A_\Phi=\overline{\mathfrak m_\Phi}^{\|\cdot\|}$, where
$\mathfrak m_\Phi$ is the linear span of the finite-weight positive
elements. If this span is weak* dense, the one-sided noncommutative
Egoroff theorem shows that every nonzero projection contains a
nonzero subprojection in $A_\Phi$. Crucially, such a subprojection
need not have finite weight. We combine this approximation with the
bounded-minorant representation of lower semicontinuous weights from 
\cite{Combes,KV} and Lemma~6.11.
A reduction to the corner
generated by the finite-weight projections removes ordinary
semifiniteness. These arguments first give the general normality
assertion and its $\sigma$-finite sequential special case.
As mentioned above, the general sequential assertion will probably appear 
in a subsequent paper. 

Section~\ref{sec:prelim2} fixes the notation and proves the precise
semifiniteness and corner-reduction facts required below.
Section~\ref{sec:closedness} proves closedness of the GNS map.
Section~\ref{sec:projections} establishes projection approximation
in $A_\Phi$. Section~\ref{sec:normality} proves
normality. Section~\ref{sec:consequences} records the resulting criteria
and the distinction between normality and sequential normality.

\section{Preliminaries and semifiniteness}\label{sec:prelim2}

Throughout, $M$ is a complex von Neumann algebra, with predual $M_*$.
The weak* topology is $\sigma(M,M_*)$. We write Proj$(M)$ for its
projections, and $s(x)$ for the support projection of $x\in M_+$.
An orthogonal sum of projections always means its strong-operator
sum in $M$. A sum of nonnegative extended real numbers over an
arbitrary index set means the supremum of the finite subsums.
A von Neumann algebra is $\sigma$-finite if it admits a faithful
normal state, equivalently if every orthogonal family of nonzero
projections is countable. The $\sigma$-strong topology is generated
by the seminorms $x\mapsto\omega(x^*x)^{1/2}$, $\omega\in M_*^+$;
on norm-bounded sets it agrees with the strong operator topology
in a faithful normal representation.

A weight is \emph{countably additive on projections} if
\[
 \Phi\Bigl(\sum_i p_i\Bigr)=\sum_i\Phi(p_i)
\]
for every countable orthogonal family $(p_i)$; it is \emph{completely
additive on projections} if this holds for arbitrary orthogonal
families. It is \emph{faithful} if $\Phi(x)=0$ for $x\ge0$ implies
$x=0$. Norm lower semicontinuity means that
$\Phi(x)\le\liminf_n\Phi(x_n)$ whenever $x_n,x\in M_+$ and
$\|x_n-x\|\to0$; because the norm topology is metrizable, this also
characterizes lower semicontinuity using nets. Domination
$\rho\le\Phi$ always means domination at every point of $M_+$.

For a weight $\Phi$, put
\[
 \pphi=\{x\in M_+:\Phi(x)<\infty\},\qquad
 \nphi=\{a\in M:\Phi(a^*a)<\infty\},
\]
\[
 \mphi=\operatorname{span}\pphi,\qquad
 \Aphi=\overline{\mphi}^{\|\cdot\|}.
\]
Then $\nphi$ is a left ideal, $\mphi$ is a hereditary $*$-subalgebra,
$\mphi\cap M_+=\pphi$, and
$\mphi=\operatorname{span}\{b^*a:a,b\in\nphi\}$.
Its norm closure $\Aphi$ is a hereditary $C^*$-subalgebra of $M$.
The weight extends linearly to $\mphi$.
Complete the quotient of $\nphi$ by the null space of the positive
sesquilinear form $(a,b)\mapsto\Phi(b^*a)$. This gives the GNS
Hilbert space $\HH$ and the canonical linear map
\[
 \Lam:\nphi\longrightarrow\HH,
 \qquad \|\Lam(a)\|^2=\Phi(a^*a).
\]
Left multiplication induces the representation
$\pi_\Phi(x)\Lam(a)=\Lam(xa)$, $x\in M$, $a\in\nphi$.
These constructions do not require normality; see, for example,
\cite[Section~1]{KV} and  Section 6.
We say that $\Phi$ is
\emph{ordinarily semifinite} if $\mphi$ is weak* dense in $M$.

The following is well known but we include a proof for convenience: 

\begin{lemma}[Ordinary semifiniteness]\label{lem:semifinite}
For an arbitrary weight $\Phi$ on $M$, the following are equivalent:
\begin{enumerate}[label=\textup{(\roman*)}]
\item $\mphi$ is weak* dense in $M$.
\item $\displaystyle\bigvee\{e\in {\rm Proj}(M):\Phi(e)<\infty\}=1$.
\item There is an increasing net of positive contractions
$u_\alpha\in\pphi$ with $u_\alpha\uparrow1$.
\end{enumerate}
If $M$ is $\sigma$-finite, the net in \textup{(iii)} can be replaced
by an increasing sequence. No normality or faithfulness of $\Phi$
is required.
\end{lemma}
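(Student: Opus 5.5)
The plan is to prove the cycle (iii)$\Rightarrow$(i)$\Rightarrow$(ii)$\Rightarrow$(iii), with the $\sigma$-finite refinement treated separately at the end. Throughout we use that $\mphi$ is a hereditary $*$-subalgebra with $\mphi\cap M_+=\pphi$. Hence $\pphi$ is a hereditary cone, and whenever $x\in\pphi$ and $\delta>0$, the spectral projection $\Chi_{[\delta,\infty)}(x)$ satisfies $\delta\,\Chi_{[\delta,\infty)}(x)\le x$. That spectral projection therefore has finite weight.

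\emph{(iii)$\Rightarrow$(i).} Each $u_\alpha$ lies in $\mphi$, and $u_\alpha\to1$ weak$^*$ because bounded increasing nets converge $\sigma$-weakly to their supremum. Thus $1$ lies in the weak$^*$ closure of $\mphi$. Since $\mphi$ is a left (indeed two-sided hereditary) module in the sense that $x u_\alpha\in\nphi$ for $u_\alpha\in\pphi$, we use the left ideal $\nphi$, which contains $u_\alpha^{1/2}$. For every $x\in M$ we have $x u_\alpha\in\nphi$, and hence $u_\alpha x u_\alpha\in\mphi$. Separate weak$^*$ continuity of multiplication then gives $u_\alpha x u_\alpha\to x$ weak$^*$. (Weak$^*$ convergence of $u_\alpha x u_\alpha$ follows from $u_\alpha\to1$ strongly on bounded sets.) So $\mphi$ is weak$^*$ dense.

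\emph{(i)$\Rightarrow$(ii).} Let $e=\bigvee\{f:\Phi(f)<\infty\}$. For every $x\in\pphi$ and $\delta>0$, the projection $\Chi_{[\delta,\infty)}(x)$ has finite weight, so it lies below $e$. Letting $\delta\downarrow0$ gives $s(x)\le e$, hence $x=exe$. Therefore every element of $\mphi$ lies in $eMe$, which is weak$^*$ closed. Density then forces $1\in eMe$, that is, $e=1$.

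\emph{(ii)$\Rightarrow$(iii).} This is the main step, and it needs a finite-weight increasing net. Finite-weight projections are not closed under joins in an obviously weight-controlled way, but they do generate finite-weight positives. For a finite family $F$ of finite-weight projections, put $x_F=\sum_{f\in F}f\in\pphi$. We then use a Kaplansky/Pedersen-type approximate unit of the hereditary subalgebra. Set $u_F=x_F(1/|F|+x_F)^{-1}$, or use the standard approximate unit construction for the dense ideal $\mphi$ of $\Aphi$ (Pedersen's: the set $\{u\in\mphi\cap M_+:\|u\|<1\}$ is upward directed and forms an approximate unit of $\Aphi$). I would invoke exactly this fact: the open unit ball of $\pphi$ is upward directed, because it is the positive part of the open unit ball of the Pedersen-type hereditary algebra, using the standard map $x\mapsto x(1+x)^{-1}$. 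These elements lie in $\pphi$ by heredity, since $u\le x$. Let $u_\alpha$ be this directed net and $u=\sup u_\alpha$. Each finite-weight projection $f$ satisfies $f(1/n+f)^{-1}=\tfrac{n}{n+1}f\le u$, so $f\le u\le1$. Hence $u\ge\bigvee f=1$, which gives $u_\alpha\uparrow1$. The expected obstacle is verifying directedness carefully; I would copy Pedersen's proof using operator monotonicity of $t\mapsto t/(1+t)$.

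\emph{The $\sigma$-finite case.} Choose a faithful normal state $\omega$. Since $u_\alpha\uparrow1$, we have $\omega(u_\alpha)\uparrow1$. Choose indices with $\omega(u_{\alpha_n})>1-1/n$, and by directedness arrange them increasing. The supremum $v$ of the $u_{\alpha_n}$ satisfies $\omega(1-v)=0$, so faithfulness gives $v=1$, exactly as in Lemma~\ref{lem:selection}. Neither normality nor faithfulness of $\Phi$ is used anywhere.
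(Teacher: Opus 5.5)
Your proof is correct and follows essentially the paper's route. The paper uses the same cycle (i)$\Rightarrow$(ii)$\Rightarrow$(iii)$\Rightarrow$(i), the same spectral-projection argument giving $\mphi\subseteq eMe$, the same net $a(1+a)^{-1}$, and the same $\sigma$-finite extraction via a faithful normal state; your use of $u_\alpha x u_\alpha\in\nphi^*\nphi$ in place of the paper's $u_\alpha^{1/2}xu_\alpha^{1/2}\le\|x\|u_\alpha$ is only a cosmetic variation. The directedness you flag as the main obstacle disappears if, as the paper does, you index the net by $\pphi$ itself (directed because $a+b\in\pphi$) and use that $a\mapsto a(1+a)^{-1}$ is order preserving; and in the last step you should say explicitly that $f\le u\le 1$ forces $uf=f$, which is what yields $u\ge\bigvee f$.
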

\begin{proof}
Let $e$ be the join in (ii). If $x\in\pphi$, then
\[
 \Chi_{[1/n,\infty)}(x)\le nx,
 \qquad \Phi\bigl(\Chi_{[1/n,\infty)}(x)\bigr)<\infty.
\]
Their join is $s(x)$, so $s(x)\le e$ and $x\in eMe$.
Thus $\mphi\subseteq eMe$. Since $eMe$ is weak* closed,
(i) implies $e=1$.

Assume (ii). Order $\pphi$ by the operator order. It is directed,
since $a+b$ is a finite-weight upper bound for $a,b\in\pphi$.
For $a\in\pphi$ set
\[
 u_a=a(1+a)^{-1}=1-(1+a)^{-1}.
\]
Inversion reverses order on positive invertible operators, so $(u_a)$
is an increasing net of positive contractions. Also $u_a\le a$,
hence $\Phi(u_a)<\infty$. Let $u$ be its strong limit.
For every finite-weight projection $f$ and every $t>0$,
\[
 u\ge u_{tf}=\frac{t}{1+t}f.
\]
Consequently $u\ge f$. Because $u\le1$, it acts as the identity on
the range of each such $f$. Their join is $1$, so $u=1$, proving (iii).

If (iii) holds and $x\in M_+$, then
\[
 0\le u_\alpha^{1/2}xu_\alpha^{1/2}\le\|x\|u_\alpha.
\]
These elements have finite weight and converge strongly to $x$.
They are uniformly bounded, hence also converge weak* to $x$.
Since $M_+$ linearly spans $M$, (i) follows.

Finally, if $\omega$ is a faithful normal state on $M$, choose
increasing indices $\alpha_n$ with
$\omega(u_{\alpha_n})\to1$. If $v=\sup_nu_{\alpha_n}$, then
$0\le v\le1$ and $\omega(v)=1$ by normality. Faithfulness gives
$v=1$, proving the sequential assertion.
\end{proof}

\begin{lemma}[The finite-weight corner]\label{lem:finite-corner}
Let $\Phi$ be any weight and put
\[
 e=\bigvee\{f\in {\rm Proj}(M):\Phi(f)<\infty\}.
\]
Every finite-weight positive element is supported on $e$, and the
restriction $\Phi_e=\Phi|_{(eMe)_+}$ is ordinarily semifinite.
Moreover:
\begin{enumerate}[label=\textup{(\roman*)}]
\item $(H)$ and $(H_\sigma)$ pass from $\Phi$ to $\Phi_e$.
\item If $\Phi_e$ is normal (respectively, $\sigma$-normal), then so
is $\Phi$.
\end{enumerate}
\end{lemma}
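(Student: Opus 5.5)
The plan is to treat the three assertions in order, using only the definition of $e$ and the projection argument from the proof of Lemma~\ref{lem:semifinite}.

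\emph{Support and semifiniteness.} For $x\in\pphi$, each spectral projection $\Chi_{[1/n,\infty)}(x)$ satisfies $\Chi_{[1/n,\infty)}(x)\le nx$. It therefore has finite weight and lies below $e$. Their join is $s(x)$, so $s(x)\le e$ and $x=exe\in eMe$.

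For semifiniteness of $\Phi_e$, I apply Lemma~\ref{lem:semifinite} inside the von Neumann algebra $eMe$, whose unit is $e$. Every finite-weight projection $f$ of $M$ lies below $e$, hence belongs to $eMe$, and $\Phi_e(f)=\Phi(f)<\infty$. So the join in $eMe$ of the $\Phi_e$-finite projections is $e$, and criterion (ii) of that lemma gives ordinary semifiniteness.

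\emph{Part (i).} Let $B\subseteq eMe$ be a unital abelian von Neumann subalgebra, with unit $e$. Then $B+\mathbb C(1-e)$ is a unital abelian von Neumann subalgebra of $M$. On $B_+$ the restriction of $\Phi$ is exactly $\Phi_e|_{B_+}$. Bounded increasing nets (or sequences) in $B_+$ are also such nets in the larger algebra, with the same supremum, since $B$ is a corner-compatible subalgebra. Hence normality (or $\sigma$-normality) on $B+\mathbb C(1-e)$ restricts to $B$.

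\emph{Part (ii).} Take $0\le x_\alpha\uparrow x$ in $M_+$. It suffices to prove $\Phi(x)\le\sup_\alpha\Phi(x_\alpha)$, and we may assume this supremum $L$ is finite. Then every $x_\alpha$ lies in $\pphi$, so $x_\alpha=ex_\alpha e$. Passing to the strong limit gives $x=exe$, and therefore
\[
\Phi(x)=\Phi_e(x)=\sup_\alpha\Phi_e(x_\alpha)=L,
\]
using normality of $\Phi_e$ on $eMe$. The supremum of $(x_\alpha)$ in $eMe$ equals $x$, because the strong limit is the same in both algebras. The sequential case is identical.

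\emph{Expected obstacle.} The only subtle point is the case where some $\Phi(x_\alpha)=\infty$. There the inequality $\Phi(x)\le\sup_\alpha\Phi(x_\alpha)$ is trivial, and $\Phi(x)\ge\Phi(x_\alpha)$ holds by monotonicity. So I expect no real obstacle. The main care needed is in (i), checking that restriction to corners preserves the abelian hypothesis, which is the adjunction remark already made in the Appendix.
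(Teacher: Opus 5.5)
Your proposal is correct and follows the paper's proof: the same spectral-projection support argument, Lemma~\ref{lem:semifinite} applied inside $eMe$, the adjunction $B+\mathbb C(1-e)$ for (i), and for (ii) the same ingredients (the support assertion, closedness of $eMe$, and normality of $\Phi_e$). The only difference is organizational: you split on whether $\sup_\alpha\Phi(x_\alpha)$ is finite, whereas the paper splits on whether $x\in eMe$, which is the contrapositive arrangement of the same argument.
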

\begin{proof}
The support assertion was proved in the first paragraph of
Lemma~\ref{lem:semifinite}. The finite-weight projections in $eMe$
have join $e$, so the same lemma, applied with identity $e$, proves
ordinary semifiniteness. For (i), an abelian von Neumann subalgebra
$B\subseteq eMe$ with identity $e$ gives the unital abelian subalgebra
$B+\mathbb C(1-e)$ of $M$.

For (ii), let $0\le x_\alpha\uparrow x$. If $x\in eMe$, then
$0\le x_\alpha\le x$ implies $x_\alpha\in eMe$ for every $\alpha$,
and normality of $\Phi_e$ applies. If $x\notin eMe$, some
$x_\alpha\notin eMe$, since $eMe$ is weak* closed and
$x_\alpha\to x$ weak*. The support assertion gives
$\Phi(x_\alpha)=\infty$, so
$\Phi(x)=\sup_\alpha\Phi(x_\alpha)=\infty$.
The same argument with a sequence proves the sequential assertion. \end{proof}

\section{Closedness of the GNS map}\label{sec:closedness}

Theorem~6.16 above 
makes
$\Lam$ closable for the operator norm on its domain and the
Hilbert-space norm on $\HH$. Lemma~6.15 there states that if
$c_k\in\nphi$, the projections $q_k$ are mutually orthogonal,
$c_k=c_kq_k$, and
\begin{equation}\label{eq:gns-hsigma-summability}
 \sum_{k\ge1}2^k\bigl(\|c_k\|^2+\|\Lam(c_k)\|^2\bigr)<\infty,
\end{equation}
then $c=\sum_kc_k$ converges in operator norm, belongs to $\nphi$,
and $\Lam(c)=\sum_k\Lam(c_k)$ in $\HH$. 

\Needspace{10\baselineskip}
\begin{theorem}[Closedness from $(H_\sigma)$]
\label{thm:gns-hsigma-closed}
Let $\Phi$ be a weight on an arbitrary von Neumann algebra $M$.
If $(H_\sigma)$ holds, then the GNS map
\[
       \Lam:\nphi\subseteq M\longrightarrow\HH
\]
has a closed graph for the operator norm on $M$ and the Hilbert-space
norm on $\HH$.  
\end{theorem}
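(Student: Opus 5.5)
The plan is to fix $a_n\in\nphi$ with $a_n\to a$ in operator norm and $\Lam(a_n)\to\eta$ in $\HH$, and to prove that $a\in\nphi$ with $\Lam(a)=\eta$. By the closability theorem (Theorem~6.16) the closure $\overline{\Lam}$ is a well-defined operator, and $a$ lies in its domain with $\overline{\Lam}(a)=\eta$. So it suffices to show that the domain of $\overline{\Lam}$ is contained in $\nphi$; the identity $\Lam(a)=\eta$ then follows from closability once $a\in\nphi$ is known. Passing to a subsequence, I will assume $\|a_n-a\|\le 2^{-2n}$ and $\|\Lam(a_n)-\eta\|\le 2^{-2n}$.

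\emph{Step 1: chop $a$ along its own spectrum.} Put $x=a^*a$. Let $e_0=\Chi_{[1,\infty)}(x)$ and $e_j=\Chi_{[2^{-j},2^{-j+1})}(x)$ for $j\ge1$. These are mutually orthogonal, commute with $x$, and sum strongly to $s(x)$. Then $a=\sum_j ae_j$, converging in operator norm, since $\|a(1-\sum_{j\le J}e_j)\|^2\le 2^{-J}$. Each piece satisfies $ae_j=(ae_j)e_j$, so the right supports are dominated by orthogonal projections, which is exactly the shape required in Lemma~6.15.

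\emph{Step 2: each piece lies in the closure domain and has finite weight.} This is the step I expect to be the main obstacle. The task is to show $ae_j\in\nphi$, with $\Lam(ae_j)=\lim_n\Lam(a_ne_j)$. Right multiplication by $e_j$ is not a priori continuous for $\Lam$. I would instead imitate the gliding-hump construction of Theorem~6.16 and work with the finite-weight spectral projections $q=\Chi_{[\delta,\infty)}(a_n^*a_n)$. These satisfy $\Phi(q)\le\delta^{-1}\Phi(a_n^*a_n)<\infty$, and (H$_\sigma$) on $W^*(1,a_n^*a_n)$ gives $\|\Lam(a_n-a_nq)\|\to0$ as $\delta\downarrow0$.

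Because $a_n^*a_n\to x$ in norm, for $\delta$ chosen in a spectral gap region these projections approximate the spectral projections of $x$ in norm. That should yield finite-weight projections $f_j$ with $\|e_j-f_j\|$ small, and hence $\Phi(e_j)<\infty$ after a unitary conjugation close to $1$; here I would use $\Phi(u^*fu)\le 2\Phi(f)+2\|u-1\|^2\Phi(\cdots)$-type estimates inside $\nphi$. Since $e_j\le 2^jx$ is bounded below on its range, once $\Phi(e_j)<\infty$ we get $a_ne_j\in\nphi$. The convergence $\Phi(e_j(a_n-a_m)^*(a_n-a_m)e_j)\le\|a_n-a_m\|^2\Phi(e_j)\to0$ then shows that $\Lam(a_ne_j)$ is Cauchy, with limit $\Lam(ae_j)$. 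If the spectral-gap approximation fails for general $x$, I would first perturb $a$ by replacing $x$ with a finite-spectrum function of $x$; this is legitimate because $\|a-ag(x)\|$ can be made small with $g(x)\in W^*(1,x)$.

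\emph{Step 3: assemble the pieces.} With $c_j=ae_j\in\nphi$, I need the summability condition \eqref{eq:gns-hsigma-summability}. The operator norms already decay like $2^{-j}$; to obtain the weight $2^j$ I would group consecutive $e_j$ into blocks $E_k$ chosen so that $\|aE_k\|^2\le 2^{-3k}$. The Hilbert norms are handled by Pythagoras: the vectors $\Lam(a_nE_k)$ for different $k$ are orthogonal up to errors controlled by (H$_\sigma$) on the abelian algebra generated by the $E_k$ and $x$. Summing over $k$ then gives $\sum_k\|\Lam(aE_k)\|^2\le\|\eta\|^2<\infty$, and I would pass to a tail, using subsequence blocks, to insert the factor $2^k$.

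Lemma~6.15 then gives $a=\sum_kaE_k\in\nphi$ (the kernel part $a(1-s(x))$ is zero), and closability identifies $\Lam(a)=\eta$.
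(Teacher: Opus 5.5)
Your reduction to $\Dom\overline{\Lam}\subseteq\nphi$ is fine, and so is your last step (Lemma~6.15, then closability to get $\Lam(a)=\eta$). Steps~2 and~3, however, do not go through.

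In Step~2, the claim $\Phi(e_j)<\infty$ for the spectral projections of $x=a^*a$ is true a posteriori, but your route to it fails.
\begin{itemize}
\item Norm approximation of spectral projections of $x$ by those of $a_n^*a_n$ needs a spectral gap, and $\sigma(x)$ may be an interval.
\item More seriously, knowing that $\|e_j-f\|$ is small with $\Phi(f)<\infty$ yields nothing. Write $e_j=u^*fu$ with $u$ near $1$. The error term $\Phi((u-1)^*f(u-1))$ is not controlled by $\|u-1\|$. It is already infinite for the normal weight $y\mapsto\operatorname{Tr}(h^{1/2}yh^{1/2})$ on $B(\ell^2)$ with $h$ unbounded: take $f$ of rank one onto a vector of $\Dom h^{1/2}$, and $u$ a small rotation moving that vector out of $\Dom h^{1/2}$. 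Controlling such terms is a norm-continuity property of $\Phi$, which is what is to be proved.
\item Your fallback $a\mapsto ag(x)$ has the same defect. $\nphi$ is only a left ideal, so $a_ng(x)$ need not lie in $\nphi$, and $ag(x)$ is not known to lie in $\Dom\overline{\Lam}$.
\end{itemize}

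Step~3 is circular. Each $E_k$ lies in $W^*(1,x)$, so $(H_\sigma)$ there gives $\Phi(a^*a)=\sum_k\|\Lam(aE_k)\|^2$. Your Bessel bound is therefore literally $\Phi(a^*a)\le\|\eta\|^2$, which is the theorem itself; Lemma~6.15 would not even be needed. The asserted near-orthogonality is not supplied by $(H_\sigma)$. The element $a_n^*a_n$ neither commutes with the $E_k$ nor lies in the abelian algebra they generate with $x$, so the cross terms $\Phi(E_la_n^*a_nE_k)$ are uncontrolled.

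The missing idea is to cut with spectral projections of the graph approximants, not of $a^*a$. Write $T=\overline{\Lam}$.
\begin{itemize}
\item Take $b_j\in\nphi$ with $b_j\to a$ and $\Lam(b_j)\to Ta$, and put $\varepsilon_j=\|b_j-a\|$ and $\delta_j=\varepsilon_j+j^{-1}$. Then $e_j=\Chi_{[\delta_j,\infty)}(b_j^*b_j)$ satisfies $\Phi(e_j)\le C/\delta_j$ for free.
\item Because $e_j$ commutes with $b_j^*b_j$, the tail $b_j(1-e_j)$ has norm at most $\delta_j^{1/2}\to0$ and bounded $\Lam$-norm. A Mazur-plus-closability argument then gives $\Lam(b_je_j)\to Ta$ weakly.
\item The commuting bound $\Phi(e_jb_j^*b_je_j)\le\Phi(b_j^*b_j)$ upgrades this to norm convergence. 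Together with $\|\Lam((a-b_j)e_j)\|^2\le C\varepsilon_j^2/\delta_j\to0$, this gives $\Lam(ae_j)\to Ta$.
\item Iterate, with the cut-off projections taken below $1-P_{k-1}$. This produces orthogonal finite-weight projections $q_k$ and residuals $r_k=a(1-P_k)\in\Dom T$ with $\|r_k\|+\|Tr_k\|\le4^{-k}$.
\item The pieces $c_k=aq_k=r_{k-1}-r_k$ lie in $\nphi$ automatically, since $q_ka^*aq_k\le\|a\|^2q_k$. Their Hilbert norms are controlled by telescoping rather than by any Bessel inequality, and Lemma~6.15 finishes.
\end{itemize}
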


Let $T=\overline{\Lam}$, which is a well-defined operator by Theorem~6.16
Our task is to show
$\Dom T\subseteq\nphi$, since the opposite inclusion and agreement
of the two operators are automatic.

\begin{lemma} 
\label{lem:gns-hsigma-weak}
Suppose $d_j\in\nphi$ and $\|d_j\|\to0$, with
\[\sup_j\|\Lam(d_j)\|<\infty.\]
Then $\Lam(d_j)\to0$ weakly in $\HH$.
\end{lemma}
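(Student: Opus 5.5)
The plan is to reduce the statement to the closability of $\Lam$ supplied by Theorem~\ref{thm:Hsigma-GNS-closable}, using a subsequence-plus-Mazur argument like the one in the proof of Lemma~\ref{lem:closed-GNS}(ii),(iii). Since $(\Lam(d_j))$ is bounded in $\HH$, it suffices to show that every weak cluster point along a subsequence is $0$. Indeed, a bounded sequence in a Hilbert space converges weakly to $0$ as soon as every subsequence has a further subsequence converging weakly to $0$. If it did not, some vector $\zeta$ and some $\eps>0$ would give a subsequence with $|\langle\Lam(d_j),\zeta\rangle|\ge\eps$, and that subsequence could have no weakly null sub-subsequence.

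So fix a subsequence $(d_{j_n})$. Weak sequential compactness of bounded sets in $\HH$ lets us pass to a further subsequence, still denoted $(d_{j_n})$, with $\Lam(d_{j_n})\to\eta$ weakly for some $\eta\in\HH$. By Mazur's lemma, for each $k$ we then choose a finite convex combination
\[
 b_k=\sum_{n\ge k}\alpha_{k,n}d_{j_n},\qquad \alpha_{k,n}\ge0,\quad \sum_{n\ge k}\alpha_{k,n}=1,
\]
drawn from the tail $\{d_{j_n}:n\ge k\}$, such that $\|\Lam(b_k)-\eta\|\to0$. Here we use that $\nphi$ is a linear subspace and $\Lam$ is linear, so $b_k\in\nphi$ and $\Lam(b_k)=\sum_n\alpha_{k,n}\Lam(d_{j_n})$. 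Because each $b_k$ is a convex combination of tail terms, $\|b_k\|\le\sup_{n\ge k}\|d_{j_n}\|\to0$ in operator norm.

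We are then exactly in the situation of Theorem~\ref{thm:Hsigma-GNS-closable}: $b_k\in\nphi$, $b_k\to0$ in operator norm, and $\Lam(b_k)\to\eta$ in Hilbert-space norm. Under $(H_\sigma)$ that theorem forces $\eta=0$. Hence every subsequence of $(\Lam(d_j))$ has a further subsequence converging weakly to $0$, and so the whole sequence converges weakly to $0$.

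I do not expect a genuine obstacle. All the analytic content sits in the closability theorem, which is where $(H_\sigma)$ enters, through Lemma~\ref{lem:orthogonal-GNS-series} and the gliding-hump construction. The only points to check are that Mazur's lemma is applied to tails, so that operator-norm smallness of $(d_j)$ passes to the $b_k$, and that the subsequence principle correctly upgrades the conclusion from cluster points to the whole sequence.
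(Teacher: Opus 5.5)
Your proposal is correct and follows essentially the same route as the paper: extract a weakly convergent sub-subsequence, use Mazur's lemma on tails to obtain norm convergence of $\Lam(b_k)$ while $\|b_k\|\to0$, invoke closability from Theorem~\ref{thm:Hsigma-GNS-closable} to force the limit to vanish, and conclude with the subsequence principle. Your added checks, that tail convex combinations preserve operator-norm smallness and that the subsequence principle upgrades the conclusion to the whole sequence, are exactly the points the paper's brief proof leaves implicit.
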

\begin{proof}
Every subsequence of the bounded sequence $(\Lam(d_j))$ has a weakly
convergent further subsequence. Let $\zeta$ be the resulting limit.
Mazur's lemma supplies convex combinations of tails whose images
converge in Hilbert-space norm to $\zeta$.  Their preimages still
converge to $0$ in operator norm.  Closability implies $\zeta=0$.
Thus every subsequence has a further subsequence converging weakly
to $0$, which proves the assertion. 
\end{proof}

\begin{lemma} 
\label{lem:gns-hsigma-cutoff}
For each $a\in\Dom T$ there are projections
$e_j\in M$ with $\Phi(e_j)<\infty$ such that
\begin{equation}\label{eq:gns-hsigma-graph-cutoff}
    \|a-ae_j\|\longrightarrow0,
    \qquad
    \|Ta-\Lam(ae_j)\|\longrightarrow0.
\end{equation}
Here $ae_j\in\nphi$.  Moreover, if $P\in\Proj(M)$ has finite weight
and $a=a(1-P)$, the projections $e_j$ may be chosen below $1-P$.
\end{lemma}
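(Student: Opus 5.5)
The plan is to start from a sequence witnessing $a\in\Dom T$ and to cut off each term with a finite-weight spectral projection of its own modulus. The cutoff level will be tied to the distance $\|a-a_n\|$, so that no uniformity across $n$ is needed. Lemma~\ref{lem:gns-hsigma-weak} then converts weak convergence into norm convergence.

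\emph{Setup.} Choose $a_n\in\nphi$ with $\|a_n-a\|\to0$ and $\Lam(a_n)\to Ta$. In the refined version ($P$ finite weight, $a=a(1-P)$), first replace $a_n$ by $a_n(1-P)$. This is legitimate because $a_nP=(a_n-a)P\in\nphi$ and
\[
\|\Lam(a_nP)\|\le\|a_n-a\|\,\Phi(P)^{1/2}\to0 .
\]
So we may assume $a_n=a_n(1-P)$.

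\emph{Cutoff.} Put $h_n=a_n^*a_n$. Then $C:=\sup_n\Phi(h_n)<\infty$, since $\Lam(a_n)$ converges. Set
\[
\delta_n=\max(\|a-a_n\|,1/n)\to0,\qquad q_n=\Chi_{[\delta_n,\infty)}(h_n),\qquad e_n=q_n .
\]
Then $\delta_nq_n\le h_n$, so $\Phi(q_n)\le C/\delta_n<\infty$. Also $q_n\le s(h_n)\le1-P$ in the refined case. Functional calculus gives $\|a_n(1-q_n)\|\le\delta_n^{1/2}$. Finally $a e_n\in\nphi$, because $e_n$ has finite weight and $\nphi$ is a left ideal.

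\emph{Key step: $\Lam(a_n(1-q_n))\to0$ in norm.} I expect this to be the only real obstacle, because the tail of $h_n$ above the cutoff must be controlled uniformly in $n$. The argument has three parts.
\begin{itemize}
\item The elements $d_n=a_n(1-q_n)$ lie in $\nphi$ and satisfy $\|d_n\|\le\delta_n^{1/2}\to0$.
\item Since $q_n$ commutes with $h_n$ and $\Phi(h_n)<\infty$, we have $\Phi(d_n^*d_n)=\Phi(h_n(1-q_n))\le C$. Hence Lemma~\ref{lem:gns-hsigma-weak}, which rests on the closability in Theorem~6.16, gives $\Lam(d_n)\to0$ weakly.
\item The same commutation, together with polarization on $\mphi$, gives
\[
\|\Lam(d_n)\|^2=\Phi((1-q_n)h_n(1-q_n))=\Phi(h_n(1-q_n))=\langle\Lam(d_n),\Lam(a_n)\rangle .
\]
Because $\Lam(a_n)\to Ta$ in norm while $\Lam(d_n)\to0$ weakly, the right side tends to $0$.
\end{itemize}

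\emph{Conclusion.} Write $\Lam(ae_n)=\Lam(a_n)-\Lam(d_n)+\Lam((a-a_n)e_n)$. The last term satisfies
\[
\|\Lam((a-a_n)e_n)\|\le\|a-a_n\|\,\Phi(e_n)^{1/2}\le\|a-a_n\|\,(C/\delta_n)^{1/2}\le C^{1/2}\|a-a_n\|^{1/2}\to0,
\]
and this is exactly why $\delta_n$ was tied to $\|a-a_n\|$. Hence $\Lam(ae_n)\to Ta$. On the operator side,
\[
\|a-ae_n\|\le\|(a-a_n)(1-e_n)\|+\|d_n\|\le\|a-a_n\|+\delta_n^{1/2}\to0 .
\]
These two estimates give \eqref{eq:gns-hsigma-graph-cutoff}, with $e_n\le1-P$ when required. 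Note that only closability and the weak lemma are used at this stage; $(H_\sigma)$ enters only through Theorem~6.16.
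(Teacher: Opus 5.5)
Your proof is correct and follows the paper's argument essentially step for step: the same spectral cutoff $\Chi_{[\delta_n,\infty)}(a_n^*a_n)$ with $\delta_n$ tied to $\|a-a_n\|$, the same appeal to Lemma~\ref{lem:gns-hsigma-weak}, the same $C^{1/2}\|a-a_n\|^{1/2}$ estimate, and the same replacement $a_n\mapsto a_n(1-P)$ in the localized case. The only cosmetic difference is the step from weak to norm convergence. You use the identity $\|\Lam(d_n)\|^2=\langle\Lam(d_n),\Lam(a_n)\rangle$, whereas the paper combines weak convergence of $\Lam(b_je_j)$ with $\limsup_j\|\Lam(b_je_j)\|\le\|Ta\|$; both rest on the cutoff commuting with $a_n^*a_n$.
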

\begin{proof}
Choose $b_j\in\nphi$ with $b_j\to a$ in operator norm and
$\Lam(b_j)\to Ta$ in $\HH$. Write
\[
  \varepsilon_j=\|b_j-a\|,\quad
  \delta_j=\varepsilon_j+j^{-1},\quad
  C=\sup_j\Phi(b_j^*b_j)<\infty.
\]
Define the spectral projections
\[
       e_j=\Chi_{[\delta_j,\infty)}(b_j^*b_j).
\]
Since $\delta_j e_j\le b_j^*b_j$,
\begin{equation}\label{eq:gns-hsigma-weight-e}
       \Phi(e_j)\le C/\delta_j<\infty.
\end{equation}
Functional calculus also gives
$\|b_j(1-e_j)\|\le\delta_j^{1/2}$ and, because
$e_j$ commutes with $b_j^*b_j$,
\[
  \|\Lam(b_j(1-e_j))\|^2
    =\Phi((1-e_j)b_j^*b_j(1-e_j))
    \le \Phi(b_j^*b_j)\le C.
\]
The elements $d_j=b_j(1-e_j)$ therefore meet the assumptions of
Lemma~\ref{lem:gns-hsigma-weak}. Hence
\[
    \Lam(b_je_j)=\Lam(b_j)-\Lam(d_j)
                      \to Ta\quad\text{weakly}.
\]
On the other hand by a similar argument to the above, 
\[
 \limsup_j\|\Lam(b_je_j)\|^2
 \le\lim_j\|\Lam(b_j)\|^2=\|Ta\|^2.
\]
Weak convergence together with this norm inequality implies
$\Lam(b_je_j)\to Ta$ in Hilbert-space norm.

The finite weight of $e_j$ ensures $ae_j\in\nphi$, since
$e_ja^*ae_j\le\|a\|^2e_j$.  Moreover,
\begin{align*}
 \|\Lam((a-b_j)e_j)\|^2
  &=\Phi(e_j(a-b_j)^*(a-b_j)e_j)\\
  &\le\varepsilon_j^2\Phi(e_j)
   \le C\frac{\varepsilon_j^2}{\delta_j}
   \le C\varepsilon_j\longrightarrow0.
\end{align*}
Consequently $\Lam(ae_j)\to Ta$. Finally,
\[
 \|a(1-e_j)\|
 \le\|a-b_j\|+\|b_j(1-e_j)\|
 \le\varepsilon_j+\delta_j^{1/2}\longrightarrow0,
\]
which proves \eqref{eq:gns-hsigma-graph-cutoff}.

For the last assertion, suppose $a=a(1-P)$ and $\Phi(P)<\infty$.
Replace the graph approximants $b_j$ by $b_j(1-P)$.
Indeed, $b_jP\in\nphi$ and
\[
  \|\Lam(b_jP)\|^2
   =\Phi(Pb_j^*b_jP)
   \le\|b_jP\|^2\Phi(P)\longrightarrow0,
\]
because $b_jP\to aP=0$ in norm. Thus $b_j(1-P)$ still
converges to $a$ in the graph norm of $T$.  For these new
approximants, $b_j^*b_j$ (with $b_j=b_j(1-P)$) is supported in
$1-P$, so every spectral projection $e_j$ above a strictly
positive threshold lies below $1-P$.
\end{proof}

\begin{proof}[Proof of Theorem~\ref{thm:gns-hsigma-closed}]
Fix $a\in\Dom T$. We recursively construct mutually orthogonal
finite-weight projections $q_k$ $(k\ge1)$ and residuals
\[
 P_k=\sum_{j=1}^kq_j,\qquad r_k=a(1-P_k), \quad k\ge1,
\]
with the initial conventions
\[
                  P_0=0,\qquad r_0=a,
\]
such that
\begin{equation}\label{eq:gns-hsigma-decay}
    r_k\in\Dom T,\qquad
    \|r_k\|+\|Tr_k\|\le 4^{-k}, \qquad k\ge1.
\end{equation}

For $k=1$, apply Lemma~\ref{lem:gns-hsigma-cutoff} to $r_0=a$
with $P=P_0=0$. Choose a projection $q_1$ of finite $\Phi$-weight
such that
\[
 \|a-aq_1\|+\|Ta-\Lam(aq_1)\|\le\tfrac14.
\]
Set $P_1=q_1$, $r_1=a-aq_1$, and $c_1=aq_1$.
Since $aq_1\in\nphi\subseteq\Dom T$, we have
\[
 Tr_1=Ta-\Lam(aq_1),\qquad
 \|r_1\|+\|Tr_1\|\le4^{-1}.
\]

Now suppose $k\ge2$ and $P_{k-1}$ has already been constructed.
It has finite weight, and $r_{k-1}=r_{k-1}(1-P_{k-1})$.
Apply the localized assertion of Lemma~\ref{lem:gns-hsigma-cutoff} to
$r_{k-1}$ with $P=P_{k-1}$. Choose a finite-weight projection
$q_k\le1-P_{k-1}$ sufficiently far along the sequence that
\[
   \|r_{k-1}-r_{k-1}q_k\|
   +\|Tr_{k-1}-\Lam(r_{k-1}q_k)\|\le4^{-k}.
\]
Set $r_k=r_{k-1}-r_{k-1}q_k$ and $P_k=P_{k-1}+q_k$.
Because $q_k\le1-P_{k-1}$,
\[
 r_{k-1}q_k=a(1-P_{k-1})q_k=aq_k.
\]
Moreover,
\[
 0\le q_ka^*aq_k\le\|a\|^2q_k,
 \qquad \Phi(q_k)<\infty,
\]
so $aq_k\in\nphi$. Therefore $r_k\in\Dom T$ and
$Tr_k=Tr_{k-1}-\Lam(aq_k)$. The bound
\eqref{eq:gns-hsigma-decay} follows. Orthogonality of the $q_k$
is automatic from $q_k\le1-P_{k-1}$, and
$r_k=a(1-P_k)$ as required.

Put $c_k=aq_k=r_{k-1}-r_k$. Then $c_k\in\nphi$ and
$c_k=c_kq_k$ for every $k\ge1$. For $k\ge2$, using
\eqref{eq:gns-hsigma-decay},
\[
 \|c_k\|+\|\Lam(c_k)\|
 \le4^{-(k-1)}+4^{-k}=5\cdot4^{-k}.
\]
The $k=1$ term is finite because $c_1=aq_1\in\nphi$.
Hence
\[
 \sum_{k=1}^\infty 2^k
 \bigl(\|c_k\|^2+\|\Lam(c_k)\|^2\bigr)<\infty.
\]
Also, $a-\sum_{j=1}^kc_j=r_k\to0$ in operator norm. 
Lemma~6.15
applied to $(c_k,q_k)$, therefore yields
\[
       a=\sum_k c_k\in\nphi,
       \qquad
       \Lam(a)=\sum_k\Lam(c_k).
\]
Finally, the partial sums of the last series telescope:
\[
      \sum_{j=1}^k\Lam(c_j)=Ta-Tr_k\longrightarrow Ta.
\]
Thus $\Lam(a)=Ta$ for every $a\in\Dom T$, proving that
$T=\Lam$ and the graph of $\Lam$ is closed.
\end{proof}

\begin{corollary}[Norm lower semicontinuity everywhere]
\label{cor:gns-hsigma-lsc}
Under $(H_\sigma)$, if $x_j,x\in M_+$ and
$\|x_j-x\|\to0$, then
\[
           \Phi(x)\le\liminf_{j\to\infty}\Phi(x_j).
\]
 In particular,
$\Phi$ is norm lower semicontinuous on $M_+$.
\end{corollary}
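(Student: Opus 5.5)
The plan is to combine Theorem~\ref{thm:gns-hsigma-closed} with part (iii) of Lemma~\ref{lem:closed-GNS}. Under $(H_\sigma)$, Theorem~\ref{thm:gns-hsigma-closed} shows that the GNS map $\Lam:\nphi\subseteq M\to\HH$ has closed graph from the operator norm to the Hilbert-space norm. Lemma~\ref{lem:closed-GNS}(iii) then says directly that a weight with closed GNS map is norm lower semicontinuous on all of $M_+$. So the corollary should follow in one line, and the remaining work is to check that the proof of Lemma~\ref{lem:closed-GNS}(iii) uses only closedness and no $\sigma$-normality.

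For that check I would retrace the argument. Let $x_j\to x$ in norm in $M_+$ and put $L=\liminf_j\Phi(x_j)$. There is nothing to prove if $L=\infty$, so pass to a subsequence with $\Phi(x_j)\to L<\infty$.

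1. Set $a_j=x_j^{1/2}$ and $a=x^{1/2}$. Norm continuity of the square root on $M_+$ gives $a_j\to a$ in norm, and $a_j\in\nphi$ with $\|\Lam(a_j)\|^2=\Phi(x_j)$ bounded.
2. Extract a subsequence with $\Lam(a_j)\to\eta$ weakly in $\HH$.
3. By Mazur's lemma, take convex combinations $b_k$ of tails of $(a_j)$. Then $\Lam(b_k)\to\eta$ in norm, while still $b_k\to a$ in operator norm.
4. Since $\Lam$ is closed, conclude $a\in\nphi$ and $\Lam(a)=\eta$.
5. Weak lower semicontinuity of the Hilbert norm then gives $\Phi(x)=\|\eta\|^2\le\liminf\|\Lam(a_j)\|^2=L$.

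None of these steps uses $\sigma$-normality of $\Phi$, only the closedness of $\Lam$ and standard Hilbert space facts.

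The only point needing care is step 4: for $x$ of infinite weight we must not assume $a\in\nphi$ in advance. Closedness, as opposed to the mere closability from Theorem~\ref{thm:Hsigma-GNS-closable}, is exactly what provides $a\in\nphi$. This is why the earlier Corollary~\ref{cor:Hsigma-finite-lsc} was limited to finite-weight points and the present statement is not. No further obstacle is expected, and the last assertion of the corollary is just a restatement of the displayed inequality.
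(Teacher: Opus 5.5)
Your proposal is correct and is exactly the paper's argument: the corollary is obtained in one line from Theorem~\ref{thm:gns-hsigma-closed} (closedness of the GNS map under $(H_\sigma)$) together with Lemma~\ref{lem:closed-GNS}(iii). Your retracing confirms that the proof of that lemma uses only closedness of $\Lam$, which is what yields $a\in\nphi$ at points of infinite weight.
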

\begin{proof}
This follows from Theorem~\ref{thm:gns-hsigma-closed} and
Lemma~6.8(iii).
\end{proof}

\noindent
This strengthens  Theorem~6.16 and Corollary~6.17
 from
closability to closedness, and from lower semicontinuity at finite-weight
points to lower semicontinuity everywhere.

\section{Projections in the finite-domain closure}\label{sec:projections}

We record the one-sided noncommutative Egoroff theorem in precisely
the form used below. The explicit proof of its faithful case also
fixes the meaning of the unbounded positive operator appearing in
the approximation argument.

\begin{lemma}[One-sided noncommutative Egoroff theorem]
\label{lem:egoroff}
Let $N$ be a von Neumann algebra, let $z_j\to0$ in the
$\sigma$-strong topology, let $s\in\Proj(N)$, and let
$\omega\in N_*^+$. For every $\varepsilon>0$ there are a subsequence
$(z_{j_k})$ and a projection $q\le s$ such that
\[
 \omega(s-q)<\varepsilon,
 \qquad \|z_{j_k}q\|\longrightarrow0.
\]
\end{lemma}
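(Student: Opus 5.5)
The plan is to reduce to a single normal state on a corner and then build $q$ by a Borel--Cantelli type spectral cutoff. Replace $\omega$ by $\omega_s(x)=\omega(sxs)$; since we only need $q\le s$ and $\omega(s-q)=\omega_s(s-q)$, we may work with $\omega_s$. This is a positive normal functional on $N$. Since $z_j\to0$ $\sigma$-strongly, we have $\omega_s(z_j^*z_j)=\omega(sz_j^*z_js)\to0$.

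First choose a subsequence $j_k$ with
\[
 \omega(s z_{j_k}^*z_{j_k}s)\le 8^{-k}\varepsilon .
\]
Put
\[
 h=\sum_k 2^{k} s z_{j_k}^*z_{j_k}s .
\]
This series is a bounded increasing sequence of positive elements only if the $z_j$ are uniformly bounded. The uniform boundedness principle gives this when $\sigma$-strong convergence is of a sequence: $\sigma$-strong convergence implies $\sigma$-weak convergence, hence weak* convergence, and weak* convergent sequences are bounded. So $\sup_j\|z_j\|<\infty$. Even so, the partial sums of $h$ are unbounded in norm in general, so $h$ should be read as the increasing limit in the extended positive part, that is, as a positive self-adjoint operator affiliated with $sNs$.

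To avoid unbounded operators, the main step is the following. For each $k$, let $f_k=s-\Chi_{[2^{-k},\infty)}(s z_{j_k}^*z_{j_k}s)$, the spectral projection of $y_k=sz_{j_k}^*z_{j_k}s$ for $[0,2^{-k})$, computed in $sNs$. Then $f_k\le s$, and
\[
 \|z_{j_k}f_k\|^2=\|f_ky_kf_k\|\le2^{-k}.
\]
By Chebyshev,
\[
 \omega(s-f_k)\le 2^k\omega(y_k)\le 4^{-k}\varepsilon .
\]
Define $q_m=\bigwedge_{k\ge m}f_k$, taking the meet in $N$. This is a projection below $s$, and $s-q_m=\bigvee_{k\ge m}(s-f_k)$.

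The key inequality is then countable subadditivity of a normal positive functional on joins of projections:
\[
 \omega\Bigl(\bigvee_k e_k\Bigr)\le\sum_k\omega(e_k).
\]
This holds for normal $\omega$: for two projections, $e\vee f-e$ is equivalent to $f-e\wedge f$, but equivalence does not preserve $\omega$, so a different route is needed. One option is to use $e\vee f=s(e+f)$, the range projection, with $e+f\le 2\cdot s(e+f)$. This does not directly give $\omega(s(e+f))\le\omega(e)+\omega(f)$.

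This is the expected obstacle: noncommutative joins are not subadditive for general normal states. The fix is to use Kaplansky's formula in the $\|\cdot\|_\omega$ sense instead, or to avoid joins altogether and pass to the vector picture. In the GNS representation of $\omega$ with cyclic vector $\xi$, we have
\[
 \|(s-f_k)\xi\|^2\le4^{-k}\varepsilon .
\]
Define instead $q$ via the positive operator $g=\sum_k 2^{k}(s-f_k)$, an increasing limit that converges as an affiliated positive operator. Take $q=\Chi_{[0,1/2]}$-type spectral projections, that is, $q=\Chi_{[0,\lambda]}(\tilde g)$ for a bounded transform such as $\tilde g=g(1+g)^{-1}\in sNs$, with $\lambda$ chosen so that Chebyshev gives $\omega(s-q)\le C\omega(g)<\varepsilon$ once $\varepsilon$ is rescaled.

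On the range of $q$, the operator $g$ is bounded by some $L$. Hence $\|(s-f_k)q\|^2=\|q(s-f_k)q\|\le L2^{-k}$. Then
\[
 \|z_{j_k}q\|\le\|z_{j_k}f_kq\|+\|z_{j_k}\|\,\|(s-f_k)q\|\le 2^{-k/2}+K(L2^{-k})^{1/2}\to0 .
\]
This replaces the problematic meet by a single spectral projection of an affiliated operator, which is exactly the unbounded positive operator the paper mentions. One must check that $\omega(g)=\sum_k2^k\omega(s-f_k)\le\sum_k2^{-k}\varepsilon$, using normality on the increasing partial sums, and that $q\le s$.
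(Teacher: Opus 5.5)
Your final argument is correct, but it takes a genuinely different route from the paper, which gives no argument of its own. The paper simply invokes the noncommutative Egoroff theorem (Takesaki, Theorem II.4.13, in the form recalled by Chetcuti--Hamhalter--Weber). You instead prove the statement directly. You replace the classical Borel--Cantelli meet $\bigwedge_{k\ge m}f_k$ by a single spectral cutoff of the monotone limit $g=\sum_k 2^k(s-f_k)$; the meet fails here, as you rightly observe, because a normal state is not subadditive on joins of noncommuting projections. The citation is shorter. Your version is self-contained and shows exactly where the noncommutativity is circumvented.

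To turn the proposal into a proof, make three changes.

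First, delete the exploratory passages. This includes the sentence asserting that subadditivity on joins ``holds for normal $\omega$'', which you later retract.

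Second, note that the $f_k$ and the uniform bound $K$ are unnecessary. If you cut off $h=\sum_k 2^k y_k$ directly, with $\omega(h)\le\sum_k 2^k 8^{-k}\varepsilon$, you get $\|z_{j_k}q\|^2=\|q y_k q\|\le 2^{-k}L$ at once.

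Third, justify the unbounded limit. Since $\omega$ need not be faithful, $g$ (or $h$) need not be densely defined, so you are really working in the extended positive part. The two facts you use, $g_n\le g$ as forms and $\omega(g)=\sup_n\omega(g_n)$, must therefore be supplied. This can be done with bounded elements only:
\begin{itemize}
\item Let $g_n$ be the partial sums and $r_n=(s+g_n)^{-1}$, computed in $sNs$. The $r_n$ decrease; put $r=\inf_n r_n$.
\item Fix $0<\lambda<1$ and let $q=\mathbf{1}_{[1-\lambda,1]}(r)$, computed in $sNs$.
\item From $r_n\ge r$ you get $r^{1/2}(s+g_n)r^{1/2}\le s$.
\item Since $q$ commutes with $r$ and $rq\ge(1-\lambda)q$, compress by the inverse of $r^{1/2}$ on the range of $q$. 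This gives $q g_n q\le\frac{\lambda}{1-\lambda}\,q$ for every $n$.
\item Meanwhile $s-q\le\lambda^{-1}(s-r)$ and $s-r_n=g_n(s+g_n)^{-1}\le g_n$. Normality then yields $\omega(s-q)\le\lambda^{-1}\sup_n\omega(g_n)$.
\end{itemize}
With this in place, your estimates for $\|z_{j_k}q\|$ and $\omega(s-q)$ go through verbatim, after rescaling $\varepsilon$.
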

\begin{proof}
This the form of the noncommutative Egoroff theorem
recalled in \cite[proof of Theorem~4.3(ii)]{CHW}, with reference to
\cite[Theorem~4.13, p.~85]{Takesaki}. \end{proof}

\begin{lemma} 
\label{lem:projection-rich}
Let $\Phi$ be ordinarily semifinite on $M$. Every nonzero projection
$p\in M$ dominates a nonzero projection $q\in A_\Phi$.
Consequently every projection $p\in M$ has an orthogonal decomposition
\begin{equation}\label{eq:projection-partition}
 p=\sum_{i\in I}q_i,\qquad q_i\in A_\Phi\cap\Proj(M).
\end{equation}
If $M$ is $\sigma$-finite, there is an increasing sequence of
projections $p_n\in A_\Phi$ with $p_n\uparrow p$.
\end{lemma}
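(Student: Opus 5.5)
The plan is to combine the increasing approximate unit from Lemma~\ref{lem:semifinite}(iii) with the one-sided Egoroff theorem (Lemma~\ref{lem:egoroff}), run inside a $\sigma$-finite corner under $p$. The key observation is that if $u\in\pphi$ is a positive contraction and $q$ is any projection, then
\[
0\le uqu\le u^2\le u,
\]
so $uqu\in\pphi\subseteq\mphi$. Consequently, if we can find a nonzero projection $q\le p$ and finite-weight contractions $u_n$ with $\|(1-u_n)q\|\to0$, then $u_nqu_n\to q$ in norm, and hence $q\in\Aphi$.

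\emph{Step 1 (reduction to a $\sigma$-finite corner).} Fix $p\ne0$. Choose a normal state $\omega$ with $\omega(p)=1$; for instance take a vector state from a unit vector in the range of $p$. Then its support $s=s(\omega)$ satisfies $0\ne s\le p$, and $\omega$ is faithful on $sMs$.

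Let $(u_\alpha)$ be the net of Lemma~\ref{lem:semifinite}(iii), so $u_\alpha\in\pphi$ are positive contractions with $u_\alpha\uparrow1$. Compressing gives $su_\alpha s\uparrow s$ strongly in $sMs$. Choose increasing indices $\alpha_n$ with $\omega(su_{\alpha_n}s)\to1$, and write $u_n=u_{\alpha_n}$. Arguing exactly as in the last paragraph of the proof of Lemma~\ref{lem:semifinite}, normality and faithfulness of $\omega$ on $sMs$ force $\sup_n su_ns=s$. Hence $z_n:=s(1-u_n)s\downarrow0$, and therefore $z_n\to0$ $\sigma$-strongly in $N=sMs$. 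Here it matters that $(z_n)$ is a bounded monotone sequence, so it converges strongly.

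\emph{Step 2 (Egoroff).} Apply Lemma~\ref{lem:egoroff} in $N=sMs$ with the projection $s$, the functional $\omega|_N$, and $\varepsilon=\tfrac12$. This gives a subsequence, still written $(u_n)$, and a projection $q\le s\le p$ such that
\[
\omega(s-q)<\tfrac12
\qquad\text{and}\qquad
\|s(1-u_n)s\,q\|\to0.
\]
Since $\omega(s)=1$, we get $\omega(q)>\tfrac12$, so $q\ne0$. Because $qs=q$, we have
\[
\|(1-u_n)^{1/2}q\|^2=\|q(1-u_n)q\|\le\|s(1-u_n)sq\|\to0.
\]
As $0\le 1-u_n\le1$, it follows that $\|(1-u_n)q\|\le\|(1-u_n)^{1/2}q\|\to0$. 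By the key observation above, $u_nqu_n\to q$ in norm, so $q\in\Aphi$. This proves the first assertion.

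\emph{Step 3 (decompositions).} Let $(q_i)$ be a maximal orthogonal family of nonzero projections below $p$ lying in $\Aphi$, which exists by Zorn's lemma. If $\sum_iq_i<p$, then applying Step~2 to the nonzero projection $p-\sum_iq_i$ produces another such projection, contradicting maximality. Hence $p=\sum_iq_i$, which is \eqref{eq:projection-partition}.

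If $M$ is $\sigma$-finite, the family $(q_i)$ is countable. Its finite partial sums $p_n$ increase to $p$, and each $p_n$ lies in $\Aphi$ because $\Aphi$ is a linear subspace.

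I expect the main obstacle to be Step~1, namely obtaining genuine $\sigma$-strong convergence of a \emph{sequence}, since Egoroff's lemma needs a sequence whereas Lemma~\ref{lem:semifinite} only supplies a net. Working in the $\sigma$-finite corner $sMs$ with the faithful state $\omega$ is exactly what makes a monotone sequence with supremum $s$ available. A second point to watch is that the norm approximants must lie in $\mphi$; this is why we use $u_nqu_n$, which is dominated by $u_n$, rather than $qu_nq$, whose weight is not controlled.
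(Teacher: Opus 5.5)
Your proposal is correct and follows essentially the same route as the paper. Both pass to a $\sigma$-finite corner $sMs$ under $p$ with a faithful normal state, extract a sequence from the net of Lemma~\ref{lem:semifinite}(iii) so that $s(1-u_n)s\downarrow 0$, apply the one-sided Egoroff lemma with $\varepsilon=\tfrac12$, use $u_nqu_n\le u_n$ to place $q$ in $\Aphi$, and finish with the same Zorn argument. The differences are only cosmetic: you produce $s$ from a vector state, and you bound $\|(1-u_n)q\|$ via $(1-u_n)^{1/2}$ rather than via $(1-u_n)^2\le 1-u_n$.
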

\begin{proof}
By Lemma~\ref{lem:semifinite}, choose finite-weight positive
contractions $u_\lambda\uparrow1$. Fix $p\ne0$. Choose a nonzero
normal positive functional on $pMp$, and let $s\le p$ be its support.
After normalization it restricts to a faithful normal state $\omega$
on the $\sigma$-finite corner $sMs$.
Choose increasing indices $\lambda_n$ so that
\[
 z_n=s-su_{\lambda_n}s\ge0,\qquad \omega(z_n)\le2^{-n}.
\]
The decreasing sequence $(z_n)$ has a positive infimum $z$.
Normality gives $\omega(z)=0$, and faithfulness gives $z=0$.
Thus $z_n\to0$ strongly, and hence $\sigma$-strongly because the
sequence is norm bounded. By Lemma~\ref{lem:egoroff} in $sMs$,
with $\varepsilon=1/2$, there are a subsequence and a nonzero
projection $q\le s$ such that
\[
 \omega(s-q)<\tfrac12,
 \qquad \|z_{n_k}q\|\longrightarrow0.
\]
The projection is nonzero because $\omega(s)=1$.
For $u_k=u_{\lambda_{n_k}}$ we have
\begin{align*}
 \|(1-u_k)q\|^2
 &=\|q(1-u_k)^2q\|\\
 &\le\|q(1-u_k)q\|
   =\|qz_{n_k}q\|
   \le\|z_{n_k}q\|\longrightarrow0.
\end{align*}
Here we used $0\le1-u_k\le1$ and $q\le s$.
Moreover,
\[
 0\le u_kqu_k\le u_k^2\le u_k,
 \qquad \Phi(u_kqu_k)\le\Phi(u_k)<\infty,
\]
and
\[
 \|q-u_kqu_k\|
 \le\|(1-u_k)q\|+\|u_kq(1-u_k)\|
 \le2\|(1-u_k)q\|\longrightarrow0.
\]
Thus $q\in A_\Phi$.

For the decomposition, call a family \emph{admissible} if its
members are mutually orthogonal nonzero projections dominated by
$p$ and belonging to $A_\Phi$. Order these families by inclusion.
The union of a chain is admissible: each projection belongs to one
family in the chain, and any two belong to a common family.
Zorn's lemma gives a maximal admissible family $(q_i)_{i\in I}$.
Its supremum $q=\bigvee_iq_i$ is taken in $M$ (it does not have to be in $A_\Phi$). 
If $q<p$, the first assertion supplies a
nonzero projection in $A_\Phi$ below $p-q$, contradicting maximality.
Hence $q=p$.

If $M$ is $\sigma$-finite, the nonzero members of the family are
countable. Their finite partial sums give the required increasing
sequence; a finite family is handled by repeating its final sum.
\end{proof}

\section{Normality and normal minorants}\label{sec:normality}

We shall use the normal-minorant lemma of Lemma~6.11
: if $\rho\in M^*_+$ is dominated by a weight
$\Phi$ which is completely additive on orthogonal projections,
there is $\eta\in M_*^+$ with $\eta\le\rho$ and
\[
 \eta(a)=\rho(a)\qquad(a\in\pphi).
\]
Because the functionals are bounded, this agreement extends by
linearity to $\mphi$ and then by norm continuity to $A_\Phi$.
We use the supremum representation of lower semicontinuous weights;
no additivity of an arbitrary pointwise supremum of functionals is
assumed.

\begin{lemma}[The normal-minorant envelope]\label{lem:envelope}
Suppose $\Phi$ is norm lower semicontinuous and completely additive
on orthogonal projections. Define
\[
 \Psi(x)=\sup\{\eta(x):\eta\in M_*^+,\ \eta\le\Phi\}, \qquad  x\in M_+.
\]
Then $\Psi\le\Phi$, $\Psi=\Phi$ on $(A_\Phi)_+$, and
\begin{equation}\label{eq:envelope-monotone}
 \Psi(x)=\sup_\alpha\Psi(x_\alpha)
 \qquad 0\le x_\alpha\uparrow x.
\end{equation}
The map $\Psi$ is monotone and positively homogeneous. Additivity
of $\Psi$ is not needed for these assertions.
\end{lemma}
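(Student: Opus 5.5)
Three things have to be shown: that $\Psi$ lies below $\Phi$ and is monotone and homogeneous, that $\Psi=\Phi$ on $(A_\Phi)_+$, and that $\Psi$ commutes with increasing suprema. I would take them in that order.

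\emph{Elementary properties.} Each $\eta$ in the defining family satisfies $\eta\le\Phi$, so $\Psi\le\Phi$. Each $\eta$ is also monotone and positively homogeneous on $M_+$, and a pointwise supremum of such maps keeps both properties.

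\emph{Monotone property (\ref{eq:envelope-monotone}).} Let $0\le x_\alpha\uparrow x$. Monotonicity gives $\sup_\alpha\Psi(x_\alpha)\le\Psi(x)$. For the reverse inequality, each normal $\eta$ satisfies $\eta(x)=\sup_\alpha\eta(x_\alpha)\le\sup_\alpha\Psi(x_\alpha)$. Taking the supremum over $\eta$ then gives
\[
 \Psi(x)=\sup_\eta\sup_\alpha\eta(x_\alpha)=\sup_\alpha\sup_\eta\eta(x_\alpha)=\sup_\alpha\Psi(x_\alpha).
\]
This is just an interchange of two suprema, so additivity of $\Psi$ is never used.

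\emph{Agreement on $(A_\Phi)_+$.} Fix $a\in(A_\Phi)_+$. Since $\Psi\le\Phi$, it suffices to show $\Phi(a)\le\Psi(a)$.
\begin{enumerate}
\item Norm lower semicontinuity lets me invoke the bounded-minorant representation of lower semicontinuous weights (Pedersen--Takesaki \eqref{eq:norm-lsc-sum}, or \cite{Combes,KV}). It gives
\[
 \Phi(a)=\sup\{\rho(a):\rho\in M^{*}_+,\ \rho\le\Phi\}.
\]
Any partial sum of the representing family \eqref{eq:norm-lsc-sum} is a bounded positive functional dominated by $\Phi$, so this supremum is attained through such minorants.
\item Fix one such $\rho\le\Phi$. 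Complete additivity on projections lets me apply Lemma~\ref{lem:finite-domain-part}. This produces a normal $\eta\le\rho\le\Phi$ with $\eta=\rho$ on $\pphi$.
\item Both $\eta$ and $\rho$ are bounded and linear. Their agreement on $\pphi$ therefore extends by linearity to $\mphi=\operatorname{span}\pphi$, and by norm continuity to the closure $A_\Phi$. In particular $\eta(a)=\rho(a)$.
\item Hence $\rho(a)=\eta(a)\le\Psi(a)$. Taking the supremum over $\rho$ gives $\Phi(a)\le\Psi(a)$.
\end{enumerate}

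\emph{Main obstacle.} The delicate step is the first one, the supremum representation by bounded minorants. It must hold for elements of infinite weight, and it must need nothing beyond norm lower semicontinuity on $M$ viewed as a $C^*$-algebra. The paper has already recorded it as \eqref{eq:norm-lsc-sum}, so I would simply cite it. I would also remark that for $a\in A_\Phi$ with $\Phi(a)=\infty$, the minorants can still have values $\rho(a)$ that are unbounded. The equality then reads $\infty=\infty$ with no extra work, because Steps 2--4 do not use finiteness of $\Phi(a)$.
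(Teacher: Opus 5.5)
Your proof is correct, and its architecture matches the paper's. The elementary properties and the interchange of suprema are the same. In both arguments the agreement on $(A_\Phi)_+$ comes from feeding bounded minorants $\rho\le\Phi$ into Lemma~\ref{lem:finite-domain-part}, then extending the agreement from $\pphi$ to $A_\Phi$ by linearity and norm continuity.

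The one real difference is where the bounded minorants come from.
\begin{itemize}
\item You take them on all of $M$, as finite partial sums of the Pedersen--Takesaki decomposition \eqref{eq:norm-lsc-sum}. Domination by $\Phi$ on $M_+$ is then automatic, and Lemma~\ref{lem:finite-domain-part} applies at once. This is the same mechanism already used in Proposition~\ref{prop:sigma-finite-values}.
\item The paper applies Combes's theorem only to the restriction $\Phi|_{(A_\Phi)_+}$, which has norm-dense finite domain. This gives minorants $\rho\in(A_\Phi)^*_+$, which are extended to $M$ by the positive Hahn--Banach theorem. The paper then checks that each extension $\widehat\rho$ is still dominated by $\Phi$ on $M_+$. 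This is automatic where $\Phi=\infty$, and every finite-weight positive element lies in $A_\Phi$.
\end{itemize}

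Your route is shorter and skips the extension step. However, it relies on \eqref{eq:norm-lsc-sum} for a weight that is typically not norm-densely defined on $M$; even the trace on $B(H)$ is only densely defined on the compacts. The paper's route costs one Hahn--Banach step but needs the bounded-minorant representation only for a densely defined weight. That is why the paper stresses that $\Phi|_{(A_\Phi)_+}$ has norm-dense finite domain before citing \cite{Combes,KV}.
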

\begin{proof}
The inequality, monotonicity, and homogeneity are immediate.
By normality of the functionals in the defining family,
\[
 \Psi(x)=\sup_{\eta\le\Phi,\,\eta\in M_*^+}
                 \sup_\alpha\eta(x_\alpha)
        =\sup_\alpha\Psi(x_\alpha),
\]
proving \eqref{eq:envelope-monotone}.

Put $A=A_\Phi$. The restriction $\Phi|_{A_+}$ is norm lower
semicontinuous and has norm-dense finite domain. Combes's
bounded-minorant representation \cite{Combes}, in the form
\cite[Theorem~1.6]{KV}, gives
\begin{equation}\label{eq:combes}
 \Phi(a)=\sup\{\rho(a):\rho\in A^*_+,\ \rho\le\Phi|_{A_+}\}, 
 \qquad a\in A_+ .
\end{equation}  Extend any such $\rho$ to a bounded positive
functional $\widehat\rho$ on $M$ by the positive Hahn--Banach
extension theorem. This extension is dominated by $\Phi$ on all
of $M_+$: the assertion is automatic where $\Phi$ is infinite,
and every positive element of finite $\Phi$-weight belongs to $A$,
where the assertion is the original domination.

Apply  Lemma~6.11
to $\widehat\rho$. The resulting
$\eta\in M_*^+$ satisfies $\eta\le\widehat\rho\le\Phi$ and
agrees with $\widehat\rho$ on $\pphi$, hence on $A$.
Thus $\eta(a)=\rho(a)$ for $a\in A_+$.
Taking the supremum in \eqref{eq:combes} yields
$\Phi(a)\le\Psi(a)$ on $A_+$. The reverse inequality was already
established.
\end{proof}

\begin{theorem}[Projection additivity and norm lower semicontinuity]
\label{thm:normal-criterion}
A weight on an arbitrary von Neumann algebra is normal if and only
if it is norm lower semicontinuous and completely additive on
orthogonal projections.
\end{theorem}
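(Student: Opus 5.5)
The forward direction is routine. A normal weight is $\sigma$-normal, hence norm lower semicontinuous by Theorem~\ref{thm:sigma-norm-lsc}. It is completely additive on orthogonal projections because the finite partial sums of an orthogonal family increase to its sum. For the converse, assume $\Phi$ is norm lower semicontinuous and completely additive on projections. First I will reduce to the ordinarily semifinite case with Lemma~\ref{lem:finite-corner}. Let $e$ be the join of the finite-weight projections. Then $\Phi_e$ on $eMe$ is ordinarily semifinite, it inherits norm lower semicontinuity and complete projection additivity, and normality of $\Phi_e$ gives normality of $\Phi$ by part (ii) of that lemma. So from now on $\Phi$ is ordinarily semifinite.

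The plan is to prove $\Phi=\Psi$, where $\Psi$ is the normal-minorant envelope of Lemma~\ref{lem:envelope}. Since $\Psi$ is a supremum of normal functionals, it preserves increasing suprema by \eqref{eq:envelope-monotone}. So $\Phi=\Psi$ gives normality, and additivity of $\Psi$ comes for free. We already know $\Psi\le\Phi$ everywhere and $\Psi=\Phi$ on $(A_\Phi)_+$.

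First I will establish equality on projections. Fix $p\in\Proj(M)$. Lemma~\ref{lem:projection-rich} decomposes $p=\sum_i q_i$ with $q_i\in A_\Phi$. For each finite set $F$ of indices, $q_F:=\sum_{i\in F}q_i$ is a projection in $A_\Phi$ and $q_F\uparrow p$. Complete additivity on projections then gives
\[
\Phi(p)=\sup_F\Phi(q_F)=\sup_F\Psi(q_F)\le\Psi(p)\le\Phi(p).
\]
Here I use only the monotonicity of $\Psi$ and the equality $\Psi=\Phi$ on $A_\Phi$. This is exactly property $(F^n)$, since $\Psi(p)=\sup_{\omega\in\mathcal D_n(\Phi)}\omega(p)$.

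Next, norm lower semicontinuity gives (FS) by Corollary~\ref{cor:three-regularities}, using finite-spectrum approximants from below in $W^*(1,x)$. With both (FS) and $(F^n)$ in hand, Theorem~\ref{thm:normal-projection-detection} yields normality of $\Phi$. Equivalently, one can rerun the finite-spectrum argument of that theorem, which treats the finite and infinite weight cases separately and so extends $\Psi=\Phi$ from projections to all of $M_+$.

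The main obstacle is the projection step. A projection $q\in A_\Phi$ need not have finite weight, so the equality $\Psi(q)=\Phi(q)$ must come from Lemma~\ref{lem:envelope}, that is, from the Combes minorant representation on $A_\Phi$ together with Lemma~6.11. It cannot be read off directly from finite-weight elements. I will check carefully that the finite subsums $q_F$ lie in $A_\Phi$, which holds since $A_\Phi$ is a $C^*$-algebra, and that complete additivity is applied to the orthogonal family $(q_i)$ and not to a general increasing net.
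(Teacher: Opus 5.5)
Your proof is correct. Through the equality $\Phi=\Psi$ on projections it is the paper's argument: the corner reduction via Lemma~\ref{lem:finite-corner}, the envelope of Lemma~\ref{lem:envelope}, the decomposition of Lemma~\ref{lem:projection-rich}, and complete additivity.

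The routes diverge when extending $\Phi=\Psi$ from projections to all of $M_+$.

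\textbf{The paper's extension.} It argues directly. If $\Psi(x)<\infty$, the spectral projections $\mathbf 1_{[1/m,\infty)}(x)$ satisfy $\Phi(\mathbf 1_{[1/m,\infty)}(x))=\Psi(\mathbf 1_{[1/m,\infty)}(x))\le m\Psi(x)<\infty$. So $x\mathbf 1_{[1/m,\infty)}(x)$ has finite weight and lies within $1/m$ of $x$. Hence $x\in A_\Phi$, and the agreement of Lemma~\ref{lem:envelope} on $(A_\Phi)_+$ gives $\Phi(x)=\Psi(x)$. The case $\Psi(x)=\infty$ is trivial.

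\textbf{Your extension.} You identify the projection equality as $(F^n)$. You then use norm lower semicontinuity a second time to get $(\mathrm{FS})$ via Corollary~\ref{cor:three-regularities}. Finally you invoke the finite-spectrum argument of Theorem~\ref{thm:normal-projection-detection}, which handles the finite- and infinite-weight cases separately.

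\textbf{What each buys.} Your version shows the theorem is an instance of the earlier projection-detection criterion. Combined with the Appendix's results that $(H)$ gives norm lower semicontinuity and complete additivity, it exhibits directly the implication $(H)\Rightarrow(F^n)$ posed in Corollary~\ref{cor:haagerup-projection-detection}. The paper's version is self-contained within the Appendix and uses norm lower semicontinuity only once, through Combes's representation on $A_\Phi$.

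\textbf{Forward direction.} You obtain norm lower semicontinuity from Theorem~\ref{thm:sigma-norm-lsc} (closedness of the GNS map), whereas the paper cites Haagerup's weak$^*$ lower semicontinuity theorem. Either is fine.
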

\begin{proof}
Normality implies norm lower semicontinuity by
\cite[Theorem~1.8]{Haag}; complete projection additivity follows
by applying normality to finite partial sums.

Conversely, first suppose that $\Phi$ is ordinarily semifinite,
and let $\Psi$ be as in Lemma~\ref{lem:envelope}.
For a projection $p$, choose an orthogonal decomposition
$p=\sum_iq_i$ from Lemma~\ref{lem:projection-rich}.
Write $p_F=\sum_{i\in F}q_i$ for a finite subset $F$ of the index
set. Since $A_\Phi$ is a linear space, $p_F\in A_\Phi$.
Complete projection additivity and \eqref{eq:envelope-monotone}
give
\begin{equation}\label{eq:projection-agreement}
 \Phi(p)=\sup_F\Phi(p_F)
        =\sup_F\Psi(p_F)=\Psi(p).
\end{equation}

Now let $x\in M_+$. If $\Psi(x)=\infty$, then $\Phi(x)=\Psi(x)$
because $\Psi\le\Phi$. If $\Psi(x)<\infty$, let
$p_m=\Chi_{[1/m,\infty)}(x)$. Since $p_m\le mx$, monotonicity,
homogeneity, and \eqref{eq:projection-agreement} give
\[
 \Phi(p_m)=\Psi(p_m)\le m\Psi(x)<\infty.
\]
Hence $xp_m$ has finite weight, because
$0\le xp_m\le\|x\|p_m$. Moreover $\|x-xp_m\|\le1/m$, so
$x\in A_\Phi$. The agreement in Lemma~\ref{lem:envelope} gives
$\Phi(x)=\Psi(x)$ in this case too. Thus $\Phi=\Psi$ on $M_+$,
and \eqref{eq:envelope-monotone} proves normality.  

For an arbitrary weight, pass to the finite-weight corner of
Lemma~\ref{lem:finite-corner}. Both norm lower semicontinuity and
complete projection additivity pass to that corner. Its restriction
is ordinarily semifinite, so the preceding argument makes it normal.
Lemma~\ref{lem:finite-corner}(ii) then gives normality on $M$.
\end{proof}

\begin{proof}[Proof of Theorem~\ref{thm:main2}(i)]
The forward implication follows by restriction. If $(H)$ holds,
then $(H_\sigma)$ holds, so Corollary~\ref{cor:gns-hsigma-lsc}
gives norm lower semicontinuity. Also $(H)$ gives complete
additivity on orthogonal projections by restricting to the unital
abelian von Neumann algebra generated by the family.
Apply Theorem~\ref{thm:normal-criterion}.
\end{proof}

\begin{corollary}[The $\sigma$-finite case]\label{cor:sigma-finite}
Let $M$ be $\sigma$-finite and $\Phi$ any weight satisfying
$(H_\sigma)$. Then $\Phi$ is normal.
\end{corollary}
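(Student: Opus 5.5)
The route is Theorem~\ref{thm:normal-criterion}: a weight that is norm lower semicontinuous and completely additive on orthogonal projections is normal. Corollary~\ref{cor:gns-hsigma-lsc} already gives norm lower semicontinuity from $(H_\sigma)$, so only complete additivity on projections remains. This is the one place where $\sigma$-finiteness of $M$ enters.

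First, $(H_\sigma)$ gives countable additivity on projections. Given mutually orthogonal projections $(p_n)$, the unital von Neumann algebra $B=W^*(1,p_1,p_2,\ldots)$ is abelian. The partial sums $\sum_{n\le N}p_n$ increase to $\sum_np_n$ inside $B$, so $\sigma$-normality of $\Phi|_{B_+}$ and finite additivity give $\Phi(\sum_np_n)=\sum_n\Phi(p_n)$.

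Next, pass to arbitrary orthogonal families $(p_i)_{i\in I}$. Since $M$ is $\sigma$-finite, only countably many $p_i$ are nonzero. Zero terms contribute nothing to $\sum_i p_i$ or to $\sum_i\Phi(p_i)$, since $\Phi(0)=0$. Hence complete additivity reduces to the countable case just proved. (The same reduction appears in the $\sigma$-finite half of Corollary~\ref{cofi}.)

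Finally, apply Theorem~\ref{thm:normal-criterion}. Its proof already handles the absence of semifiniteness by passing to the finite-weight corner via Lemma~\ref{lem:finite-corner}. So no semifiniteness hypothesis is needed, and $\Phi$ is normal.

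The only point needing care is that Corollary~\ref{cor:gns-hsigma-lsc} is stated for arbitrary weights under $(H_\sigma)$, with no semifiniteness or faithfulness assumed. This holds because Theorem~\ref{thm:gns-hsigma-closed} is proved for arbitrary weights. All the substantial work lies in those earlier results, so there is no genuine obstacle here. Alternatively, one could cite Theorem~\ref{thm:main2}(i) directly after establishing $(H)$. However, checking $(H)$ means proving normality on abelian subalgebras, which uses the same $\sigma$-finite countability argument applied inside each abelian subalgebra together with Corollary~\ref{isno}. The route through Theorem~\ref{thm:normal-criterion} is more direct.
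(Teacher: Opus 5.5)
Your proposal is correct and is essentially the paper's proof: norm lower semicontinuity from Corollary~\ref{cor:gns-hsigma-lsc}, countable projection additivity from $(H_\sigma)$, upgraded to complete additivity because orthogonal families in a $\sigma$-finite algebra are countable, and then Theorem~\ref{thm:normal-criterion}. Your extra details (the abelian algebra $W^*(1,p_1,p_2,\ldots)$, and the absence of a semifiniteness hypothesis thanks to the finite-weight corner) are accurate but do not change the argument.
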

\begin{proof}
Corollary~\ref{cor:gns-hsigma-lsc} gives norm lower semicontinuity.
Condition $(H_\sigma)$ gives countable projection additivity.
Every orthogonal family of nonzero projections in $M$ is countable,
so this is complete projection additivity. Apply
Theorem~\ref{thm:normal-criterion}.
\end{proof}

\section{Consequences and scope}\label{sec:consequences}

The following is the approximation form of the 
criteria we have obtained. We say  that $\Phi$ has \emph{spectral finite-spectrum recovery}
if, for every $x\in M_+$,
\begin{equation}\tag{$\mathrm{FS}_{\mathrm c}$}\label{eq:FS-c}
 \Phi(x)=\sup\{\Phi(b):0\le b\le x,\ b\in W^*(1,x),
                          \ b\text{ has finite spectrum}\}.
\end{equation} 

\begin{corollary}[Normal and sequential criteria]\label{cor:criteria}
For a weight $\Phi$ on any von Neumann algebra, the following
conditions are equivalent:
\begin{enumerate}[label=\textup{(\roman*)}]
\item $\Phi$ is normal.
\item $\Phi$ satisfies $(H)$.
\item $\Phi$ is completely additive on orthogonal projections and
satisfies \eqref{eq:FS-c}.
\item $\Phi$ is completely additive on orthogonal projections and
is norm lower semicontinuous.
\end{enumerate}
The corresponding four conditions with \emph{$\sigma$-normal},
\emph{$(H_\sigma)$}, and \emph{countably additive} in place of
\emph{normal}, \emph{$(H)$}, and \emph{completely additive} are
also equivalent. \end{corollary}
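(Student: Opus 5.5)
We prove the normal equivalences by the cycle (i)$\Rightarrow$(ii)$\Rightarrow$(iii)$\Rightarrow$(i), and treat (iv) separately. The implication (i)$\Rightarrow$(ii) is restriction: a normal weight stays normal on every unital abelian von Neumann subalgebra. For (ii)$\Rightarrow$(iii) we use Proposition~\ref{prop:H-projection-spectral}. In its forward direction, $(H)$ gives complete additivity on projections, by restricting to $W^*(1,\{p_i\})$. It also gives (FS$_{\rm c}$): normality on $W^*(1,x)$ applied to the dyadic simple functions $s_n\uparrow x$ yields $\Phi(s_n)\uparrow\Phi(x)$. The reverse direction of Proposition~\ref{prop:H-projection-spectral} gives (iii)$\Rightarrow$(ii). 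Finally, (ii)$\Rightarrow$(i) is Theorem~\ref{thm:main2}(i), proved in Section~\ref{sec:normality}. So (i), (ii) and (iii) are equivalent.

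For (iv), Theorem~\ref{thm:normal-criterion} states exactly that (iv)$\Leftrightarrow$(i). Alternatively, (i)$\Rightarrow$(iv) follows since normality implies $\sigma$-normality, and Theorem~\ref{thm:sigma-norm-lsc} then gives norm lower semicontinuity; complete additivity on projections follows by applying normality to finite partial sums.

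For the sequential version we need the same cycle. $\sigma$-normality implies $(H_\sigma)$ by restriction. $(H_\sigma)$ is equivalent to countable additivity plus (FS$_{\rm c}$) by the first line of Proposition~\ref{prop:H-projection-spectral}. Countable additivity plus norm lower semicontinuity implies (FS$_{\rm c}$) by Corollary~\ref{cor:three-regularities}. Conversely, $\sigma$-normality implies countable additivity, and it implies norm lower semicontinuity by Theorem~\ref{thm:sigma-norm-lsc}. Also $(H_\sigma)$ implies norm lower semicontinuity by Corollary~\ref{cor:gns-hsigma-lsc}. Thus (ii$_\sigma$), (iii$_\sigma$) and (iv$_\sigma$) are mutually equivalent and all follow from (i$_\sigma$).

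The main obstacle is closing the loop back to $\sigma$-normality: showing that (iv$_\sigma$) implies $\sigma$-normality for an arbitrary weight, with no hypothesis (R). This is the sequential analogue of Theorem~\ref{thm:normal-criterion}. My plan is to repeat the argument of Section~\ref{sec:normality} with sequences in place of nets:
\begin{itemize}
\item First reduce to the ordinarily semifinite case via Lemma~\ref{lem:finite-corner}, whose part (ii) covers the $\sigma$-normal case.
\item Then use the $\sigma$-normal minorants of Lemma~\ref{lem:finite-domain-part}, which need only countable additivity, to build an envelope $\Psi$ that preserves increasing sequential suprema.
\item Show $\Psi=\Phi$ on $(A_\Phi)_+$ using the Combes representation, exactly as in Lemma~\ref{lem:envelope}.
\item Finally show $\Phi(p)=\Psi(p)$ for every projection $p$.
\end{itemize}

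The difficulty lies in the last step. Lemma~\ref{lem:projection-rich} decomposes $p$ as $\sum_i q_i$ with $q_i\in A_\Phi$, but the family $(q_i)$ may be uncountable. Countable additivity then does not give $\Phi(p)=\sup_F\Phi(p_F)$ over finite subfamilies $F$, and a sequential envelope does not pass to net suprema. I would first attempt a countable reduction. Given $0\le x_n\uparrow x$, replace $M$ by $W^*(1,x,x_1,x_2,\dots)$ and use Lemma~\ref{lem:countable-reduction} together with the quotient construction of Lemma~\ref{lem:quotient}; this is the route the paper signals. If that fails, the sequential case of the corollary would be established only in the $\sigma$-finite setting of Corollary~\ref{cor:sigma-finite}, with the general statement deferred as the paper indicates.
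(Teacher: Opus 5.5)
Your normal half is correct and is essentially the paper's argument. The paper routes (iv) through (iii), using Corollary~\ref{cor:gns-hsigma-lsc} and norm lower semicontinuity on finite-spectrum approximants, while you cite Theorem~\ref{thm:normal-criterion} directly; both work. In the sequential half you correctly show that (ii$_\sigma$), (iii$_\sigma$) and (iv$_\sigma$) are equivalent and follow from (i$_\sigma$). However, the implication back to $\sigma$-normality is never proved, so the sequential statement is not established. The paper's own proof closes this direction only by citing Theorem~\ref{thm:main2} ``in both formulations''. Part (ii) of that theorem is explicitly deferred to a later paper. Only special cases are proved here: a $\sigma$-finite algebra or a faithful weight (Corollaries~\ref{cor:sigma-finite} and~\ref{cor:faithful-normal}), and weights with (R) (Theorem~\ref{sv}).

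More importantly, the route you sketch cannot close the loop, because its last step is false, not merely hard. Put $\Psi_\sigma(x)=\sup\{\omega(x):\omega\in S_+(M),\ \omega\le\Phi\}$. Take the paper's closing example with $I=\omega_1$: $\Phi(f)=0$ if $f$ has countable support, and $\Phi(f)=\infty$ otherwise.
\begin{itemize}
\item This weight is $\sigma$-normal, so it satisfies (iv$_\sigma$).
\item It is ordinarily semifinite, so your corner reduction leaves it unchanged.
\item $A_\Phi$ consists of the countably supported functions, where $\Phi=\Psi_\sigma=0$, so your third step holds.
\end{itemize}
But for any $\omega\in S_+(M)$ with $\omega\le\Phi$, the map $S\mapsto\omega(1_S)$ is a finite countably additive measure on $\mathcal P(\omega_1)$ vanishing on singletons. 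By Ulam's theorem it is zero. Thus $\Psi_\sigma\equiv0$ while $\Phi(1)=\infty$. So $\Phi(p)=\Psi_\sigma(p)$ fails at $p=1=\sum_t e_t$, exactly through the uncountable decomposition you flagged.

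The countable reduction does not rescue this. $\ell^\infty(\omega_1)$ is generated by a single injective function $\omega_1\to[0,1]$, so $W^*(1,x,x_1,x_2,\dots)$ can already be the whole algebra. A correct argument must exploit the fixed sequence, for instance by showing $\Phi(x)\le L$ whenever $L=\sup_n\Phi(x_n)<\infty$. It cannot go through a global identity $\Phi=\Psi_\sigma$. That sequence-specific ingredient is what your proposal is missing.
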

\begin{proof}
The equivalence of (i) and (ii), in both formulations, is
Theorem~\ref{thm:main2}. The equivalence of (ii) and (iii) is the
abelian criterion in Proposition~6.4.

If (ii) holds, Corollary~\ref{cor:gns-hsigma-lsc} gives norm lower
semicontinuity, and the relevant projection additivity follows by
restriction; hence (iv) holds. Conversely, take positive
finite-spectrum spectral approximants $b_n\le x$ with
$\|b_n-x\|\to0$. Norm lower semicontinuity and monotonicity give
\[
 \Phi(x)\le\liminf_n\Phi(b_n)\le\Phi(x),
\]
so (iv) implies \eqref{eq:FS-c} and hence (iii).
\end{proof}

\begin{corollary}\label{cor:faithful-normal}
If $\Phi$ satisfies $(H_\sigma)$ and either $M$ is $\sigma$-finite
or $\Phi$ is faithful, then $\Phi$ is normal.
\end{corollary}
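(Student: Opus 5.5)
The plan is to reduce both cases to Corollary~\ref{cor:sigma-finite} and Theorem~\ref{thm:normal-criterion}. Condition $(H_\sigma)$ already supplies the norm-continuity ingredient: Corollary~\ref{cor:gns-hsigma-lsc} gives norm lower semicontinuity of $\Phi$ on all of $M_+$. The same condition gives countable additivity on orthogonal projections, by applying $\sigma$-normality on the unital abelian von Neumann algebra $W^*(1,p_1,p_2,\ldots)$ generated by a countable orthogonal family. By Theorem~\ref{thm:normal-criterion}, it then suffices to upgrade countable additivity to complete additivity on orthogonal projections.

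If $M$ is $\sigma$-finite, this upgrade is immediate. Every orthogonal family of nonzero projections is countable, and zero projections contribute nothing to either side. This is exactly Corollary~\ref{cor:sigma-finite}, which I would simply invoke.

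If $\Phi$ is faithful, let $(p_i)_{i\in I}$ be an orthogonal family with $p=\sum_i p_i$. The inequality $\Phi(p)\ge\sum_i\Phi(p_i)$ is automatic: monotonicity and finite additivity give $\Phi(p)\ge\Phi(p_F)=\sum_{i\in F}\Phi(p_i)$ for each finite $F$. In particular, if $\sum_i\Phi(p_i)=\infty$ we are done. If the sum is finite, faithfulness gives $\Phi(p_i)>0$ for each nonzero $p_i$. For each $m$, only finitely many $i$ satisfy $\Phi(p_i)\ge 1/m$, so only countably many $p_i$ are nonzero. Discarding the zero projections leaves $p$ unchanged, so countable additivity yields $\Phi(p)=\sum_i\Phi(p_i)$. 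This is the same counting argument as in the proof of Corollary~\ref{cofi}, but now without needing (P), because Theorem~\ref{thm:normal-criterion} replaces Theorem~\ref{affH}.

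With complete additivity and norm lower semicontinuity both in hand, Theorem~\ref{thm:normal-criterion} gives normality. I expect no real obstacle here, since the substantive work is in Corollary~\ref{cor:gns-hsigma-lsc}. The only point to check is that the faithful case does not silently require semifiniteness. It does not, because Theorem~\ref{thm:normal-criterion} handles arbitrary weights via the finite-weight corner of Lemma~\ref{lem:finite-corner}.
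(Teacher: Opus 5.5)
Your proposal is correct and follows essentially the same route as the paper. The $\sigma$-finite case is Corollary~\ref{cor:sigma-finite}, and in the faithful case the paper upgrades countable to complete projection additivity by the same counting argument, then combines Corollary~\ref{cor:gns-hsigma-lsc} with Theorem~\ref{thm:normal-criterion}.
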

\begin{proof}
The first case is Corollary~\ref{cor:sigma-finite}.
In the faithful case, $(H_\sigma)$ gives countable projection
additivity. For an orthogonal family $(p_i)$, an infinite sum
$\sum_i\Phi(p_i)$ gives $\Phi(\sum_ip_i)=\infty$ by monotonicity.
If the sum is finite, faithfulness implies that only countably many
$p_i$ are nonzero, and countable additivity applies. Thus $\Phi$
is completely additive on projections. Corollary~\ref{cor:gns-hsigma-lsc}
and Theorem~\ref{thm:normal-criterion} prove normality.
\end{proof}

{\bf Example.} The sequential conclusion cannot be replaced by normality in general.
Let $I$ be uncountable and define on $\ell^\infty(I)_+$
\[
 \Phi(f)=
 \begin{cases}
 0,&\{t:f(t)\ne0\}\text{ is countable},\\
 \infty,&\text{otherwise}.
 \end{cases}
\]
This is a weight. If $0\le f_n\uparrow f$, then
$\operatorname{supp}f=\bigcup_n\operatorname{supp}f_n$, which proves
$\sigma$-normality. But the coordinate projections all have weight
zero, whereas their supremum $1$ has infinite weight. Thus $\Phi$
is not normal. This example is also noted after Theorem~6.22.

{\bf Acknowledgements concerning the Appendix only:} DPB  thanks L. E. Labuschagne and Brent Nelson for helpful conversations. 
ChatGPT Astra 6.0 generated the main proof ideas after we suggested the theorems and provided references and suggestions for these.  This was done in an iterative process, starting with the $\sigma$-finite case.    It   was also used for mathematical discussion, proof analysis,
literature searches, exposition, and LaTeX preparation during the
development of this manuscript. The human author assume full responsibility for the correctness of all mathematical statements, and for the integrity and accuracy of all citations.

\bigskip

{\bf Acknowledgements for Sections 1--:8} The problem of whether a state on a von Neumann algebra which was countably additive on projections
is sequentially weak* continuous, was explicitly 
raised in \cite{BlecherWeaver}, and advertised by the first author at Oberwolfach in \cite[p.~1372]{B18}. 
We have worked on and off on this problem over the last 8  years or so during collaborative visits, developing many of the techniques needed, 
 and solving certain partial cases.   In 2026 we suggested an approach and certain of the above techniques  to GPT-5.6 Sol 
 to solve this and a  related and deeper problem.  It was able to do this using the strategy and some of the  techniques which we provided.  
 More details will be given in the companion
 paper \cite{BqIII}.   Then using some of these new ideas and our older techniques,  
 through an iterative process supervised and reviewed by the authors, with the first author's GPT Astra subscription
 we found in September  2026 the present
  simpler solution to the sequentially weak* continuous state problem.  After this  we analyzed the weight case through a similar
 AI-aided iterative process.   We suggested most of the theorems, and provided references and suggestions for these.
 Some of the results here had no input from AI besides cosmetic suggestions
 (we mention for example certain special cases of our main theorem, Corollary \ref{cor} and part of \ref{cor2}, Theorem \ref{thm:mainc} and Corollary \ref{cor:measure-algebra-sequential-continuity}, \ref{lem:sigma}, 
the Gleason corollary \ref{cor:gleason-sequential},  parts of Proposition \ref{prop:abelian-integral-criterion}, Theorem \ref{thm:three-strict-decomposition}, and a 
variant of Corollary \ref{cofi}).
 Indeed some of the latter results  have been known to us for many years, but we have often used shorter proofs suggested by 
 the AI. Chat-GPT  also assisted with exposition and typesetting. The human authors assume full responsibility for the correctness of all mathematical statements, and for the integrity and accuracy of all citations.   D.P. Blecher acknowledges support from NSF Grant DMS-2154903, and also from CoE-MaSS Ref No. 2025-048-OPA-Quantum Measures, and the School of Mathematical and Statistical Sciences at NWU Potchefstroom, which helped make international visits possible.  The second author acknowledges discussions with Staszek Goldstein that were helpful to him.

\end{document}